\definecolor{mygreen}{rgb}{0.01,0.6,0.2}
\definecolor{myblue}{rgb}{0.01, 0.18, 1.0}
\newtheorem{definition}{Definition}[section]
\newtheorem{theorem}[definition]{Theorem}
\newtheorem{proposition}[definition]{Proposition}
\newtheorem{Lemma}[definition]{Lemma}
\theoremstyle{definition}
\newtheorem{remark}[definition]{Remark}
\newtheorem{example}[definition]{Example}
\numberwithin{equation}{section}
\DeclarePairedDelimiter\abs{\lvert}{\rvert}%
\DeclarePairedDelimiter\norm{\lVert}{\rVert}%
\let\oldnorm\norm
\def\norm{\@ifstar{\oldnorm}{\oldnorm*}}
\newcommand{\al} {\alpha}
\newcommand{\pa} {\partial}
\newcommand{\de} {\delta}
\newcommand{\ga} {\gamma}
\newcommand{\Om} {\Omega}
\newcommand{\la} {\lambda}
\newcommand{\Gr} {\nabla}
\newcommand{\no} {\nonumber}
\newcommand{\noi} {\noindent}
\newcommand{\ep} {\epsilon}
\newcommand{\ra} {\rightarrow}
\newcommand\restr[2]{{% we make the whole thing an ordinary symbol

\left.\kern-\nulldelimiterspace % automatically resize the bar with \right
#1 % the function
% \vphantom{\big|} % pretend it's a little taller at normal size
\right|_{#2} % this is the delimiter
}}
\def\wp{{W^{1,p}(\Om)}}
\def\wN{{W^{1,N}(\Om)}}
\def\C{{\mathcal C}}
\def\S{{\mathcal S}}
\def\R{{\mathbb R}}
\def\RN{{\mathbb R}^N}
\def\N{{\mathbb N}}
\def\F{{\mathcal F}}
\def\G{{\mathcal G}}
\def\({{\Big(}}
\def\){{\Big)}}
\def\c{{\C^1}}
\def\dis{{\displaystyle \int_{\Omega}}}
\def\p{{p^{\prime}}}
\def\dt{{\rm d}t}
\def\dx{{\rm d}x}
\def\dy{{\rm d}y}
\def\dJ{{\rm d}J}
\def\dsg{{\rm d}\sigma}
\def\pa{{\partial}}
\def\dispa{{\displaystyle \int_{\partial \Omega}}}
\def\hm{{\mathcal{H}}^{N-1}}
\def\pcr{{\frac{p(N-1)}{N-p}}}
\def\lfrac#1#2{{\scriptstyle\frac{#1}{#2}}}
\def\llfrac#1#2{{\scriptscriptstyle\frac{#1}{#2}}}
\DeclareMathOperator*\lowlim{\underline{lim}}
\DeclareMathOperator*\uplim{\overline{lim}}
\author{T. V. Anoop\footnote{corresponding author and also supported by the INSPIRE Research Grant DST/INSPIRE/04/2014/001865.}\;,\;
Nirjan Biswas}
\title{On global bifurcation for the nonlinear Steklov problems}
\date{}
\begin{document}
\maketitle

\begin{abstract}
For $p \in (1, \infty),$ for an integer $N \geq 2$ and for a bounded  Lipschitz  domain $\Om$, we consider the following nonlinear Steklov bifurcation problem 
\begin{align*}
-\Delta_p \phi  = 0 \ \text{in}\ \Om, \quad \abs{\Gr \phi}^{p-2} \frac{\partial \phi}{\partial \nu} = \la \left( g \abs{\phi}^{p-2}\phi + f r(\phi) \right) \ \text{on} \ \pa \Om,
\end{align*}
where $\Delta_p$ is the $p$-Laplace operator, $g,f \in L^1(\pa \Om)$ are indefinite weight functions and $r \in C(\R)$ satisfies $r(0)=0$ and certain growth conditions near zero and at infinity.  For $f,g$ in some appropriate Lorentz-Zygmund spaces, we establish the existence of a continuum that bifurcates from $(\la_1,0)$, where $\la_1$ is the first eigenvalue of the following   nonlinear Steklov eigenvalue problem
\begin{align*}\label{Steklov abstract}
-\Delta_p \phi  = 0 \ \text{in}\ \Om, \quad \displaystyle \abs{\Gr \phi}^{p-2} \frac{\pa \phi}{\pa \nu} = \la g \abs{\phi}^{p-2}\phi  \ \text{on} \ \pa \Om.
\end{align*}
\end{abstract}

\noindent \textbf{Mathematics Subject Classification (2020)}: 35B32, 46E30, 35J50, 35J66.\\
\textbf{Keywords:} {bifurcation, Steklov eigenvalue problem,  weighted trace inequalities, Lorentz and Lorentz-Zygmund spaces}.

\section{Introduction}
Let $\Om$ be a bounded  Lipschitz domain in $\R^N$ $(N\ge 2)$ with the boundary  $\pa \Om$. For $p \in (1, \infty),$  we consider the following nonlinear Steklov bifurcation problem:  
\begin{equation}\label{Steklov pertub}
\begin{aligned}
         -\Delta_p \phi & = 0 \ \text{in}\ \Om,\\
           \displaystyle \abs{\Gr \phi}^{p-2}\frac{\pa \phi}{\pa \nu} &= \la \left( g |\phi|^{p-2}\phi + f r(\phi) \right)  \ \text{on} \ \pa \Om, 
\end{aligned}
\end{equation}
where $\Delta_p$ is the $p$-Laplace operator defined as $\Delta_p(\phi) = \text{div}(\abs{\Gr \phi}^{p-2} \Gr \phi),$ $f,g \in L^1(\pa \Om)$ are indefinite weights functions and  $r \in C(\R)$ satisfying $r(0) = 0$. A function $\phi \in \wp$ is said to be a solution of \eqref{Steklov pertub}, if
\begin{align}\label{weak pertub}
          \dis |\Gr \phi|^{p-2} \Gr \phi \cdot \Gr v\; \dx = \la \dispa \left( g \abs{\phi}^{p-2} \phi v + f r(\phi)v \right) \; \dsg, \quad \forall v \in \wp.
\end{align}
Since $r(0) = 0$, the set $\{ (\la,0): \la \in \R \}$ is always a trivial branch of solutions of \eqref{Steklov pertub}. We say a real number $\la$ is a bifurcation point of \eqref{Steklov pertub}, if there exists a sequence $\{(\la_n , \phi_n)\}$ of nontrivial weak solutions of \eqref{Steklov pertub} such that $\la_n \rightarrow \la$ and $\phi_n \rightarrow 0$ in $\wp$ as $n \rightarrow \infty.$

The bifurcation problem arises in numerous contexts in mathematical and engineering applications. For example, in reaction diffusion \cite{JA}, elasticity theory \cite{BRK, TW}, population genetics \cite{BT}, water wave theory \cite{Levi}, stability problems in engineering  \cite{Troger, Troger1}. Many authors considered the following nonlinear bifurcation problem with different boundary conditions
\begin{align}\label{pert 1}
          -\Delta_p \phi  =  \la g \abs{\phi}^{p-2} \phi + h(\la, x, \phi) \; \text{in} \; \Om,
  \end{align}
 where $h$ is assumed to be a Carath\'{e}odory function satisfying $h(\la, x, 0) = 0$. There are various sufficient conditions available on $g$ for the existence of a bifurcation point of \eqref{pert 1}. For Dirichlet boundary condition, $g = 1$  \cite {Drabek2, Girg, DelPino}, $g \in L^r(\Om)$  with $r > \frac{N}{2}$  \cite{AGJ}, $g \in L^{\infty}(\RN)$ \cite{Drabek-Huang}. There are few works that deal with $h$ of the form $\la f(x) r(\phi)$ with  continuous $r$ satisfying $r(0) = 0$ and certain growth condition at zero and at infinity, see  for $g, f$ in H\"{o}lder continuous spaces \cite{ Rumbos}, in certain Lebesgue spaces \cite{GLR}, in Lorentz spaces \cite{AMM, Lucia}. The bifurcation problem \eqref{pert 1} with Neumann boundary condition is considered for $g = 1$ in \cite{Drabek2}, for smooth $f, g$ in \cite{Brown}.

For $p=2$,  \eqref{Steklov pertub}  is considered in  \cite{Cushing, Cushing1, Stuart} for $g=1$ and  continuous $f,$ and in \cite{Pagani}  for $f, g \in L^{\infty}(\pa \Om)$. Indeed, there are many singular weights (not belonging to any of the Lebesgue spaces) that appear in problems in quantum mechanics, molecular physics, see \cite{FMT, Ferreira, Frank}. In this article, we enlarge the class of weight functions beyond the classical Lebesgue spaces. More precisely, we consider  $f,g$ in certain Lorentz-Zygmund spaces, and study the existence of bifurcation point for \eqref{Steklov pertub}.

Using the weak formulation, it is easy to see that \eqref{pert 1} is equivalent to the following 
 operator equation:
\begin{align}\label{operator eqn}
    A(\phi) =  \la G(\phi) + H(\la, \phi), \quad \phi\in X,
\end{align}
where $X$ is the Banach space $W^{1,p}(\Om)$ or $W^{1,p}_0(\Om)$ depending on the boundary conditions,  $A, G, H(\la,.): X\ra X'$ defined as $\left< A(\phi), v \right> = \int_{\Om} \abs{\Gr \phi}^{p-2} \Gr \phi \cdot \Gr v \, \dx;$ $\left< G(\phi), v \right> = \int_{\Om} g \abs{\phi}^{p-2} \phi v \, \dx;\,$ $  \left< H(\la, \phi), v \right> = \int_{\Om} h(\la, x, \phi) v \, \dx.$ For $p = 2$, $A$ is an invertible map. Using the Leray-Schauder degree \cite{Leray}, Krasnosel’skii in \cite{Krasnosel} gave sufficient conditions on $L=A^{-1}G,K=A^{-1}H$ so that, for any eigenvalue $\mu=\la^{-1}$ of $L$ with odd  multiplicity,  $(\la, 0)$ is a bifurcation point of \eqref{operator eqn}. Later, Rabinowitz \cite[Theorem 1.3]{Rabinowitz}, extended this result by  exhibiting  a continuum of nontrivial solutions of \eqref{operator eqn} bifurcating from  $(\la, 0)$ which is either unbounded in $\R\times X$ or meets at $(\la*, 0)$, where $\mu={\la*}^{-1}$ is an eigenvalue of $L.$ Further, if $\mu$ has multiplicity one, then  this continuum decompose into two subcontinua of nontrivial solutions of \eqref{operator eqn}, see  \cite{Ambrosetti, Dancer, Dancer1, Rabinowitz, Rabinowitz1}.  For $p \neq 2$,  the  Leray-Schauder degree is extended for certain maps between $X$ to $X'$ \cite{ Browder, Skrypnik} and then an analogue of  Rabinowitz result is proved  for the first eigenvalue of $A=\la G$, see \cite{Drabek2, Drabek-Huang, Girg, DelPino}.

To study the bifurcation problem  \eqref{Steklov pertub}, we consider the following  nonlinear eigenvalue problem:
\begin{equation}\label{Steklov weight}
\begin{aligned}
      -\Delta_p \phi & = 0 \ \text{in}\ \Om,\\
      \displaystyle \abs{\Gr \phi}^{p-2}\frac{\pa \phi}{\pa \nu} &= \la g |\phi|^{p-2}\phi  \ \text{on} \ \pa \Om. 
\end{aligned}
\end{equation}
 For $N=2$, $p=2$ and $g = 1,$ the problem \eqref{Steklov weight} is first considered by Steklov in \cite{Stekloff}. A real number $\la $ is said to be an eigenvalue of \eqref{Steklov weight}, if there exists $\phi \in \wp \setminus \{ 0 \}$ satisfying the following weak formulation 
\begin{align}\label{Sw weak}
\dis |\Gr \phi|^{p-2} \Gr \phi \cdot \Gr v\; \dx = \la \dispa  g \abs{\phi}^{p-2}\phi v \; \dsg, \quad \forall v \in \wp.
\end{align} 
For $N>p$, the classical trace embeddings (\cite[Theorem 4.2  and Theorem 6.2]{Nevcas}) gives
$$\wp \hookrightarrow L^{q}(\pa \Om), \text{ where }  q \in \left[1, \pcr \right],$$ and for  $q<\pcr$ the above embedding is compact. Thus, by the H\"{o}lder inequality  the right hand side of \eqref{Sw weak} is finite for $g\in L^r(\pa\Om)$ with $r\in \left[\frac{N-1}{p-1},\infty\right]$ and for any $\phi, v \in \wp.$ We say an eigenvalue $\la$ is principal, if there exists an eigenfunction  of \eqref{Steklov weight} corresponding to $\la$ that does not change it's sign in $\overline{\Om}.$ Notice that, zero is always a principal eigenvalue of \eqref{Steklov weight} and  if $\int_{\pa \Om} g \geq 0$, then zero is the only principal eigenvalue. Thus for the existence of a positive principal eigenvalue of \eqref{Steklov weight}, it is necessary to have a $g$ satisfying  $\int_{\pa \Om} g < 0$ and the $(N-1)$-dimensional Hausdorff measure of $\text{supp}(g^+)$ is nonzero. In \cite{Torne}, for $g\in L^r(\pa\Om)$ with $r\in \left(\frac{N-1}{p-1},\infty\right]$ satisfying the above necessary conditions, with the help of the above compact embedding, the authors proved the existence of a positive principal eigenvalue of \eqref{Steklov weight}. For $N=p$, $\wp$ is embedded compactly in $L^{q}(\pa \Om)$ for  $q \in [1, \infty).$ Thus  for $g \in L^r(\pa \Om)$ with $r\in (1,\infty]$ satisfying the above necessary condition, \eqref{Steklov weight} admits a positive principal eigenvalue, as obtained in \cite{Torne}.

In order to enlarge the class of weight functions beyond $L^r$, we use the trace embeddings due to Cianchi-Kerman-Pick. In \cite{CianchiPick}, the authors  improved the classical trace embeddings by providing finer trace embeddings as below:
\begin{align*}
   & (i) \; \text{For} \; N > p: \quad \wp \hookrightarrow L^{\frac{p(N-1)}{N-p},p}(\pa \Om)\subsetneq L^{\frac{p(N-1)}{N-p}}(\pa\Om). \\
   & (ii) \; \text{For} \; N = p: \quad \wp \hookrightarrow L^{\infty,N;-1}(\pa \Om)\subsetneq L^{q}(\pa \Om), \;\; \forall\, q \in [1, \infty).
\end{align*}
Nevertheless, none of these embeddings are compact. In this article, we use the above trace embeddings and  prove the existence of a positive principal eigenvalue of \eqref{Steklov weight} for weight functions  in certain Lorentz-Zygmund spaces. More precisely, for $1 \leq d < \infty,$ we consider the following closed subspaces: 
\begin{equation*}
\begin{aligned}
      &\F_{d}  := \text{closure of} \; \c(\pa \Om) \; \text{in the Lorentz space} \; L^{d,\infty}(\pa \Om),\\
      &\G_{d}  := \text{closure of} \; \c(\pa \Om) \; \text{in the Lorentz-Zygmund space} \; L^{d, \infty;N}(\pa \Om). 
\end{aligned}
\end{equation*}

\begin{theorem}\label{Steklov existence}
Let $p \in (1, \infty)$ and $N \geq p.$ Let $g^+ \not \equiv 0$, $\int_{\pa \Om} g  < 0$ and  
$$g \in \left\{\begin{array}{ll}
                 \F_{\frac{N-1}{p-1}} & \text{  for } N > p,\\
                 \G_1 & \text{  for } N = p.
          \end{array}\right.$$ 
Then $$\la_1 = \inf \left\{ \displaystyle \int_{\Om} |\Gr \phi|^p : \phi \in \wp, \dispa g |\phi|^p = 1 \right\} $$ is the unique positive principal eigenvalue of \eqref{Steklov weight}. Furthermore, $\la_1$ is simple and isolated.
\end{theorem}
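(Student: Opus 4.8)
The plan is to obtain $\la_1$ by the direct method applied to the Rayleigh quotient $\phi\mapsto\dis\abs{\Gr\phi}^p$ on the constraint set where $\dispa g\abs{\phi}^p=1$, the only nonclassical ingredient being a compactness property of the weighted boundary term that replaces the (now unavailable) compactness of the trace embedding. First I would verify that $G(\phi):=\dispa g\abs{\phi}^p\,\dsg$ is well defined on $\wp$: for $N>p$ the Cianchi--Kerman--Pick embedding gives $\wp\hookrightarrow L^{\pcr,p}(\pa\Om)$, hence $\abs{\phi}^p\in L^{\frac{N-1}{N-p},1}(\pa\Om)$, and since $\big(\tfrac{N-1}{p-1}\big)'=\tfrac{N-1}{N-p}$, Hölder's inequality in Lorentz spaces pairs this against $g\in L^{\frac{N-1}{p-1},\infty}(\pa\Om)$; for $N=p$ one uses $\wp\hookrightarrow L^{\infty,N;-1}(\pa\Om)$, so $\abs{\phi}^p\in L^{\infty,1;-N}(\pa\Om)$, paired against $g\in L^{1,\infty;N}(\pa\Om)$ by the Lorentz--Zygmund Hölder inequality. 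The crux is the claim that $G$, and likewise the associated operator $\phi\mapsto\big(v\mapsto\dispa g\abs{\phi}^{p-2}\phi v\big)$, is \emph{compact}: $\phi_n\wra\phi$ in $\wp$ implies $G(\phi_n)\ra G(\phi)$. Here it is essential that $g$ lies in the closure of $\c(\pa\Om)$ in the relevant space and not merely in the ambient space: given $\eps>0$ pick $g_\eps\in\c(\pa\Om)$ with $\norm{g-g_\eps}$ small; as $g_\eps$ is bounded, the trace embedding into every strictly subcritical Lebesgue space is compact, so $\dispa g_\eps\abs{\phi_n}^p\ra\dispa g_\eps\abs{\phi}^p$, while the Hölder estimate above controls $\big|\dispa(g-g_\eps)(\abs{\phi_n}^p-\abs{\phi}^p)\big|$ by $C\norm{g-g_\eps}$ uniformly in $n$; letting $n\ra\infty$ and then $\eps\ra0$ gives the claim, so in particular $\M:=\set{\phi\in\wp:G(\phi)=1}$ is weakly sequentially closed.

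With this the direct method applies. Since $g^+\not\equiv0$ on a set of positive $\hm$-measure, $\M\neq\emptyset$ (take a $\c$ bump concentrated near a Lebesgue point of $g$ in $\set{g>0}$ and rescale). A minimizing sequence $\set{\phi_n}$ has $\dis\abs{\Gr\phi_n}^p$ bounded and is bounded in $\wp$: otherwise $\psi_n:=\phi_n/\norm{\phi_n}_{\wp}$ has $\Gr\psi_n\ra0$ in $L^p(\Om)$, hence $\psi_n\ra c$ strongly in $\wp$ with $c$ a nonzero constant, so $G(\psi_n)\ra\abs{c}^p\dispa g<0$, contradicting $G(\psi_n)=\norm{\phi_n}_{\wp}^{-p}G(\phi_n)\ra0$. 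Passing to a weakly convergent subsequence, lower semicontinuity of the Dirichlet energy and weak continuity of $G$ yield a minimizer $\phi_1\in\M$ with $\dis\abs{\Gr\phi_1}^p=\la_1$, and the Lagrange multiplier rule (testing with $\phi_1$) shows $\phi_1$ solves \eqref{Sw weak} with parameter $\la_1$. Replacing $\phi_1$ by $\abs{\phi_1}$, which again lies in $\M$ with the same energy, we may assume $\phi_1\ge0$; being $p$-harmonic in $\Om$ and not identically zero, $\phi_1>0$ in $\Om$ by the Harnack inequality, so $\la_1$ is a positive principal eigenvalue. Positivity of $\la_1$ follows because a minimizer for the value $0$ would be a nonzero constant $c$ with $\abs{c}^p\dispa g=1>0$, impossible as $\dispa g<0$.

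For uniqueness among positive principal eigenvalues I would invoke Picone's inequality for $\Dep$. A principal eigenfunction $v$ of a positive eigenvalue $\mu$ is nonconstant (otherwise the weak formulation forces $g\equiv0$), so $\dispa gv^p=\mu^{-1}\dis\abs{\Gr v}^p>0$ and $v$ may be normalized in $\M$, as may $\phi_1$. Using $v^p/(\phi_1+\de)^{p-1}$ and $\phi_1^p/(v+\de)^{p-1}$ as test functions in the respective weak formulations (admissible after the $\de$-regularization, then $\de\downarrow0$), Picone's inequality gives $0\le\la_1-\mu$ and $0\le\mu-\la_1$, hence $\mu=\la_1$. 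The same computation with $\mu=\la_1$ and two positive first eigenfunctions makes the pointwise Picone density have zero integral, so it vanishes identically, forcing those eigenfunctions to be proportional; and every first eigenfunction $w$ is of one sign, because $\abs{w}$ is again a minimizer, hence positive in $\Om$, so the continuous function $w$ cannot change sign on the connected set $\Om$. This establishes simplicity.

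It remains to show $\la_1$ is isolated; I would argue by contradiction. Let $\mu_n\ra\la_1$, $\mu_n\neq\la_1$, with eigenfunctions $\psi_n$ normalized in $\wp$. Testing with $\psi_n^-$ gives $\dis\abs{\Gr\psi_n^-}^p=\mu_n\dispa g(\psi_n^-)^p$; since $\mu_n>0$ for large $n$, a one-signed $\psi_n$ would be a principal eigenfunction with $\mu_n=\la_1$, so $\psi_n$ must change sign and $\psi_n^-$ is then a nonzero admissible function in the Rayleigh quotient, giving $\mu_n\ge\la_1$. As $\Dep$ enjoys the $(S_+)$ property and the boundary term is compact by the lemma above, $\psi_n\ra\psi$ strongly in $\wp$ along a subsequence, with $\psi$ a first eigenfunction, hence one-signed, say $\psi>0$ in $\Om$. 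Uniform interior estimates for $p$-harmonic functions ($L^\infty_{\mathrm{loc}}$ bounds controlled by $\norm{\psi_n}_{\wp}$, then $C^{1,\al}_{\mathrm{loc}}$) upgrade this to $\psi_n\ra\psi$ in $C^1_{\mathrm{loc}}(\Om)$, so $\set{\psi_n<0}$ collapses into a vanishing neighborhood of $\pa\Om$; a Poincaré-type inequality on that thin layer, together with $\dis\abs{\Gr\psi_n^-}^p=\mu_n\dispa g(\psi_n^-)^p$ and the Lorentz--Hölder control of $g^+$, forces $\psi_n^-\equiv0$ for $n$ large, contradicting that $\psi_n$ changes sign. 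I expect the compactness lemma to be the conceptual heart of the whole argument — it is precisely what survives for weights in $\F_d$, $\G_d$ while failing in the full Lorentz(-Zygmund) space — and, among the concluding assertions, the isolation step to be the most delicate, combining the $(S_+)$ property, uniform local regularity, and the thin-layer trace estimate.
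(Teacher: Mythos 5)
Your proof is correct in its overall strategy and, for the existence of the minimizer, the positivity $\la_1>0$, the principal eigenfunction, and uniqueness/simplicity via Picone, it coincides with what the paper does (direct method on $N_g$, compactness of $G$ via $\eps$-approximation by $\c(\pa\Om)$, Picone's identity). The one place you diverge substantially is the proof that $\la_1$ is \emph{isolated}. The paper's argument is light: from $J(\psi_n)=\la_n\to\la_1$ and the Palais--Smale condition, $\psi_n\to\phi_1$ in $\wp$; Egorov then gives a set $E$ with $\abs{E}<\eps$ on whose complement $\psi_n^-$ vanishes for large $n$; renormalizing $v_n:=(\int_{\pa\Om}g\abs{\psi_n^-}^p)^{-1/p}\psi_n^-\in N_g$ yields a minimizing sequence, hence $v_n\to\phi_1$, contradicting that each $v_n$ is supported in $E$. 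Your route instead upgrades the $\wp$-convergence to $C^1_{\mathrm{loc}}(\Om)$ via interior $L^\infty$ and $C^{1,\al}$ estimates for $p$-harmonic functions, concludes that $\set{\psi_n<0}$ retreats into a strip $\Om_{\de_n}$ with $\de_n\to0$, and then uses a trace inequality on $\Om_{\de_n}$ with constant $O(\de_n^{p-1})$ together with the $\F_d$/$\G_d$ splitting of $g^+$ to absorb $\mu_n\dispa g(\psi_n^-)^p$ into $\dis\abs{\Gr\psi_n^-}^p$ and force $\psi_n^-\equiv0$. This alternative is viable, but it rests on two nontrivial ingredients you invoke without supplying: uniform interior $C^{1,\al}$ estimates for $p$-harmonic functions, and a thin-layer trace/Poincar\'e inequality (with a vanishing constant) on a Lipschitz domain; in contrast, the paper's Egorov plus renormalization argument needs only basic measure theory on top of the P.S.\ lemma already established. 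One further omission relative to the paper: you stop at $\phi_1>0$ in $\Om$, whereas the paper pushes strict positivity up to $\pa\Om$ by testing with $(\phi_1+\eps)^{1-p}$ and Fatou, a fact that underlies its Egorov step and is also part of the stronger assertion ``does not change sign in $\overline{\Om}$.''
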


Indeed,  $L^{\frac{N-1}{p-1}}(\pa \Om)$ is contained in $\F_{\frac{N-1}{p-1}}$ (for $N > p$) and  $L^q(\pa\Om)$ (for $q > 1$) is contained in $\G_1$ (for $N=p$) (see Remark \ref{Stricly contained}). Thus the above theorem extends the result of \cite{Torne}.

Having obtained the right candidate for bifurcation point, we can study \eqref{Steklov pertub} for weights in appropriate Lorentz-Zygmund spaces. For this, let us consider the following set:
\begin{align*}
    \S = \left\{ (\la, \phi) \in \R \times \wp: (\la, \phi) \; \text{is a solution of} \; \eqref{Steklov pertub} \; \text{and} \; \phi \not \equiv 0 \right\}.
\end{align*}
We say $\C \subset \S$ is a continuum of nontrivial solutions of \eqref{Steklov pertub} if it is connected in $\R \times \wp.$  In this article, we prove the existence of a continuum $\C$ of nontrivial solutions of \eqref{Steklov pertub} that bifurcates from $(\la_1, 0)$.

For $p \in (1, \infty)$ and $g$ as in Theorem \ref{Steklov existence},  depending on the dimension  we make  the following  assumptions on  $r$ and $f$:
\begin{equation*}
({\bf H1}) \left\{ \begin{aligned}
&{\bf(a)} \quad \displaystyle\lim_{\abs{s} \rightarrow 0}\frac{\abs{r(s)}}{\abs{s}^{p-1}}=0 \; \text{ and} \;\abs{r(s)} \leq C\abs{s}^{\gamma - 1} \; \text{for some} \; \gamma\in \left(1, \frac{p(N-1)}{N-p} \right). \\
&{\bf(b)} \quad g\in \F_{\frac{N-1}{p-1}}, \; f \in \left\{\begin{array}{ll} 
                                                              \F_{\tilde{p}}, & \text {if }  \gamma \geq p, \; \text{where} \; \displaystyle \frac{1}{\tilde{p}} + \frac{\gamma(N-p)}{p(N-1)} = 1; \\  \F_{\frac{N-1}{p-1}}, & \text{if} \; \ga < p.  \\
                                                           \end{array} \right. 
                \end{aligned} \right.
\end{equation*}
\begin{equation*}
({\bf H2}) \left\{ \begin{aligned}
                     &{\bf(a)} \quad \displaystyle \lim_{\abs{s} \rightarrow 0}\frac{\abs{r(s)}}{\abs{s}^{N-1}}=0 \; \text{ and} \;  \abs{r(s)}  \leq  C\abs{s}^{\gamma-1} \; \text{for some} \; \gamma\in (1, \infty). \quad \quad \quad \quad\\
                     &{\bf (b)} \quad g\in \G_1, \; f  \in \G_d \; \text{with} \; d > 1.
                     \end{aligned} \right.
\end{equation*}

\begin{theorem}\label{bifur}
Let $p \in (1, \infty)$. Assume that  $ r,g \text{ and } f$  satisfy 
   ({\bf H1}) for $N>p$ and satisfy ({\bf H2}) for $N=p$. Then $\la_1$ is a bifurcation point of \eqref{Steklov pertub}. Moreover, there exists a continuum of nontrivial solutions $\C$ of \eqref{Steklov pertub} such that $(\la_1, 0) \in \overline{\C}$ and either 
   \begin{enumerate}[(i)]
       \item $\C$ is unbounded, or \item $\C$ contains the point $(\la, 0)$, where $\la$ is an eigenvalue of \eqref{Steklov weight} and $\la \neq \la_1$.
   \end{enumerate}
\end{theorem}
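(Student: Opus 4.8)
The plan is to recast \eqref{Steklov pertub} as an operator equation $\Phi_\la(\phi)=0$ in $\wp$, where $\Phi_\la=A-\la G-\la F$ and, for $\phi,v\in\wp$,
\begin{align*}
\inpr{A(\phi),v}&=\dis|\Gr\phi|^{p-2}\Gr\phi\cdot\Gr v\,\dx,\qquad \inpr{G(\phi),v}=\dispa g\,|\phi|^{p-2}\phi\, v\,\dsg,\\
\inpr{F(\phi),v}&=\dispa f\,r(\phi)\,v\,\dsg,
\end{align*}
and then to show that the topological degree of $\Phi_\la$ on a small ball about $0$ changes as $\la$ crosses $\la_1$, so that a global bifurcation principle of Rabinowitz type applies. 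Since $A$ is the $p$-Laplacian on $\wp$, whose kernel consists of the constants, $A$ is not invertible and the ordinary Leray--Schauder degree does not apply; instead I would use the Browder--Skrypnik degree for mappings of type $(S_+)$. The point is that $A\colon\wp\to(\wp)'$ is of type $(S_+)$ on $\wp$: if $\phi_n\wra\phi$ in $\wp$ and $\limsup\inpr{A(\phi_n),\phi_n-\phi}\le 0$, then monotonicity of the $p$-Laplacian forces $\Gr\phi_n\to\Gr\phi$ in $L^p(\Om)$, while $\phi_n\to\phi$ in $L^p(\Om)$ by the Rellich theorem, so $\phi_n\to\phi$ in $\wp$. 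Since $G$ and $F$ will turn out to be compact, $\Phi_\la$ is an $(S_+)$-perturbation of a compact map and its degree is well defined, with $\Phi_\la(0)=0$ for every $\la$.

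\textbf{Compactness of $G$ and $F$.} This is where the Lorentz--Zygmund hypotheses enter and where the theorem improves on the $L^r$-theory of \cite{Torne}. The finer trace embeddings $\wp\embd L^{\frac{p(N-1)}{N-p},p}(\pa\Om)$ for $N>p$ and $\wp\embd L^{\infty,N;-1}(\pa\Om)$ for $N=p$ are not compact; nevertheless, for $g\in\F_{\frac{N-1}{p-1}}$ (resp.\ $g\in\G_1$) one decomposes $g=g_1+g_2$ with $g_1\in\c(\pa\Om)$ and $\|g_2\|$ small in $L^{\frac{N-1}{p-1},\infty}(\pa\Om)$ (resp.\ in $L^{1,\infty;N}(\pa\Om)$): the $g_1$-part yields a compact operator because $\wp$ embeds compactly into $L^q(\pa\Om)$ for exponents below the critical one, while the $g_2$-part is small uniformly on bounded subsets of $\wp$ by Hölder's inequality in Lorentz--Zygmund spaces, so $G$ is a uniform-on-bounded-sets limit of compact operators, hence compact. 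The same decomposition of $f$, together with the growth bound $|r(s)|\le C|s|^{\ga-1}$ (with $\ga$ below the critical exponent when $N>p$, arbitrary when $N=p$) and the Hölder-conjugate choice of $\tilde p$ (resp.\ of $d>1$) in \textbf{(H1)(b)}/\textbf{(H2)(b)}, shows that $F$ is well defined and compact; moreover, using $\lim_{s\to0}|r(s)|/|s|^{p-1}=0$ (resp.\ $\lim_{s\to0}|r(s)|/|s|^{N-1}=0$) the control of $F$ on small balls is quantitative, namely $\|F(\phi)\|_{(\wp)'}=o(\|\phi\|_{\wp}^{p-1})$.

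\textbf{Bifurcation points are eigenvalues, and the degree jumps at $\la_1$.} If $(\la_n,\phi_n)$ are nontrivial solutions with $\la_n\to\la$ and $t_n:=\|\phi_n\|_{\wp}\to0$, set $w_n=\phi_n/t_n$ and divide the equation by $t_n^{p-1}$; the quantitative control of $F$ gives $t_n^{1-p}F(\phi_n)\to0$ in $(\wp)'$, so $A(w_n)=\la_n G(w_n)+o(1)$. Since $\|w_n\|=1$, along a subsequence $w_n\wra w$; compactness of $G$ gives $\inpr{G(w_n),w_n-w}\to0$, hence $\limsup\inpr{A(w_n),w_n-w}\le0$, and the $(S_+)$ property yields $w_n\to w$ in $\wp$ with $\|w\|=1$ and $A(w)=\la G(w)$, so $\la$ is an eigenvalue of \eqref{Steklov weight}. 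In particular $\la\mapsto\deg_{(S_+)}(\Phi_\la,B_\rho(0),0)$ is locally constant off the eigenvalue set, for every sufficiently small $\rho$. To detect a jump at $\la_1$ I would use Theorem~\ref{Steklov existence} ($\la_1$ simple and isolated) and the fact that, because $\int_{\pa\Om}g<0$, any eigenfunction $\phi$ with $\dispa g|\phi|^p\ge0$ satisfies $\dis|\Gr\phi|^p\dx=\la\dispa g|\phi|^p\dsg\ge\la_1\dispa g|\phi|^p\dsg$, so there are no eigenvalues of \eqref{Steklov weight} in $(0,\la_1)$ and no nonzero constant eigenfunctions. On a small ball the higher-order term $\la F$ can then be homotoped away (admissible by the quantitative control of $F$), reducing the computation to $\deg_{(S_+)}(A-\la G,B_\rho(0),0)$, the degree of the gradient of $\Psi_\la(\phi)=\frac1p\dis|\Gr\phi|^p\dx-\frac{\la}{p}\dispa g|\phi|^p\dsg$. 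For $\la\in(0,\la_1)$ the variational characterisation of $\la_1$ gives $\dis|\Gr\phi|^p\dx\ge\la_1\dispa g|\phi|^p\dsg$ for all $\phi$, so $\Psi_\la$ is coercive on $\wp$ (using $\int_{\pa\Om}g<0$ to handle the constant directions) with $0$ as its unique critical point and strict global minimiser, whence the degree equals $1$; for $\la\in(\la_1,\la_1+\eps)$ the first eigenfunction is a direction along which $\Psi_\la$ is negative, and simplicity of $\la_1$ forces the degree to jump (it becomes $-1$), the $p$-Laplacian analogue of the Morse-index jump carried out as in \cite{Drabek2,Drabek-Huang,Girg,DelPino}. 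Thus the degree changes across $\la_1$, so $\la_1$ is a bifurcation point.

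\textbf{Global structure.} With $\Phi_\la(0)=0$ for all $\la$, $\Phi_\la$ an $(S_+)$-perturbation of a compact map, and a genuine change of degree at the isolated value $\la_1$, the Rabinowitz-type global bifurcation theorem in the $(S_+)$ setting (\cite{Rabinowitz}, adapted as in \cite{Drabek2,Drabek-Huang,Girg,DelPino}) provides a connected set $\C\subset\S$ with $(\la_1,0)\in\overline\C$ that is either unbounded in $\R\times\wp$ or contains a point $(\la,0)$ at which the degree also jumps; by the previous step such a $\la$ is an eigenvalue of \eqref{Steklov weight} with $\la\ne\la_1$, which is exactly the stated dichotomy. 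I expect the two main obstacles to be: establishing compactness of $G$ and $F$ precisely at the borderline Lorentz--Zygmund integrability (the technical heart of the improvement over the $L^r$-theory), and pinning down the value of $\deg_{(S_+)}(A-\la G,B_\rho(0),0)$ on the two sides of the simple eigenvalue $\la_1$ when $p\ne2$, where the lack of differentiability of the principal part rules out the classical linearised argument.
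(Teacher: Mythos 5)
Your proposal follows essentially the same route as the paper: recast the problem via $J'-\la(G'+F)$, use the Browder--Skrypnik degree for $(S_+)$ (equivalently, class $\al(\wp)$) maps, establish compactness of $G'$ and $F$ by a density decomposition $g=g_\ep+(g-g_\ep)$ with $g_\ep\in\c(\pa\Om)$, show $\norm{F(\phi)}_{(\wp)'}=o(\norm{\phi}_{\wp}^{p-1})$ so the nonlinear term is homotopically negligible near $0$, compute a degree jump from $+1$ to $-1$ across $\la_1$, and conclude with a Rabinowitz-type alternative. The one step you describe too loosely is the index computation for $\la\in(\la_1,\la_1+\de)$: simplicity of $\la_1$ does not by itself force the index to become $-1$, and since $J-\la G$ is not bounded below there, your functional $\Psi_\la$ admits no minimiser to anchor the computation. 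The paper's actual mechanism (from Dr\'abek--Huang, which you cite) is to introduce a convex truncation $\eta$ and the auxiliary functional $\eta_\la(\phi)=J(\phi)-\la G(\phi)+\eta(J(\phi))$, which is coercive and bounded below, has exactly three critical points $0,\pm c\phi_1$, with $\pm c\phi_1$ global minimisers of index $+1$; the total degree on a large ball is $+1$ because $\langle\eta'_\la(\phi),\phi\rangle>0$ there, so additivity gives $\text{ind}(\eta'_\la,0)=-1$, and $\eta'_\la=J'-\la G'$ near $0$. Likewise, the paper does not invoke a global bifurcation theorem as a black box but reproves it by hand via \cite[Lemma 1.2]{Rabinowitz}, a lower-semicontinuity lemma for the radius function $r(\la)$, and a generalized homotopy invariance of degree; this is more presentation than substance, but be aware the paper actually carries it out.
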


The rest of the article is organized as follows. In Section 2, we give the definition and list some properties  of symmetrization and Lorentz-Zygmund spaces. We also state the classical trace embedding theorems and their refinements. The definition and some of the properties of degree of a certain class of  nonlinear maps between $\wp$ and $(\wp)'$ are also given in this section. In Section 3, we develop a functional framework associated with our problem and prove many  results that we needed to prove our main theorems. Section 4 contains the proofs of Theorem 1.1 and Theorem 1.2.

\section{Preliminaries}
In this section, we briefly describe the one-dimensional decreasing rearrangement with respect to $(N-1)$-dimensional Hausdorff measure. Using this,  we define Lorentz-Zygmund spaces over the  boundary and give examples of functions in these spaces. Further, we state  the classical trace embeddings of $\wp$, and it's refinements due to Cianchi et al. We also define the degree for a certain class of nonlinear maps and list  some of the results that we use in this article.

\subsection{Symmetrization}
Let $ \Om \subset \R^N $ be a  bounded Lipschitz domain. Let  $ \mathcal{M}(\pa \Om)$ be the collection of all real valued $(N-1)$-dimensional Hausdorff measurable functions defined on $\pa \Om$. Given a function $f \in \mathcal{M}(\pa \Om),$ and for $s > 0, $ we define $E_{f}(s) = \{ x \in \pa \Om : |f(x)| > s \}.$  The \textit{distribution function} $ \alpha_{f} $ of $f$ is defined as $\alpha_{f}(s) = \hm(E_{f}(s)) \; \text{for} \; s > 0.$
We define the \textit{one dimensional decreasing rearrangement} $ f^{*} $ of $f$ as
\begin{align*}
       f^*(t) = \inf \left\{ s > 0 : \alpha_{f}(s) < t \right\}, \; \mbox{ for } t > 0. 
\end{align*}
The map $f \mapsto f^*$ is not sub-additive. However, we obtain a sub-additive function from $f^*,$ namely the maximal function $f^{**}$ of $f^*$, defined by 
\begin{equation*}\label{maximal}
    f^{**}(t)=\frac{1}{t}\int_0^tf^*(\tau)\, {\rm d}\tau, \quad t>0.
\end{equation*}
Next we state one important inequality concerning the symmetrization \cite[Theorem 3.2.10]{EdEv}.

\begin{proposition}\label{HL}(Hardy-Littlewood inequality)
Let $N \geq 2$ and let  $\Om$ be a   bounded Lipschitz domain in $\R^N$. Let $f$ and $g$ be nonnegative measurable functions defined on $\pa \Om$. Then
$$\int_{ \pa \Omega} fg \; \dsg \leq  \int_0^{\hm(\pa \Om)}f^*(t) g^*(t) \;\dt.$$
\end{proposition}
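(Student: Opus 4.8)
The plan is to reduce both sides of the claimed inequality to one and the same double integral over $(0,\infty)^2$ by means of the layer-cake (Cavalieri) representation, and then to compare the two integrands through monotonicity of the measure $\hm$. Throughout, all integrands are nonnegative, so every interchange of integrals below is justified by Tonelli's theorem.

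First I would write, for nonnegative measurable $f$ and $g$ and a.e.\ $x \in \pa\Om$, the identities $f(x) = \int_0^\infty \chi_{E_f(s)}(x)\, \ds$ and $g(x) = \int_0^\infty \chi_{E_g(\tau)}(x)\, {\rm d}\tau$, where $\chi$ denotes a characteristic function and $E_f(s)$, $E_g(\tau)$ are the superlevel sets introduced above. Multiplying these two representations and integrating over $\pa\Om$, Tonelli's theorem gives
\begin{align*}
\int_{\pa\Om} fg\, \dsg = \int_0^\infty\!\!\int_0^\infty \hm\big( E_f(s) \cap E_g(\tau) \big)\, \ds\, {\rm d}\tau .
\end{align*}
Since $E_f(s) \cap E_g(\tau)$ is contained in each of $E_f(s)$ and $E_g(\tau)$, monotonicity of $\hm$ yields $\hm\big( E_f(s) \cap E_g(\tau) \big) \le \min\{ \al_f(s), \al_g(\tau) \}$, whence
\begin{align*}
\int_{\pa\Om} fg\, \dsg \le \int_0^\infty\!\!\int_0^\infty \min\{ \al_f(s), \al_g(\tau) \}\, \ds\, {\rm d}\tau .
\end{align*}

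Next I would show that this last double integral equals $\int_0^{\hm(\pa\Om)} f^*(t) g^*(t)\, \dt$. The crucial input is the equimeasurability of $f$ and $f^*$: directly from $f^*(t) = \inf\{ s > 0 : \al_f(s) < t \}$ together with the monotonicity and right-continuity of the distribution function $\al_f$, one checks that $f^*(t) > s \Leftrightarrow t < \al_f(s)$ up to the null set $\{ t = \al_f(s) \}$, so that $\abs{ \{ t > 0 : f^*(t) > s \} } = \al_f(s)$, and similarly for $g$. Since $\al_f(s) \le \hm(\pa\Om)$ for every $s$, both $f^*$ and $g^*$ vanish for $t > \hm(\pa\Om)$, so the integral in question may as well be taken over all of $(0,\infty)$. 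Applying the layer-cake representation to $f^*$ and $g^*$, using that for a non-increasing function each superlevel set $\{ t : f^*(t) > s \}$ is a left interval $(0, \al_f(s))$ up to a null set, and invoking Tonelli's theorem once more gives
\begin{align*}
\int_0^{\hm(\pa\Om)} f^*(t) g^*(t)\, \dt = \int_0^\infty\!\!\int_0^\infty \abs{ (0, \al_f(s)) \cap (0, \al_g(\tau)) }\, \ds\, {\rm d}\tau = \int_0^\infty\!\!\int_0^\infty \min\{ \al_f(s), \al_g(\tau) \}\, \ds\, {\rm d}\tau .
\end{align*}
Chaining the three displays then yields the asserted inequality.

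The only genuinely delicate point in this scheme is the equimeasurability identity $\abs{ \{ t : f^*(t) > s \} } = \al_f(s)$; everything else is routine bookkeeping with Tonelli's theorem and monotonicity of measures. If one prefers, this identity (and indeed the whole proposition) may simply be quoted from standard references on rearrangements such as \cite{EdEv}; I have spelled out the short argument only to keep the exposition self-contained. An alternative route would be to bypass the model-side computation entirely by invoking the extremal characterization $\int_0^t f^*(\tau)\, {\rm d}\tau = \sup\{ \int_E f\, \dsg : \hm(E) = t \}$ together with an approximation by simple functions, but the layer-cake argument above seems the most transparent.
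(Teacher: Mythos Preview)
Your argument is correct and is the standard layer-cake proof of the Hardy--Littlewood inequality. Note, however, that the paper does not supply its own proof of this proposition: it is stated as a preliminary with a citation to \cite[Theorem 3.2.10]{EdEv} and no further justification. So there is no in-paper argument to compare against; your self-contained derivation via the Cavalieri representation and equimeasurability is essentially what one finds in that reference.
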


\subsection{Lorentz-Zygmund space}
The Lorentz-Zygmund spaces are three parameter family of function spaces that refine the classical Lebesgue spaces. For more details on Lorentz-Zygmund spaces, we refer to \cite{ColinRudnik, ET}. Here we consider the Lorentz-Zygmund spaces over $\pa \Om$  of a bounded domain $\Om$.

Let $\Om \subset \R^N$ be a bounded Lipschitz  domain. Let $f \in \mathcal{M}( \pa \Omega)$ and let $l_1(t) = 1 + \abs{\log(t)}$. For $(p,q, \al) \in [1,\infty]\times[1,\infty]\times \R$, consider the following quantity:
\begin{align*} 
       |f|_{(p,q; \al)} & := \norm{t^{\frac{1}{p}-\frac{1}{q}} {l_1(t)}^{\al} f^{*}(t)}_{{L^q((0,\hm(\pa \Om)))}} \\
                    & = \left\{\begin{array}{ll}
                              \left(\displaystyle\int_0^{\hm(\pa \Om)} \left[t^{\frac{1}{p}} {l_1(t)}^{\alpha} {f^{*}(t)} \right]^q \frac{\dt}{t}  \right)^{\frac{1}{q}}, &\; 1\leq q < \infty; \\ 
                              \displaystyle\sup_{0 < t < \hm(\pa \Om)} t^{\frac{1}{p}} {l_1(t)}^{\alpha} {f^{*}(t)}, &\; q=\infty.
                            \end{array}\right.
 \end{align*}
The Lorentz-Zygmund space $L^{p,q;\al}(\pa \Om)$ is defined as
\[ L^{p,q;\al}(\pa \Om) := \left \{ f\in \mathcal{M}( \pa \Om): \,   |f|_{(p,q;\al)}<\infty \right \},\]
where $ |f|_{(p,q;\al)}$ is a complete quasi norm on $L^{p,q;\al}(\pa \Om).$ For $p > 1,$ 
$$
\norm{f}_{(p,q, \al)} = \norm{t^{\frac{1}{p}-\frac{1}{q}} {l_1(t)}^{\al} f^{**}(t)}_{{L^q((0,\hm(\pa \Om)))}}
$$ 
is a norm in $L^{p,q;\al}(\pa \Om)$ which is equivalent to $\abs{f}_{(p,q, \al)}$ \cite[Corollary 8.2]{ColinRudnik}.  In particular,  $L^{p,q;0}(\pa \Om)$ coincides with the Lorentz space $L^{p,q}(\pa \Om)$ introduced by Lorentz in \cite{Lorentz}. In the following proposition we discuss some important properties of the Lorentz-Zygmund spaces that we will use in this article.

\begin{proposition}\label{properties}
Let $p,q,r,s \in [1, \infty]$ and $\al, \beta \in (-\infty, \infty).$
\begin{enumerate}[(i)]
      \item  Let $p \in (1, \infty)$. If $f \in L^{\infty, p; -1}(\pa \Om)$, then $\abs{f}^p \in L^{\infty,1; -p}(\pa \Om)$.  Moreover, there exists $C >         0$ such that $$ \norm{\abs{f}^p}_{(\infty,1; -p)} \leq C \norm{f}^p_{(\infty,p; -1)}. $$
      \item  Let $p \in (1, \infty)$. Then the space $ L^{1, \infty;p}(\pa \Om)$ is contained in the dual space of $ L^{\infty,1;-p}(\pa \Om)$.
      \item If $ r > p,$ then $L^{r,s; \beta}(\pa \Om) \hookrightarrow L^{p,q; \al}(\pa \Om)$, i.e., there exists a constant $C > 0$ such that
            \begin{align}\label{LZ3}
                 \norm{f}_{(p,q,\al)} \leq C \norm{f}_{(r,s,\beta)}, \quad \forall f \in L^{r,s; \beta}(\pa \Om).
            \end{align} 
      \item If either $q \leq s$ and $\al \geq \beta$ or, $q > s$ and $\al + \frac{1}{q} > \beta + \frac{1}{s},$ then $L^{p,q; \al}(\pa \Om) \hookrightarrow L^{p,s;\beta}(\pa \Om)$, i.e.,  there exists $C > 0$ such that 
      \begin{align}\label{LZ4}
                 \norm{f}_{(p,s;\beta )} \leq C \norm{f}_{(p,q; \al)}, \quad \forall f \in L^{p,q; \al}(\pa \Om).
            \end{align} 
             
      \item For $p \in (1, \infty)$, $ L^p(\pa \Om) \hookrightarrow L^{1, \infty;\alpha}(\pa \Om)$.
\end{enumerate}
\end{proposition}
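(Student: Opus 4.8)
The plan is to derive all five items directly from the definition of the quasi-norm $|f|_{(p,q;\al)}$, together with the Hardy--Littlewood inequality (Proposition \ref{HL}), the monotonicity of the decreasing rearrangement $f^*$, and the equivalence of $\norm{\cdot}_{(p,q,\al)}$ with $|\cdot|_{(p,q,\al)}$ (for first index larger than $1$) recorded above; write $L:=\hm(\pa\Om)$ for the total measure. I would prove the items in the order (i), (ii), (iv), (iii), (v), since (iii) is obtained by factoring through instances of (iv) and (v) is a special case of (iii).

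For (i), the key observation is the elementary identity $(|f|^p)^*(t)=(f^*(t))^p$, which follows from $\alpha_{|f|^p}(s)=\alpha_f(s^{1/p})$. Inserting it into the defining integrals and using $t^{1/\infty}=1$ yields the \emph{exact} identity $\big|\,|f|^p\,\big|_{(\infty,1;-p)}=|f|_{(\infty,p;-1)}^{\,p}$; the inequality for the norms then follows by applying the norm/quasi-norm equivalence at both ends (legitimate since the first index $\infty$ exceeds $1$). For (ii), I would read off from the definition the pointwise bound $g^*(t)\le t^{-1}l_1(t)^{-p}\,|g|_{(1,\infty;p)}$ and combine it with Proposition \ref{HL}: for $h\in L^{\infty,1;-p}(\pa\Om)$,
\[
\int_{\pa\Om}|g|\,|h|\,\dsg\le\int_0^{L}g^*(t)\,h^*(t)\,\dt\le |g|_{(1,\infty;p)}\int_0^{L}l_1(t)^{-p}h^*(t)\,\frac{\dt}{t}=|g|_{(1,\infty;p)}\,|h|_{(\infty,1;-p)},
\]
so $h\mapsto\int_{\pa\Om}gh\,\dsg$ is a bounded linear functional on $L^{\infty,1;-p}(\pa\Om)$; the resulting map is injective because $L^\infty(\pa\Om)\subseteq L^{\infty,1;-p}(\pa\Om)$ for $p>1$, so the pairing separates points.

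For (iv) I would treat the two cases separately. If $q>s$, write the $(p,s;\beta)$-integrand as $\big[t^{1/p}l_1(t)^{\al}f^*(t)\big]^{s}l_1(t)^{(\beta-\al)s}$ and apply H\"older's inequality on $(0,L)$ against $\dt/t$ with exponents $q/s$ and $q/(q-s)$; the first factor becomes $|f|_{(p,q;\al)}^{\,s}$ and the second is the constant $\big(\int_0^{L}l_1(t)^{(\beta-\al)sq/(q-s)}\,\dt/t\big)^{(q-s)/q}$, finite precisely when $(\beta-\al)\tfrac{sq}{q-s}<-1$; and since $\int_0^{L}l_1(t)^{\mu}\,\dt/t<\infty$ if and only if $\mu<-1$, this is exactly the hypothesis $\al+\tfrac1q>\beta+\tfrac1s$. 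If $q\le s$, I would first use $l_1\ge1$ and $\al\ge\beta$ to reduce to $\beta=\al$, and then obtain $|f|_{(p,s;\al)}\le C\,|f|_{(p,q;\al)}$ from the monotonicity of $f^*$ (from $f^*(\tau)\ge f^*(t)$ for $\tau\le t$ one gets, up to a routine adjustment for the logarithmic weight, $t^{1/p}l_1(t)^{\al}f^*(t)\lesssim|f|_{(p,q;\al)}$, and the range $q<s<\infty$ follows by interpolating this $L^\infty$-bound with the $L^q$-norm) — the standard Lorentz--Zygmund nesting, cf. \cite{ColinRudnik, ET}. Given (iv), item (iii) follows by factoring $L^{r,s;\beta}\hookrightarrow L^{r,\infty;\beta}\hookrightarrow L^{p,1;\al}\hookrightarrow L^{p,q;\al}$, where the outer two embeddings are instances of (iv) and the middle one is direct: from $|f|_{(r,\infty;\beta)}<\infty$ we get $f^*(t)\le|f|_{(r,\infty;\beta)}\,t^{-1/r}l_1(t)^{-\beta}$, hence $|f|_{(p,1;\al)}\le|f|_{(r,\infty;\beta)}\int_0^{L}t^{1/p-1/r}l_1(t)^{\al-\beta}\,\dt/t$, and this integral converges because the power of $t$ against $\dt/t$ is $1/p-1/r>0$, which absorbs the logarithmic factor near $0$. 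Finally, (v) is the instance $L^p(\pa\Om)=L^{p,p;0}(\pa\Om)\hookrightarrow L^{1,\infty;\al}(\pa\Om)$ of (iii), valid since $p>1$.

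The only genuinely delicate point I anticipate is part (iv) in the case $q>s$: the embedding then holds exactly under the sharp condition $\al+\tfrac1q>\beta+\tfrac1s$, and the argument turns on matching the integrability threshold $\mu<-1$ for $\int_0^{L}l_1(t)^{\mu}\,\dt/t$ to this inequality after the H\"older split. Everything else is bookkeeping with the definition, the Hardy--Littlewood inequality, and the monotonicity of $f^*$; throughout one passes freely between $|\cdot|$ and the equivalent norm $\norm{\cdot}$ wherever the first index exceeds $1$, which does not affect the continuity of the embeddings.
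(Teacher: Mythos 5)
Your proof is correct, and for parts (i), (ii) and (v) it essentially matches the paper's argument (item (i) via $(|f|^p)^*=(f^*)^p$, item (ii) via Hardy--Littlewood and the obvious weight split, item (v) by specialising the $r>p$ embedding). The one noteworthy difference is in (ii): the paper estimates using $f^{**},g^{**}$ and the norm $\norm{\cdot}$, whereas you estimate directly with $f^*,g^*$ and the quasi-norm $\abs{\cdot}$; both give the same conclusion up to constants, and your added remark on injectivity of the pairing (via $L^\infty\subset L^{\infty,1;-p}$ when $p>1$) is a small but sound complement the paper omits.

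For (iii) and (iv), however, you take a genuinely different route: the paper simply invokes Theorems 9.1 and 9.3 of the cited Lorentz--Zygmund reference, while you reconstruct both embeddings from scratch. Your proof of (iv) correctly splits into the two cases: for $q>s$ the H\"older split against $\dt/t$ with exponents $q/s$ and $q/(q-s)$ reduces matters to the convergence of $\int_0^{L}l_1(t)^\mu\,\dt/t$, and the algebra $\mu=(\beta-\al)\tfrac{sq}{q-s}<-1\iff\al+\tfrac1q>\beta+\tfrac1s$ does recover the stated threshold; for $q\le s$ you first drop the log weight using $\al\ge\beta$ and then use the monotonicity of $f^*$ to get the $L^{p,\infty;\al}$-bound and interpolate, which is the standard Lorentz nesting. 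Your factorization $L^{r,s;\beta}\hookrightarrow L^{r,\infty;\beta}\hookrightarrow L^{p,1;\al}\hookrightarrow L^{p,q;\al}$ for (iii) is clean: the outer arrows are instances of (iv) and the middle arrow reduces to $\int_0^{L}t^{1/p-1/r-1}l_1(t)^{\al-\beta}\,\dt<\infty$, which indeed holds because $1/p-1/r>0$ dominates the log. What this buys you is a self-contained proof of (iii)--(iv) at the modest cost of verifying one uniform comparison (namely that $t^{q/p}l_1(t)^{q\al}\lesssim\int_0^{t}\tau^{q/p-1}l_1(\tau)^{q\al}\,{\rm d}\tau$ uniformly on $(0,L]$, which follows from the asymptotics near $0$); it would have been worth stating that comparison explicitly, since it is the only place where the ``routine adjustment for the logarithmic weight'' needs to be made precise.
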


\begin{proof} 

\noi (i) If $f \in L^{\infty, p; -1}(\pa \Om)$, then $\abs{f}_{(\infty, p; -1)} < \infty.$ Hence using $(|f|^p)^{*} = ( f^{*} )^p,$ we get 
\begin{align*}
     \abs{ |f|^p }_{(\infty, 1;  -p)} =  \int_{0}^{\hm(\pa \Om)}  \frac{(|f|^p)^{*}}{ {(l_1(t))}^p} \; \frac{dt}{t} = \left( \left(    \int_{0}^{\hm(\pa \Om)} \left( \frac{f^{*}(t)}{l_1(t)} \right)^p \; \frac{dt}{t} \right)^{\frac{1}{p}} \right)^p = \abs{ f }^p_{(\infty, p; -1)}.
\end{align*}
Therefore, $\abs{f}^p \in L^{\infty, 1;  -p}(\pa \Om).$ Now by the equivallence of norms, there exists $C_1, C_2 > 0$ such that 
\begin{align*}
\norm{ |f|^p }_{(\infty, 1;  -p)} \le C_1 \abs{ f }^p_{(\infty, p; -1)} \le C_1 C_2  \norm{ f }^p_{(\infty, p; -1)}.  
\end{align*}
Thus there exists $C > 0$ such that $\norm{ |f|^p }_{(\infty, 1;  -p)} \leq C \norm{ f }^p_{(\infty, p; -1)}.$ \\
\noi (ii) Let $f \in L^{\infty,1;-p}(\pa \Om)$ and $g \in L^{1, \infty;p}(\pa \Om)$. Then using the Hardy-Littlewood inequality (Proposition \ref{HL}), 
\begin{align*}
\dispa fg \; \dsg &\leq \int_0^{\hm(\pa \Om)}f^*(t) g^*(t)\; \dt  \\
&\leq \left( \sup_{0 < t < \hm(\pa \Om)}{t  g^{**}(t)(l_1(t))^p} \right) \left( \int_0^{\hm(\pa \Om)}  \frac{f^{**}(t)}{l_1(t))^p}  \; \frac{dt}{t} \right) \\
& = \norm{g}_{(1, \infty;p)}\norm{f}_{(\infty,1;-p)}. 
\end{align*}
Thus $f$ is in the dual space of $L^{1, \infty;p}(\pa \Om)$. \\
\noi (iii) Follows from \cite[Theorem 9.1]{ColinRudnik}. (iv) Follows from \cite[Theorem 9.3]{ColinRudnik}. \\
\noi (v) Let $f \in L^p(\pa \Om).$ Since $p > 1$, using \eqref{LZ3} there exists $C > 0$ such that 
$$\norm{f}_{(1, \infty;\alpha)} \leq C \norm{f}_{L^p(\pa \Om)}.$$ Therefore, $ L^p(\pa \Om)$ is continuously embedded into $L^{1, \infty;\alpha}(\pa \Om)$.
\end{proof}

The following characterization of the function space $\G_d$ follows by similar arguments as in the proof of \cite[Theroem 16]{Anoop}.

\begin{proposition}\label{G_d}
Let $N \geq 2$ and $d \in [1, \infty).$ Then $f \in \G_d$ if and only if 
$$
   \underset{t \rightarrow 0}{\lim} \; t^{\frac{1}{d}}(l_1(t))^N f^*(t) = 0.
$$ 
\end{proposition}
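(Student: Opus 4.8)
The plan is to prove the two inclusions separately, using the density of $\c(\pa\Om)$ in both relevant function spaces and the elementary properties of the rearrangement. Write $L = \hm(\pa\Om)$ and set $\Phi(t) = t^{1/d}(l_1(t))^N$; note $\Phi$ is continuous on $(0,L]$ with $\Phi(t)\to 0$ as $t\to 0^+$ (since the power $t^{1/d}$ beats the logarithmic factor). For the forward implication, suppose $f\in\G_d$. By definition there is a sequence $g_k\in\c(\pa\Om)$ with $|f-g_k|_{(d,\infty;N)}\to 0$. Each $g_k$ is bounded on the compact set $\pa\Om$, so $g_k^*$ is bounded, hence $\Phi(t)\,g_k^*(t)\to 0$ as $t\to 0^+$ because $\Phi(t)\to 0$. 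By the triangle inequality for the quasinorm (or directly, using that $(f)^*(t)\le (f-g_k)^*(t/2)+(g_k)^*(t/2)$ and that $\Phi$ is, up to a constant on $(0,L/2)$, comparable to its value at $2t$) we get
\begin{align*}
\limsup_{t\to 0^+}\Phi(t)\,f^*(t)\le C\,|f-g_k|_{(d,\infty;N)}
\end{align*}
for every $k$; letting $k\to\infty$ forces $\lim_{t\to 0^+}\Phi(t)\,f^*(t)=0$. One subtlety here is the behaviour of the logarithmic weight under the halving $t\mapsto t/2$: since $l_1(t/2)/l_1(t)$ is bounded above and below on $(0,L/2)$, the constant $C$ is harmless; this is the kind of routine estimate I would include but not belabour.

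For the reverse implication, suppose $f\in L^{d,\infty;N}(\pa\Om)$ satisfies $\Phi(t)f^*(t)\to 0$ as $t\to 0^+$. I must produce continuous functions approximating $f$ in the quasinorm. First truncate: for $M>0$ let $f_M = \max(-M,\min(M,f))$, so $f_M^*(t)=\min(M,f^*(t))$. Then $(f-f_M)^*(t)\le f^*(t)\chi_{\{f^*>M\}}$, and since $f^*$ is nonincreasing, $f^*(t)>M$ only for $t<t_M$ where $t_M\to 0$ as $M\to\infty$. Hence
\begin{align*}
|f-f_M|_{(d,\infty;N)}=\sup_{0<t<t_M}\Phi(t)f^*(t)\le \sup_{0<t<t_M}\Phi(t)f^*(t)\xrightarrow[M\to\infty]{}0
\end{align*}
by the hypothesis. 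So it suffices to approximate each bounded function $f_M\in L^\infty(\pa\Om)$ by continuous functions in the $L^{d,\infty;N}$-quasinorm. For this I would approximate $f_M$ in $L^q(\pa\Om)$ (some fixed $q\in(d,\infty)$) by $h_j\in\c(\pa\Om)$ with $\|h_j\|_\infty\le M$ (standard: mollify / use Lusin-type arguments on the Lipschitz boundary, or invoke density of $\c(\pa\Om)$ in $L^q$ together with a truncation). Then control the $L^{d,\infty;N}$-quasinorm of $f_M-h_j$ by splitting the sup at a small cutoff $\delta$: on $(\delta,L)$ the weight $\Phi$ is bounded and $(f_M-h_j)^*(t)\le (f_M-h_j)^{**}(t)\le t^{-1}\|f_M-h_j\|_{L^1}\le C\|f_M-h_j\|_{L^q}\to 0$; on $(0,\delta)$ we use $(f_M-h_j)^*(t)\le 2M$ and $\sup_{0<t<\delta}2M\Phi(t)\to 0$ as $\delta\to 0$ uniformly in $j$. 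A diagonal choice of $\delta$ and $j$ then gives $h_j\to f_M$ in $L^{d,\infty;N}$, completing the approximation and hence $f\in\G_d$.

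The main obstacle is the reverse direction, specifically the approximation of a bounded function by continuous functions in the Lorentz–Zygmund quasinorm rather than in a Lebesgue norm: the quasinorm is governed by small-$t$ (i.e.\ large-value) behaviour of the rearrangement, so a crude $L^q$ approximation does not immediately transfer. The two-scale splitting above (cutoff $\delta$ for the weight-degenerate part, $L^q\hookrightarrow L^1$-type control for the bulk) is the device that handles this, and it is exactly the step where one must be careful that all constants are uniform in the approximating index. The forward direction and the truncation step are comparatively routine, relying only on monotonicity of $f^*$ and on $\Phi(0^+)=0$. This is precisely the structure of the argument in \cite[Theorem 16]{Anoop}, which is why the statement can be asserted ``by similar arguments''.
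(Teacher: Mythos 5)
Your proof is correct and follows exactly the standard density argument that the paper defers to \cite[Theorem 16]{Anoop}: the forward direction via the rearrangement subadditivity $f^*(t)\le (f-g_k)^*(t/2)+g_k^*(t/2)$ together with the vanishing of the weight $t^{1/d}(l_1(t))^N$ as $t\to 0^+$ and the boundedness of $g_k^*$ for continuous $g_k$, and the reverse direction via truncation at level $M$ followed by a two-scale estimate controlling the $C^1$-approximation of the bounded truncation. Two small polishing remarks: the hypothesis $f\in L^{d,\infty;N}(\pa\Om)$ that you add at the start of the reverse direction is in fact automatic from the limit condition (since $f^*$ is nonincreasing and $t^{1/d}(l_1(t))^N$ is bounded on $(0,\hm(\pa\Om)]$), and the passage from merely continuous truncated approximants to genuinely $C^1$ ones needs one further mollification, which is harmless because $\|\cdot\|_{L^\infty(\pa\Om)}$ already dominates the $L^{d,\infty;N}(\pa\Om)$ quasinorm on a finite measure space.
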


Next we list some properties of the Lorentz spaces. For more details on Lorentz spaces, we refer to \cite{Adams, EdEv, Hunt}.

\begin{proposition}\label{Lorentz properties}
 Let $p,q,r \in [1, \infty]$.
 \begin{enumerate}[(i)]
    \item Generalized H\"{o}lder inequality: Let $f \in L^{p_1, q_1}(\pa \Om)$ and $g \in L^{p_2, q_2}(\pa \Om)$, where  $(p_i, q_i) \in (1, \infty) \times       [1, \infty]$ for $i = 1,2$. If $(p,q)$ be such that $\frac{1}{p} = \frac{1}{p_1} + \frac{1}{p_2}$ and 
          $\frac{1}{q} = \frac{1}{q_1} + \frac{1}{q_2},$ then 
          \begin{align*}
                \norm{fg}_{(p,q)} \leq C \norm{f}_{(p_1,q_1)} \norm{g}_{(p_2,q_2)},
           \end{align*}
          where  $C = C(p) > 0$ is a constant such that $C = 1,$ if $p=1$ and $C = \p,$ if $p > 1$.
    \item  For $r>0$, $\norm{\abs{f}^{r}}_{\left(\frac{p}{r}, \frac{q}{r} \right)} = \norm{f}^{r}_{(p,q)}.$ 
    \end{enumerate}
\end{proposition}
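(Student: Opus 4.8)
The plan is to reduce both assertions to elementary properties of the one-dimensional decreasing rearrangement $f^*$ on the interval $(0,L)$, where $L:=\hm(\pa\Om)$, working throughout with the quasi-norm $\abs{\cdot}_{(p,q)}:=\abs{\cdot}_{(p,q;0)}$ and passing to the equivalent norm $\norm{\cdot}_{(p,q)}$ (for $p>1$) only at the very end via \cite[Corollary 8.2]{ColinRudnik}.

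For (i), the first step is the pointwise rearrangement estimate $(fg)^*(t)\le f^*(t/2)\,g^*(t/2)$ for $t>0$. This follows from the set inclusion $\set{\abs{fg}>s_1 s_2}\subseteq\set{\abs{f}>s_1}\cup\set{\abs{g}>s_2}$, which gives $\al_{fg}(s_1 s_2)\le\al_f(s_1)+\al_g(s_2)$, upon letting $s_1\downarrow f^*(t/2)$ and $s_2\downarrow g^*(t/2)$. Next, using $\tfrac1p=\tfrac1{p_1}+\tfrac1{p_2}$ I would factor
\[
t^{\frac1p}(fg)^*(t)\le 2^{\frac1p}\Big[(t/2)^{\frac1{p_1}}f^*(t/2)\Big]\Big[(t/2)^{\frac1{p_2}}g^*(t/2)\Big],
\]
raise to the power $q$, integrate against $\dt/t$ over $(0,L)$, apply H\"older's inequality with the conjugate pair $\big(\tfrac{q_1}{q},\tfrac{q_2}{q}\big)$ — admissible since $\tfrac1q=\tfrac1{q_1}+\tfrac1{q_2}$ forces $q\le q_1$ and $q\le q_2$ — and finally substitute $u=t/2$ in each factor. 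This yields $\abs{fg}_{(p,q)}\le 2^{1/p}\abs{f}_{(p_1,q_1)}\abs{g}_{(p_2,q_2)}$; the cases $q_1=\infty$ or $q_2=\infty$ are handled identically with the relevant integral replaced by a supremum. At this point the inequality holds with a constant depending only on $p$. To obtain the sharp value stated ($1$ for $p=1$, $\p$ for $p>1$) I would rerun the same computation with the maximal function $f^{**}$ in place of $f^*$ together with a Hardy-type inequality, invoking \cite{Hunt} (or \cite{EdEv}) rather than reproducing that calculation.

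For (ii), I would first note that $\abs{f}^r$ has distribution function $\al_{\abs{f}^r}(s)=\al_f(s^{1/r})$, so $(\abs{f}^r)^*(t)=\big(f^*(t)\big)^r$ for every $t>0$. Substituting this into the definition of the quasi-norm and using $\tfrac{r}{p}\cdot\tfrac{q}{r}=\tfrac{q}{p}$ gives at once
\[
\Big|\,\abs{f}^r\,\Big|_{(p/r,\,q/r)}=\left(\int_0^{L}\Big[t^{\frac{r}{p}}\big(f^*(t)\big)^r\Big]^{\frac{q}{r}}\frac{\dt}{t}\right)^{\frac{r}{q}}=\left(\int_0^{L}\big[t^{\frac1p}f^*(t)\big]^{q}\frac{\dt}{t}\right)^{\frac{r}{q}}=\abs{f}_{(p,q)}^{\,r},
\]
with the obvious modification (a supremum in place of the integral) when $q=\infty$; for $p>1$ this transfers to the norms $\norm{\cdot}$ up to the fixed equivalence constants.

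The main obstacle is pinning down the \emph{sharp} constant $C=\p$ in (i): the elementary splitting above loses a factor $2^{1/p}$, and recovering the optimal value genuinely requires the $f^{**}$-version of the argument. Since only the existence of a finite $C=C(p)$ is used in the applications of this proposition later in the paper, the crude bound already suffices there; everything else in the proposition rests on the two rearrangement identities $(fg)^*(t)\le f^*(t/2)g^*(t/2)$ and $(\abs{f}^r)^*=(f^*)^r$.
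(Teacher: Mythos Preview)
Your proposal is correct. For (ii) you do exactly what the paper does: invoke $(\abs{f}^r)^*=(f^*)^r$ and read the identity off the definition of the Lorentz quasi-norm $\abs{\cdot}_{(p,q)}$; your remark that the passage to $\norm{\cdot}_{(p,q)}$ (the $f^{**}$-version) introduces equivalence constants is in fact more careful than the paper, which writes an equality for $\norm{\cdot}$ but whose one-line justification really only gives it for $\abs{\cdot}$.

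For (i) the paper takes the shortest possible route: it simply cites \cite[Theorem~4.5]{Hunt} and says nothing more. Your elementary argument via $(fg)^*(t)\le f^*(t/2)\,g^*(t/2)$ and H\"older on $(0,L)$ is a self-contained alternative that yields the inequality with a constant $2^{1/p}$ rather than the sharp $\p$; you correctly identify that the sharp constant requires the $f^{**}$-based computation and defer to Hunt for that. Since every subsequent use of this proposition in the paper only needs \emph{some} $C=C(p)$, your crude bound would already suffice for all downstream arguments; the paper's citation simply imports the optimal constant without doing the work.
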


\begin{proof} Proof of (i) follows from \cite[Theorem 4.5]{Hunt}. For $\al = 0,$ proof of (ii) directly follows from the definition of the Lorentz-Zygmund space.
\end{proof}

In the following we list some properties of the function space $\F_d.$

\begin{proposition}\label{F_d}
Let  $d,q \in (1, \infty).$ Then
\begin{enumerate}[(i)] 
    \item $L^{d,q}(\pa \Om) \subset \F_{d}$.
    \item Let $h \in L^{d, \infty}(\pa \Om)$ and $h > 0$. Let $f \in L^1(\pa \Om)$. If  $\int_{\pa \Om} h^{d-q} \abs{f}^q < \infty$ for $q \ge d$, then $f \in L^{d, q}(\pa \Om)$ and hence $f \in \F_{d}.$
    \item $f \in \F_{d}$ if and only if 
           \begin{align*}
                \lim_{t \rightarrow 0}t^{\frac{1}{d}} f^*(t) = 0 = \lim_{t \rightarrow \hm(\pa \Om)}t^{\frac{1}{d}} f^*(t).
            \end{align*}
\end{enumerate}
\end{proposition}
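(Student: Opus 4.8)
The plan is to prove the three parts in a convenient order, drawing on the abstract structure of rearrangement-invariant spaces and the characterization of the "small" subspace $\F_d$ as a closure of nice functions.

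First I would establish (iii), since it is the structural heart of the statement and the other parts will lean on it. The space $L^{d,\infty}(\pa\Om)$ is the quasi-normed space of $f$ with $\sup_{t} t^{1/d} f^*(t) < \infty$, and $\F_d$ is by definition the closure of $\c(\pa\Om)$ in this space. One direction: if $f\in\c(\pa\Om)$ (continuous, hence bounded on the compact set $\overline{\pa\Om}$, wait—$\c$ here denotes $C^1(\pa\Om)$, which is bounded on compact boundary), then $f^*$ is bounded, so $t^{1/d}f^*(t)\to 0$ as $t\to 0$; and since $f$ bounded forces $f^*(t)=0$ for $t$ near $\hm(\pa\Om)$ once $t$ exceeds the measure of the support, actually $f^*$ is just bounded so $t^{1/d}f^*(t)$ stays bounded—but to get the limit at $\hm(\pa\Om)$ to vanish one uses that $f^*$ is nonincreasing and the mass condition. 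The set $Z_d$ of all $f\in L^{d,\infty}$ satisfying the two limit conditions is easily seen to be a closed linear subspace of $L^{d,\infty}(\pa\Om)$ (closedness uses the uniform control $|(f-g)^*(t)|\le$ quasi-norm $\cdot\, t^{-1/d}$ together with $(f)^*(t)\le (g)^*(t/2)+(f-g)^*(t/2)$), and it contains $\c(\pa\Om)$, hence contains $\F_d$. Conversely, given $f\in Z_d$, I would approximate $f$ in the $L^{d,\infty}$ quasi-norm by truncations $f_n = \mathrm{sign}(f)\min(|f|,n)\cdot \mathbf{1}_{\{|f|>1/n\}}$, which are bounded with support of finite measure bounded away from the full measure, then mollify/approximate these by $C^1$ functions on the Lipschitz boundary; the limit conditions on $f^*$ are exactly what makes $\|f-f_n\|_{(d,\infty)}\to 0$. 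This mirrors the argument for \cite[Theorem 16]{Anoop} and for Proposition~\ref{G_d}.

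For (i): given $f\in L^{d,q}(\pa\Om)$ with $q<\infty$, by Proposition~\ref{properties}(iv) (with $p=d$, the parameters $\al=\beta=0$, $q\le s=\infty$) we have the embedding $L^{d,q}(\pa\Om)\hookrightarrow L^{d,\infty}(\pa\Om)$, so $f\in L^{d,\infty}$ with finite quasi-norm; then I would verify the two limit conditions of (iii). Finiteness of $\int_0^{\hm(\pa\Om)}[t^{1/d}f^*(t)]^q\,\frac{\dt}{t}$ forces $t^{1/d}f^*(t)\to 0$ as $t\to 0$ and as $t\to\hm(\pa\Om)$: indeed if $\liminf_{t\to 0}t^{1/d}f^*(t)=c>0$ then, using monotonicity of $f^*$, the integrand is $\gtrsim c^q/t$ near $0$, a divergent integral; the endpoint $\hm(\pa\Om)$ is handled similarly (or trivially, as $f^*$ is finite there). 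Hence $f\in\F_d$ by (iii).

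For (ii): with $h>0$ in $L^{d,\infty}(\pa\Om)$, $q\ge d$, and $\int_{\pa\Om}h^{d-q}|f|^q<\infty$, the goal is $f\in L^{d,q}(\pa\Om)$, which by (i) gives $f\in\F_d$. The idea is a weighted Hardy–Littlewood / Chebyshev argument. On the set $\{|f|>s\}$ one splits according to whether $h$ is large or small: where $h> t$ we bound $h^{d-q}\le t^{d-q}$ (since $d-q\le 0$), and the measure of $\{h>t\}$ is $\le \|h\|_{(d,\infty)}^d t^{-d}$; optimizing the splitting parameter $t$ against $s$ yields a bound on $\hm(\{|f|>s\})$ of the form $\le C s^{-d}$ coming from the finite integral $\int h^{d-q}|f|^q$, which is precisely $f\in L^{d,\infty}$, and more carefully tracking the computation (or passing through the layer-cake formula $\int h^{d-q}|f|^q = q\int_0^\infty s^{q-1}\int_{\{|f|>s\}}h^{d-q}\,\dsg\,\ds$) gives the stronger $L^{d,q}$ bound $\int_0^{\hm(\pa\Om)}[t^{1/d}f^*(t)]^q\frac{\dt}{t}<\infty$. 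I expect the weighted estimate in part (ii) to be the main obstacle: one must handle the interaction between the distribution function of $f$ and the weight $h$ cleanly, and the endpoint case $q=d$ (where $h^{d-q}=1$ and the claim degenerates to $L^d\subset\F_d$) should be checked separately as a sanity case. Parts (i) and (iii) are comparatively routine given the cited results of \cite{ColinRudnik} and the template of \cite{Anoop}.
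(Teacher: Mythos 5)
Your plan follows a genuinely different route from the paper's for parts (i) and (ii), and the same outline (adapting \cite{AMM}) for (iii); but the detailed reasoning you give for (i) and (iii) contains a real gap at the upper endpoint $t\to\hm(\pa\Om)$.

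For part (i) the paper does not pass through (iii) at all: it observes that $L^{d,q}(\pa\Om)\hookrightarrow L^{d,\infty}(\pa\Om)$ by Proposition~\ref{properties}(iv) and that $\c(\pa\Om)$ is dense in $L^{d,q}(\pa\Om)$ for finite $q$; the continuous embedding carries an approximating sequence into $L^{d,\infty}(\pa\Om)$, so $L^{d,q}(\pa\Om)\subset\F_d$ directly. For part (ii) the paper's argument is a short factorization: with $g=h^{d/q-1}|f|$ one has $g\in L^q(\pa\Om)=L^{q,q}(\pa\Om)$ by the hypothesis, and $h^{1-d/q}\in L^{dq/(q-d),\infty}(\pa\Om)$ by Proposition~\ref{Lorentz properties}(ii); since $|f|=g\,h^{1-d/q}$ and $\frac1q+\frac{q-d}{dq}=\frac1d$, generalized H\"older gives $f\in L^{d,q}(\pa\Om)$, and (i) finishes. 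Your layer-cake/Chebyshev sketch is more computational and the optimization step is not carried out; the factorization is the clean route.

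The gap: you assert (and need) that finiteness of $\int_0^{\hm(\pa\Om)}[t^{1/d}f^*(t)]^q\,\frac{dt}{t}$, or boundedness of $f$, forces $\lim_{t\to\hm(\pa\Om)}t^{1/d}f^*(t)=0$. On the finite measure space $\pa\Om$ this is false. Take $f\equiv1$: it lies in $\c(\pa\Om)\subset\F_d$, but $f^*\equiv1$ on $(0,\hm(\pa\Om))$, so $t^{1/d}f^*(t)\to\hm(\pa\Om)^{1/d}\ne0$, while the integral $\int_0^{\hm(\pa\Om)}t^{q/d-1}\,dt$ is finite. Thus your monotonicity argument only works at $t\to0$; near the other endpoint the integrand is simply bounded and gives no decay, and the step "$Z_d$ contains $\c(\pa\Om)$" in your proof of (iii) fails for constants. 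This is also a flaw in the paper's statement of (iii), which was transplanted from the infinite-measure setting of \cite{AMM} where the endpoint is $t\to\infty$ and constants are not in $L^{d,\infty}$; compare Proposition~\ref{G_d}, which correctly keeps only the $t\to0$ condition. Because the paper derives (i) and (ii) independently of (iii) and only ever uses the $t\to0$ half of (iii), this slip is harmless there, but your proposed derivation of (i) via (iii) inherits it and would need to be replaced by the density-plus-embedding argument.
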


\begin{proof}
\noi (i) Using \eqref{LZ4} for $\al = \beta = 0$ and by the density arguments, we get $L^{d,q}(\pa \Om) \subset \F_{d}$. \\
\noi (ii) The result is obvious for $q = d$. For $q > d$, set $g = h^{\frac{d}{q} - 1} \abs{f}$. Then $g \in L^q(\pa \Om).$ Using Proposition \ref{Lorentz properties}, $h^{1- \frac{d}{q}} \in L^{\frac{dq}{q-d}, \infty}(\pa \Om).$ Therefore, by the generalized H\"{o}lder inequality (Proposition \ref{Lorentz properties}), $f \in L^{d, q}(\pa \Om).$ \\
\noi (iii) Follows by the similar arguments as in \cite[Theorem 3.3]{AMM}.
\end{proof}

\subsection{Examples}
Now we give some examples of functions in the Lorentz-Zygmund spaces that are defined on $\pa \Om$  of a Lipschitz bounded domain $\Om$.

\begin{example}\label{Ex1}
For $\Om = \{ (x,y) \in \R^2 : x^2 + y^2 < 1 \}$, we consider 
\begin{equation*} 
    g_1(x,y) = \abs{y}^{-\frac{1}{2}}, \quad \forall \; (x,y) \in \pa \Om.
\end{equation*}
For $s> 0,$ we can compute
\begin{equation*} 
 \begin{aligned}
      \al_{g_1}(s) = \left\{\begin{array}{ll} 
                               2\pi, & \text {for} \quad 0<s<1,\\ 
                               4 \,\sin^{-1}(\frac{1}{s^2}), & \text{for} \quad s \geq 1.\\
                          \end{array} \right.
\end{aligned}
\end{equation*} 
Thus $g_1^*(t) = \left( \text{cosec} \left( \frac{t}{4} \right) \right)^{\frac{1}{2}}$. Therefore, 
$$ 
\underset{0 < t < 2 \pi}{\sup} t^{\frac{1}{2}} \left( \text{cosec} \left( \frac{t}{4} \right) \right)^{\frac{1}{2}} < \infty; \quad  \underset{0 < t < 2 \pi}{\sup} t (l_1(t))^2 \left( \text{cosec} \left( \frac{t}{4} \right) \right)^{\frac{1}{2}} < \infty.
$$
Hence  $g_1 \in L^{2, \infty}(\pa \Om)$ and $g_1 \in L^{1, \infty;2}(\pa \Om).$ Furthermore, 
\begin{align*}
       \lim_{t \rightarrow 0} t^{\frac{1}{2}} \left( \text{cosec} \left( \frac{t}{4} \right) \right)^{\frac{1}{2}} > 0; \quad \lim_{t \rightarrow 0} t (l_1(t))^2 \left( \text{cosec} \left( \frac{t}{4} \right) \right)^{\frac{1}{2}} > 0.
\end{align*}
Hence  $g_1 \not \in \F_2$ (by Proposition \ref{F_d}) and $g_1 \not \in \G_1$ (by Proposition \ref{G_d}).
\end{example}

\begin{example}\label{Ex2}
Let $p \in (1, \infty)$ and $N > p$. For $0 < R < \frac{1}{2}$, let $$ \Om = \left\{ (x_1, x_2, \cdot, \cdot, \cdot, x_N) \in \R^N: |x_i| < R    \; (\text{for} \; i = 1, \cdot, \cdot, \cdot, N-1), 0 < x_N < 2R \right\} $$  and $A= \left\{ (x_1, x_2, \cdot, \cdot, \cdot, x_{N-1}, 0): |x_i| < R \right\}$. Now consider 
\begin{equation*} 
 \begin{aligned}
     g_2(x) = \left\{\begin{array}{ll} 
                            \abs{x_1 \log(\abs{x_1})}^{-\frac{p-1}{N-1}}, & \text {for} \quad x \in A,\\ 
                             0, & \text{for} \quad x \in \pa \Om \setminus A.\\
                          \end{array} \right. 
 \end{aligned}
\end{equation*}
Clearly $g_2 \in L^1(\pa \Om)$ and $g_2 \not \in L^r(\pa \Om)$ for $r \in \Big[ \frac{N-1}{p-1}, \infty \Big).$  Let
\begin{equation*} 
 \begin{aligned}
     h(x) = \left\{\begin{array}{ll} 
                            \abs{x_1}^{-\frac{p-1}{N-1}}, & \text {for} \quad x \in A,\\ 
                             0, & \text{for} \quad x \in \pa \Om \setminus A.\\
                          \end{array} \right. 
 \end{aligned}
\end{equation*}
We calculate $\al_h(s) = 2^{N-1}R^{N-2} s^{-\frac{N-1}{p-1}}$ and  $h^*(t) = (2^{N-1}R^{N-2})^{\frac{p-1}{N-1}} t^{-\frac{p-1}{N-1}}.$
Therefore,  $h \in L^{\frac{N-1}{p-1},\infty}(\pa \Om)$. For $q = \frac{N}{p-1},$ 
\begin{equation*} 
\begin{aligned}
     h^{\frac{N-1}{p-1} - q}(x) = \left\{\begin{array}{ll} 
                            \abs{x_1}^{\frac{1}{N-1}}, & \text {for} \quad x \in A,\\ 
                             0, & \text{for} \quad x \in \pa \Om \setminus A.\\
                          \end{array} \right. 
 \end{aligned}
\end{equation*}
Further, 
\begin{align*}
    \int_{\pa \Om} h^{\frac{N-1}{p-1} - q} g_2^q \, \dsg  = 2^{N-1} R^{N-2} \int_0^{R} t^{-1} \abs{\log(t)}^{- \frac{N}{N-1}} \, \dt < \infty.
\end{align*}
Therefore, by Proposition \ref{F_d}, $g_2 \in L^{\frac{N-1}{p-1},q }(\pa \Om)$  and hence $g_2 \in \F_{\frac{N-1}{p-1}}.$ 
\end{example}

\begin{example}\label{Ex3}
For $0 < R < 1$, let $\Om$ and $A$ be given as in the above example. For $q \in (1, \infty),$ we consider 
\begin{equation*} 
 \begin{aligned}
g_3(x) = \left\{\begin{array}{ll} 
                        \abs{x_1}^{- \frac{1}{q}}, & \text {for} \quad x \in A,\\ 
                         0, & \text{for} \quad x \in \pa \Om \setminus A.\\
               \end{array} \right. 
     \end{aligned}
\end{equation*}
Clearly $g_3 \not \in L^q(\pa \Om)$ for $q \in (1, \infty).$ Further, we calculate $\al_{g_3}(s) = 2^{N-1}R^{N-2} s^{-q}$ and  
$g^*_3(t) = (2^{N-1}R^{N-2})^{\frac{1}{q}}t^{- \frac{1}{q}}.$ Moreover, 
$$ 
\underset{t \rightarrow 0}{\lim} \; t^{\frac{q-1}{q}} (1+ |\log(t)|)^N = 0 
$$ 
and hence  $g_3 \in \G_1$ (by Proposition \ref{G_d}).
\end{example}

 \subsection{Trace embeddings}
Now we state the trace embeddings that play a vital role in this article. First, we state the classical trace embeddings to the Lebesgue spaces \cite[Theorem 4.2, Theorem 4.6, Theorem 6.2]{Nevcas}.

\begin{proposition}[Classical trace embeddings]\label{classical}
Let $N \geq 2$ and let  $\Om$ be a Lipschitz  bounded domain in $\R^N$. Let $p\in (1,\infty)$. Then the following embeddings hold:
\begin{enumerate}[(i)]
    \item If $N > p$ and  $q \in \left[1, \pcr \right]$, then $\wp \hookrightarrow L^{q}(\pa \Om),$ i.e., there exists $C = C(N,p) > 0$ satisfying 
           \begin{align*}
                \|\phi \|_{L^{q}(\pa \Om)} \leq C \| \phi \|_{\wp}, \quad \forall \phi \in \wp.   
            \end{align*}
          If $q \neq \pcr,$ then the above embedding is compact.
    \item If $N=p$ and  $q \in \left[ 1, \infty \right)$, then $\wp \hookrightarrow L^q(\pa \Om),$ i.e., there exists $C = C(N) > 0$ satisfying 
            \begin{align*}
                  \|\phi \|_{L^q(\pa \Om)} \leq C \| \phi \|_{\wp}, \quad \forall \phi \in \wp,   
            \end{align*}
           and the above embedding is compact.
\end{enumerate}
\end{proposition}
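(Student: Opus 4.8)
This is a classical fact --- precisely \cite[Theorems 4.2, 4.6 and 6.2]{Nevcas} --- and I would prove it by localization at the boundary followed by a one--dimensional estimate in the model half--space. Since $\pa\Om$ is compact and Lipschitz, cover it by finitely many open sets $U_j$ in each of which, after a rigid motion, $\pa\Om\cap U_j$ is the graph of a Lipschitz function with $\Om\cap U_j$ on one side; fix a subordinate partition of unity $\{\chi_j\}$ and write $\phi=\sum_j\chi_j\phi$. Since $C^1(\overline{\Om})$ is dense in $\wp$ for Lipschitz $\Om$, it suffices to estimate $\|\chi_j\phi\|_{L^q(\pa\Om\cap U_j)}$ by $\|\chi_j\phi\|_{\wp}$ for $\phi\in C^1(\overline{\Om})$, and a bi-Lipschitz change of variables flattening each graph turns this into the model estimate for a function $u\in C^1_c(\overline{\R^N_+})$, the change of variables distorting all norms involved by at most bounded factors.

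For the model estimate one writes, for $x'\in\R^{N-1}$,
\[
|u(x',0)|^q=-q\int_0^\infty |u(x',t)|^{q-2}u(x',t)\,\pa_t u(x',t)\,\dt,
\]
integrates in $x'$ and applies H\"older's inequality in $t$ and then in $x'$; the resulting right--hand side is controlled by $\|\Gr u\|_{L^p(\R^N_+)}$ and $\|u\|_{L^{s}(\R^N_+)}$ for a suitable $s$, and one closes the estimate with the Sobolev inequality on the half--space (equivalently: prove the case $p=1$ directly from the fundamental theorem of calculus and Fubini, then apply it to $|u|^{t}$ and use H\"older to reach general $p$). The choice $q=\pcr$ for $N>p$ makes the exponents balance exactly; for $1\le q<\pcr$ one interpolates between $L^1(\pa\Om)$ and $L^{\pcr}(\pa\Om)$, using $\hm(\pa\Om)<\infty$; and for $N=p$ every finite $q$ is reached directly. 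This yields (i) and the boundedness part of (ii).

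For compactness when $q$ is subcritical I would first establish $\wp\hookrightarrow\hookrightarrow L^p(\pa\Om)$. Taking $q=p$ in the identity above and integrating over a $\de$--collar $\Om_\de$ of $\pa\Om$ inside $\Om$ gives, for every small $\de>0$, a constant $C_\de$ with
\[
\|v\|_{L^p(\pa\Om)}^p\le \de\,\|\Gr v\|_{L^p(\Om_\de)}^p+C_\de\,\|v\|_{L^p(\Om_\de)}^p,\qquad v\in\wp.
\]
Applied to differences $v=\phi_n-\phi_m$ of a bounded sequence in $\wp$, the first term is made uniformly small by shrinking $\de$, while for fixed $\de$ the Rellich--Kondrachov theorem ($\wp\hookrightarrow\hookrightarrow L^p(\Om)$) forces the second term to $0$; hence $\{\phi_n|_{\pa\Om}\}$ is Cauchy in $L^p(\pa\Om)$. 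For $p\le q<\pcr$ one then writes $\|\phi_n\|_{L^q(\pa\Om)}\le\|\phi_n\|_{L^p(\pa\Om)}^{1-\theta}\|\phi_n\|_{L^{\pcr}(\pa\Om)}^{\theta}$ and combines the strong $L^p(\pa\Om)$--convergence with boundedness in the critical space; for $q<p$ compactness is immediate from H\"older since $\hm(\pa\Om)<\infty$. The same argument gives compactness for all finite $q$ when $N=p$. The only genuinely delicate point is carrying out the localization and flattening while respecting the merely Lipschitz regularity of $\pa\Om$: the flattening maps are only a.e.\ differentiable, so one must check that the chain rule and the change--of--variables formula remain valid on $\wp$ --- all the analysis downstream of this reduction is routine.
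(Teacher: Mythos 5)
The paper does not prove this proposition at all: it is taken directly from Ne\v{c}as's book \cite[Theorems 4.2, 4.6, 6.2]{Nevcas}, so there is no in-paper argument to compare yours against. Your sketch is the canonical proof and is essentially correct: localize via a finite Lipschitz atlas and a subordinate partition of unity, flatten to the half-space, obtain the boundary $L^q$ bound from the one-dimensional identity $|u(x',0)|^q=-q\int_0^\infty|u|^{q-2}u\,\pa_t u\,\dt$ followed by H\"older and the half-space Sobolev inequality (the exponent count $(q-1)p'\le Np/(N-p)$ indeed reproduces the critical value $q=\pcr$), and derive subcritical compactness from the $\de$-collar estimate together with Rellich--Kondrachov in $\Om$ and Lyapunov interpolation on the finite-measure set $\pa\Om$. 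Two small points worth tidying: the displayed power identity requires $q>1$ (for $q=1$ use $|u(x',0)|=-\int_0^\infty\pa_t|u|\,\dt$), and in the interpolation step for $p<q<\pcr$ the inequality must be applied to differences $\phi_n-\phi_m$ rather than to $\phi_n$ itself to conclude Cauchyness in $L^q(\pa\Om)$. You are also right that the only delicate point is the flattening with merely Lipschitz charts; the needed chain rule and change of variables for bi-Lipschitz maps acting on $W^{1,p}$ are standard and are exactly what Ne\v{c}as establishes.
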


The following embeddings are due to Cianchi et al. \cite[Theorem 1.3]{CianchiPick} that extends the classical trace embeddings to the Lebesgue spaces with the finer embeddings to the Lorentz-Zygmund spaces.

\begin{proposition}[Finer trace embeddings]\label{Cianchi}
Let $N \geq 2$ and let  $\Om$ be a Lipschitz  bounded domain in $\R^N$. Let $p\in (1,\infty)$. Then the following embeddings hold: 
\begin{enumerate}[(i)]
    \item If $N > p$, then $\wp \hookrightarrow L^{\frac{p(N-1)}{N-p},p}(\pa \Om),$ i.e., there exists $C = C(N,p) > 0$ such that 
               \begin{align*}
                   \|\phi \|_{\left(\frac{p(N-1)}{N-p},p\right)} \leq C \| \phi \|_{\wp}, \quad \forall \phi \in \wp.    
               \end{align*}
    \item If $N=p$, then $\wp \hookrightarrow L^{\infty,N;-1}(\pa \Om)$, i.e., there exists $C = C(N) > 0$ such that
                \begin{align*}
                   \|\phi \|_{(\infty,N;-1)} \leq C \| \phi \|_{\wp}, \quad \forall \phi \in \wp.    
                \end{align*}
\end{enumerate}
\end{proposition}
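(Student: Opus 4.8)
These two embeddings are quoted from \cite[Theorem 1.3]{CianchiPick}, so in the paper one simply invokes that result; but here is how I would reconstruct it. The plan has three stages: first, localize and flatten $\pa\Om$ so as to reduce everything to a trace inequality on a half-space; second, identify the trace space of $W^{1,p}$ over the half-space with a first-order Besov space on $\R^{N-1}$; third, feed that into the sharp Sobolev-type embedding of Besov spaces into Lorentz spaces (for $N>p$) or into the limiting Lorentz-Zygmund space (for $N=p$), and transfer the estimate back.

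For the first stage, since $\pa\Om$ is Lipschitz I would cover it by finitely many balls $U_1,\dots,U_m$ in each of which, after a rigid motion, $\Om\cap U_j$ is the region above the graph of a Lipschitz function $\psi_j$ and $\pa\Om\cap U_j$ is that graph, together with one interior ball; choosing a subordinate partition of unity $\{\chi_j\}$ and writing $\phi=\sum_j\chi_j\phi$, it suffices to bound each $\chi_j\phi$. The map $(x',x_N)\mapsto(x',x_N-\psi_j(x'))$ flattens $\pa\Om\cap U_j$ onto a piece of $\R^{N-1}$; it is bi-Lipschitz with Jacobian bounded above and below, hence an isomorphism of $W^{1,p}$, and, since it distorts $\hm$-measure only by constants, it is also an isomorphism (up to equivalent quasinorms) of each Lorentz-Zygmund space appearing here. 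This reduces the claim to: for $u\in W^{1,p}(\R^N_+)$ with compact support, the trace $u(\cdot,0)$ lies in $L^{\frac{p(N-1)}{N-p},p}(\R^{N-1})$ when $N>p$, resp. in $L^{\infty,N;-1}(\R^{N-1})$ when $N=p$, with the matching norm bound.

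For the second and third stages, by Gagliardo's trace theorem the trace operator maps $W^{1,p}(\R^N_+)$ boundedly into the Besov space $B^{1-1/p}_{p,p}(\R^{N-1})$ for $1<p<\infty$. When $N>p$ one has $0<1-\tfrac1p<\tfrac{N-1}{p}$, the sub-critical range, so the sharp Besov-Lorentz embedding gives $B^{1-1/p}_{p,p}(\R^{N-1})\hookrightarrow L^{r,p}(\R^{N-1})$ with $\tfrac1r=\tfrac1p-\tfrac{1-1/p}{N-1}=\tfrac{N-p}{p(N-1)}$, i.e. $r=\tfrac{p(N-1)}{N-p}$, the fine index $p$ being inherited from the Besov fine index. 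When $N=p$ one has $1-\tfrac1p=\tfrac{N-1}{p}$, the limiting case, whose sharp target is the Lorentz-Zygmund space $L^{\infty,N;-1}(\R^{N-1})$, a trace analogue of the Brezis-Wainger / Hansson-Maz'ya limiting Sobolev embedding. Composing these with the first stage and summing over $j$ yields the two inequalities. A cleaner route, and the one actually carried out in \cite{CianchiPick}, bypasses Besov spaces: a reduction principle identifies the boundary trace inequality with the boundedness of an explicit one-dimensional weighted Hardy operator acting on decreasing rearrangements over $(0,\hm(\pa\Om))$, and computing the optimal domain and target of that operator produces precisely $L^{\frac{p(N-1)}{N-p},p}$ and $L^{\infty,N;-1}$.

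I expect the real obstacle not to be the boundedness of the embeddings — that already follows from classical Besov-Sobolev theory — but rather establishing that these targets are \emph{optimal} among rearrangement-invariant spaces, which is the whole point of \cite{CianchiPick} and requires the machinery of optimal r.i.\ ranges, associate spaces and Hardy-operator duality. A smaller but still non-trivial point is verifying that the bi-Lipschitz boundary charts genuinely preserve the Lorentz-Zygmund quasinorms up to constants, which comes down to the fact that such maps comparably preserve $\hm$-measure, hence distribution functions and one-dimensional rearrangements.
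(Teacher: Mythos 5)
The paper itself offers no proof of this proposition: it is stated verbatim as a citation to Cianchi, Kerman, and Pick \cite[Theorem~1.3]{CianchiPick}, exactly as you observe at the outset. So there is no paper-proof to compare against line by line; what you have supplied is a plausible reconstruction, and it is worth noting how it relates to the strategy in the cited source.

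Your three-stage route --- localize and flatten via a Lipschitz atlas, pass through the Gagliardo trace identification $\mathrm{Tr}: W^{1,p}(\R^N_+)\to B^{1-1/p}_{p,p}(\R^{N-1})$, then apply the sharp sub-critical Besov--Lorentz embedding (for $N>p$) or the critical Brezis--Wainger/Hansson-type Lorentz--Zygmund embedding (for $N=p$) --- is a correct and classical way to obtain the two boundedness statements, and your exponent bookkeeping checks out: for $N>p$ one finds $\tfrac{1}{r}=\tfrac{1}{p}-\tfrac{1-1/p}{N-1}=\tfrac{N-p}{p(N-1)}$ with fine index $p$, and for $N=p$ the smoothness $1-\tfrac1N=\tfrac{N-1}{N}$ is exactly critical for dimension $N-1$, landing in $L^{\infty,N;-1}$. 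Your remark that the bi-Lipschitz charts commute (up to constants) with the rearrangement, because they distort $\mathcal H^{N-1}$-measure only by bounded factors, is the right observation to make the flattening legitimate for Lorentz--Zygmund quasinorms. This is, however, a genuinely different argument from what Cianchi--Kerman--Pick actually carry out: their paper proves a reduction theorem expressing the boundary trace embedding as boundedness of an explicit one-dimensional Hardy-type operator on decreasing rearrangements, and then determines the optimal rearrangement-invariant target by a duality/associate-space analysis, which is what yields sharpness. Your Besov route buys a shorter path to the inequalities themselves (which is all this paper uses), but it does not by itself establish that $L^{\frac{p(N-1)}{N-p},p}$ and $L^{\infty,N;-1}$ are optimal among r.i.\ spaces --- a point you correctly flag as the real technical content of \cite{CianchiPick}, and one that is immaterial for the use of the proposition here. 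No gap, then, relative to what the statement and the paper actually require.
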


The above finer trace embeddings help us to get the weighted trace inequality for a class of weight functions defined on the boundary.

\begin{proposition}\label{Hardy boundary}
(i) Let $N > p$ and $g \in L^{\frac{N-1}{p-1}, \infty}(\pa \Om)$. Then there exists  a constant $C = C(N,p) > 0$ satisfying \begin{align}\label{N>p}
         \dispa \abs{g} \abs{\phi}^p \leq C \norm{g}_{\left(\frac{N-1}{p-1}, \infty \right)} \norm{\phi}^p_{\wp}, \quad \forall \phi \in \wp.   
\end{align}
\noi (ii) Let $N =p$ and $g \in L^{1, \infty;N}(\pa \Om)$. Then there exists a constant $C = C(N) > 0$ satisfying 
\begin{align}\label{N=p}
          \dispa \abs{g} \abs{\phi}^p \leq C \norm{g}_{(1, \infty;N)} \norm{\phi}^p_{\wp}, \quad \forall \phi \in \wp.
\end{align}
\end{proposition}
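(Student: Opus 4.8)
The plan is to reduce both inequalities, via the one–dimensional decreasing rearrangement, to an elementary weighted integral estimate on $(0,L)$ with $L:=\hm(\pa\Om)$, and then to close that estimate using the sharp trace embeddings of Cianchi et al. (Proposition \ref{Cianchi}). The point is that the hypothesis on $g$ furnishes exactly the pointwise decay of $g^*$ that is dual to the weight appearing in the target space of the relevant trace embedding, so that no compactness is needed.

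First I would apply the Hardy--Littlewood inequality (Proposition \ref{HL}) to the pair $|g|$, $|\phi|^p$, together with the identity $(|\phi|^p)^*=(\phi^*)^p$, to obtain
\[
  \dispa \abs{g}\abs{\phi}^p\,\dsg \;\le\; \int_0^{L} g^*(t)\,\bigl(\phi^*(t)\bigr)^p\,\dt .
\]
This is the only step that uses the geometry of $\pa\Om$; everything after is a computation on $(0,L)$. For $N>p$, the assumption $g\in L^{\frac{N-1}{p-1},\infty}(\pa\Om)$ gives the pointwise bound $g^*(t)\le \norm{g}_{(\frac{N-1}{p-1},\infty)}\,t^{-\frac{p-1}{N-1}}$, and since $-\tfrac{p-1}{N-1}=p\cdot\tfrac{N-p}{p(N-1)}-1$, inserting this bound rewrites the right-hand side as
\[
  \norm{g}_{(\frac{N-1}{p-1},\infty)}\int_0^{L}\Bigl[t^{\frac{N-p}{p(N-1)}}\phi^*(t)\Bigr]^p\frac{\dt}{t}
  \;=\;\norm{g}_{(\frac{N-1}{p-1},\infty)}\,\abs{\phi}^p_{\left(\frac{p(N-1)}{N-p},\,p\right)},
\]
which, using $\phi^*\le\phi^{**}$ and Proposition \ref{Cianchi}(i), is at most $C\,\norm{g}_{(\frac{N-1}{p-1},\infty)}\norm{\phi}^p_{\wp}$; this is \eqref{N>p}. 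For $N=p$ the argument is the same with a logarithmic weight: $g\in L^{1,\infty;N}(\pa\Om)$ yields $g^*(t)\le \norm{g}_{(1,\infty;N)}\,t^{-1}\bigl(l_1(t)\bigr)^{-N}$, so, using $p=N$,
\[
  \int_0^{L} g^*(t)\bigl(\phi^*(t)\bigr)^p\,\dt
  \;\le\;\norm{g}_{(1,\infty;N)}\int_0^{L}\bigl(l_1(t)\bigr)^{-N}\bigl(\phi^*(t)\bigr)^N\frac{\dt}{t}
  \;=\;\norm{g}_{(1,\infty;N)}\,\abs{\phi}^N_{(\infty,N;-1)},
\]
and Proposition \ref{Cianchi}(ii) bounds $\abs{\phi}_{(\infty,N;-1)}$ by $C\norm{\phi}_{\wp}$, giving \eqref{N=p}.

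The hard part is not any single step but the bookkeeping of exponents: one must check that the decay of $g^*$ forced by the hypothesis ($t^{-\frac{p-1}{N-1}}$ when $N>p$, and $t^{-1}\bigl(l_1(t)\bigr)^{-N}$ when $N=p$) is precisely conjugate to the weight $t^{\frac{N-p}{p(N-1)}}$ (respectively $\bigl(l_1(t)\bigr)^{-1}$) defining the Cianchi--Kerman--Pick target space, so that the integrand collapses exactly to the $p$-th power of the (quasi)norm of $\phi$ there; this is exactly why the Lorentz exponent $\frac{N-1}{p-1}$ and the Lorentz--Zygmund space $L^{1,\infty;N}$ are the natural choices. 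It is worth stressing that although none of the embeddings in Proposition \ref{Cianchi} is compact, compactness plays no role here — only the boundedness of the trace embeddings and the Hardy--Littlewood inequality are used.
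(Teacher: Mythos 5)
Your proof is correct. The mechanism — Hardy--Littlewood to pass to rearrangements, a weak-type pointwise bound on $g^*$ from the hypothesis, exponent bookkeeping to recognize the Cianchi--Kerman--Pick quasinorm of $\phi$, and then the finer trace embedding — is sound, and the exponent arithmetic checks out in both cases. You do take a more self-contained route than the paper: for (i) the paper invokes the generalized H\"older inequality for Lorentz spaces (Proposition~\ref{Lorentz properties}(i)) together with the power identity $\norm{\abs{\phi}^p}_{(N-1)/(N-p),1}=\norm{\phi}^p_{p(N-1)/(N-p),p}$, and for (ii) it invokes the duality estimate in Proposition~\ref{properties}(i)--(ii); you instead unwind both of these by hand from Hardy--Littlewood and the pointwise decay of $g^*$. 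The two approaches are equivalent in substance (the cited lemmas are themselves proved via Hardy--Littlewood), so this is a matter of presentation: yours is more elementary and makes visible exactly why the Lorentz exponent $\tfrac{N-1}{p-1}$ and the Zygmund index $N$ are conjugate to the Cianchi--Kerman--Pick target weights, whereas the paper's version is shorter because it reuses general machinery. One small point you should make explicit: the pointwise bound you derive is $g^*(t)\le |g|_{(d,\infty;\alpha)}\,t^{-1/d}(l_1(t))^{-\alpha}$ with the quasinorm $|\cdot|$ built from $g^*$, and you then pass to the norm $\norm{\cdot}$ built from $g^{**}$ appearing in the statement; this step is harmless because $g^*\le g^{**}$ gives $|g|_{(d,\infty;\alpha)}\le\norm{g}_{(d,\infty;\alpha)}$ (and in case (i) the two are in fact equivalent since $d=\tfrac{N-1}{p-1}>1$), but it is worth saying.
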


\begin{proof}
(i) For $\phi \in \wp$, by the generalized H\"{o}lder inequality (Proposition \ref{Lorentz properties}) and Proposition \ref{Lorentz properties},  we obtain  
\begin{align*}
        \dispa \abs{g} \abs{\phi}^p \leq  \norm{g}_{\left(\frac{N-1}{p-1}, \infty \right)} \norm{\abs{\phi}^p}_{\left(\frac{N-1}{N-p},1 \right)}   =  \norm{g}_{\left( \frac{N-1}{p-1}, \infty \right)} \norm{\phi}^p_{\left(\frac{p(N-1)}{N-p},p \right)}. 
\end{align*}
Now using the finer trace embeddings (Proposition \ref{Cianchi}), we get 
\begin{align*}
        \dispa \abs{g} \abs{\phi}^p \leq C \norm{g}_{\left(\frac{N-1}{p-1}, \infty \right)} \norm{\phi}_{\wp}, \quad \forall \phi \in \wp,    
\end{align*}
where $C = C(N,p)$ is the embedding constant.

\noi (ii) For $\phi \in \wN$, using Proposition \ref{properties}, we obtain
\begin{align*}
        \dispa \abs{g} \abs{\phi}^N \leq  \norm{g}_{(1, \infty;N)} \norm{\abs{\phi}^N}_{(\infty, 1;-N)} & \leq C  \norm{g}_{(1, \infty;N)} \norm{\phi}^N_{(\infty, N;-1)}.
\end{align*}
Again using the finer trace embeddings, 
\begin{align*}
         \dispa \abs{g} \abs{\phi}^N \leq C  \norm{g}_{(1, \infty;N)} \norm{\phi}^N_{\wN}, \quad \forall \phi \in \wN,
\end{align*}
where $C = C(N) > 0$ is the embedding constant given in Proposition \ref{Cianchi}.
\end{proof}

\subsection{Degree}
We define the degree for certain class of maps from $\wp$ to it's dual $(\wp)'$. For more details on this topic, we refer to \cite{Browder, Skrypnik}.

\begin{definition}  Let $D \subset \wp$ be a set and let $F: D \rightarrow (\wp)'$ be a map. \begin{enumerate}[(i)] 
   \item  \it{\textbf {Demicontinuous}}: $F$ is said to be demicontinuous on $D$, if for any sequence $(\phi_n) \subset D$ such           that $\phi_n\rightarrow \phi_0$, then 
        $\displaystyle \lim_{n\rightarrow \infty} \left< F(\phi_n), \; \upsilon \right> = \left< F(\phi_0), \upsilon \right>,\; \forall \upsilon \in \wp.$ 
   \item \it{ \textbf{Class  $\al(D)$}}: $F$ is said to be in class $\al(D),$ if every sequence $(\phi_n)$ in $D$  satisfying  $\phi_n \rightharpoonup \phi_0$ and 
  $
\uplim_{n \rightarrow \infty} \left< F(\phi_n), \phi_n - \phi_0 \right> \leq 0,$ converges  to some $\phi_0$ in $D$. 
    \item  For $F \subset  \overline{D},$  $A(D,F)$ denotes the set of all bounded, demicontinuous map defined on   $\overline{D}$ that satisfies the class $\al(F).$ 
    \item  \it{ \textbf{Isolated zero}}: A point $\phi_0 \in D$ is called an isolated zero of $F$, if $F(\phi_0) = 0$ and there exists $r > 0$ such that the ball $B_{r}(\phi_0) $ (where $ \overline{B_{r}(\phi_0}) \subset D$) does not contain any other zeros of $F$.
    \item \it{\textbf{Degree}}: Let $F \in A(D,\pa D)$ satisfying $F(\phi) \neq 0$ for every $\phi \in \pa D.$ Let $(\upsilon_i)$ be a Schauder basis for $\wp$ and let $V_n = span\{\upsilon_1,..., \upsilon_n\}.$ A finite-dimensional approximation $F_n$ of $F$ with respect to $V_n$ is defined as: $$ F_n(\phi) = \sum_i^n \left< F(\phi),\upsilon_i \right> \upsilon_i, \; \text{for} \; \phi \in \overline{D_n}, \; \text{where} \; D_n = D \cap V_n.$$
    From \cite[Theorem 2.1]{Skrypnik}, $F_n(\phi) \neq 0$ for every $\phi \in \pa D_n$, the degree $deg(F_n, \overline{D_n}, 0)$ of $F_n$ with respect to  $0 \in V_n$ is well defined and independent of $n$. Further from \cite[Theorem 2.2]{Skrypnik}, $\lim_{n\rightarrow \infty} deg(F_n, \overline{D_n},0)$ is independent of basis $(v_i)$. Now the degree of $F$ with respect to $0 \in (\wp)'$ is defined as $$ deg(F, \overline{D}, 0) = \lim_{n\rightarrow \infty} deg(F_n, \overline{D_n},0).$$ 
    \item \it{\textbf{Homotopy}}: Let $F, G \in  A(D,\pa D)$ satisfying $F(\phi), G(\phi) \neq 0$ for every $\phi \in \pa D.$ The mapping $F$ and $G$ is said to be homotopic on $\overline{D}$, if there exists a sequence of one parameter family  $H_t: \overline{D} \rightarrow (\wp)'$, $t \in [0,1]$ such that $H_0 = F$ and $H_1 = G$ and $H_t$ satisfies the following:
    \begin{enumerate}[(a)]
        \item For $t \in [0,1]$, $H_t \in A(D, \pa D)$ and $H_t(\phi) \neq 0$ for  every $\phi \in \pa D.$ 
        \item For a sequence $t_n \in [0,1]$ satisfying $t_n \rightarrow t$ and  a sequence $\phi_n \in \overline{D}$ satisfying $\phi_n \rightarrow \phi_0,$  $H_{t_n}\phi_n \rightharpoonup H_{t} \phi_0$ as $n \rightarrow \infty$. \end{enumerate}
    \item \it{\textbf{Index}}: Let $F \in A(D, \overline{D})$ and let $\phi_0$ be an isolated zero of $F$. Then the index of a map $F$ is defined as  $ ind(F, \phi_0) =  \displaystyle\lim_{r\rightarrow 0} deg(F, \overline{B_{r}(\phi_0)}, 0).$ 
    \item  \it{\textbf{Potential operator}}: A map $F \in A(D,(\wp)')$ is called a potential operator, if there exists a functional $f: \wp \rightarrow \R$ such that $f^{\prime}(\phi) = F(\phi), \; $ for all $\phi \in \wp.$
\end{enumerate}
\end{definition}

The following Proposition is proved in \cite{Skrypnik} (Theorem 4.1, Theorem 4.4, Theorem 5.1, and Theorem 6.1).

\begin{proposition}\label{degree}  $(i)$ Let $F, G \in A(D, \pa D)$ satisfying $F(\phi), G(\phi) \neq 0$ for every $\phi \in \pa D.$ If $F$ and $G$ are homotopic in $\overline{D},$ then 
$deg(H_t, \overline{D}, 0) = C, \; \forall t \in [0,1].$ In particular, $deg(F, \overline{D}, 0) = deg(G, \overline{D}, 0)$.\vspace{0.3 cm} \\
$(ii)$ Let $F \in A(D, \pa D).$ Suppose that $0 \in \overline{D} \setminus \pa D$ and $\displaystyle \left< F(\phi), \phi \right> \geq 0,\; F(\phi) \neq 0$ for $\phi \in \pa D.$ Then $deg(F, \overline{D},0) = 1.$   \vspace{0.3 cm} \\
\noi $(iii)$ Let $F \in A(D, \overline{D})$ satisfying $F(\phi) \neq 0$, for every $\phi \in \pa D.$ If $F$ has only finite number of isolated zeros in $\overline{D},$ then $$ deg(F, \overline{D}, 0) = \sum_{i = 1}^n ind(F, \phi_i),$$ where $\phi_i (i = 1,...,n)$ are all zeros of $F$ in $D$.  \vspace{0.3 cm} \\
\noi $(iv)$ Let $F \in A(D,(\wp)')$ be a potential operator. Suppose that the point $\phi_0$  is a local minimum of $f$ and it is an isolated zero of $F$. Then $ind(F, \phi_0) = 1.$

\end{proposition}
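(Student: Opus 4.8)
All four assertions are the class-$\al$ counterparts of classical properties of the Brouwer degree, and I would obtain each by pushing the corresponding finite-dimensional fact through the Galerkin limit built into the definition $\deg(F,\overline D,0)=\lim_{n\to\infty}\deg(F_n,\overline{D_n},0)$, where $F_n(\phi)=\sum_{i=1}^n\langle F(\phi),\upsilon_i\rangle\upsilon_i$ on $\overline{D_n}=\overline{D\cap V_n}$. The one tool used throughout is an \emph{admissibility lemma}: if $\phi_n\in\partial D_n$ and $F_n(\phi_n)\to 0$ (equivalently $\langle F(\phi_n),\upsilon_i\rangle\to 0$ for each fixed $i$), then along a subsequence $\phi_n\rightharpoonup\phi_0$; expanding $\phi_n-\phi_0$ in the Schauder basis and using the boundedness of $(F(\phi_n))$ together with $\langle F(\phi_n),\upsilon_i\rangle\to 0$ gives $\limsup_n\langle F(\phi_n),\phi_n-\phi_0\rangle\le 0$, so the class-$\al$ property forces $\phi_n\to\phi_0$ and demicontinuity gives $F(\phi_0)=0$ with $\phi_0\in\partial D$ --- contradicting $F\ne 0$ on $\partial D$. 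Hence $F_n\ne 0$ on $\partial D_n$ for $n$ large (so the Brouwer degrees are defined), and a telescoping-homotopy version of the same argument shows the sequence is eventually constant; I take this, together with basis independence, as given from \cite[Theorems 2.1, 2.2]{Skrypnik}.

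For \emph{(i)}, form $(H_t)_n$ and apply the admissibility lemma to sequences $t_n\to t$, $\phi_n\in\partial D_n$ with $(H_{t_n})_n(\phi_n)\to 0$: condition (b) supplies the joint convergence $H_{t_n}\phi_n\rightharpoonup H_t\phi_0$ needed to pass to the limit, and condition (a) the class-$\al$ property, so there is $n_0$ with $(H_t)_n(\phi)\ne 0$ for all $\phi\in\partial D_n$, $t\in[0,1]$, $n\ge n_0$. For each such $n$, Brouwer homotopy invariance makes $\deg((H_t)_n,\overline{D_n},0)$ independent of $t$; letting $n\to\infty$ gives $\deg(H_t,\overline D,0)\equiv C$, and $t=0,1$ yields $\deg(F,\overline D,0)=\deg(G,\overline D,0)$. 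For \emph{(ii)}, let $J\colon\wp\to(\wp)'$ be the duality-type map $\langle J\phi,v\rangle=\dis\bigl(\abs{\Gr\phi}^{p-2}\Gr\phi\cdot\Gr v+\abs{\phi}^{p-2}\phi v\bigr)\dx$, which is bounded, demicontinuous, monotone and of class $\al$, with $\langle J\phi,\phi\rangle=\norm{\phi}_{\wp}^{p}$; since $0\in\overline D\setminus\partial D$ we have $\norm{\phi}_{\wp}>0$ on $\partial D$. The affine homotopy $H_t=(1-t)F+tJ$ lies in $A(D,\partial D)$ --- boundedness and demicontinuity are clear, and the class-$\al$ property follows because monotonicity of $J$ gives $\liminf_n\langle J\phi_n,\phi_n-\phi_0\rangle\ge 0$ along weakly convergent sequences --- and it is nonvanishing on $\partial D$: for $t\in(0,1]$, $\langle H_t\phi,\phi\rangle\ge t\norm{\phi}_{\wp}^{p}>0$ forces $H_t\phi\ne 0$, and $H_0=F\ne 0$ on $\partial D$ by hypothesis (the assumption $\langle F\phi,\phi\rangle\ge 0$ is what keeps the angle condition from degenerating as $t\to 0$). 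By (i), $\deg(F,\overline D,0)=\deg(J,\overline D,0)$; in the coordinates of $V_n$, $\langle J\phi,\phi\rangle>0$ on $\partial D_n$ says $J_n$ makes an acute angle with the position vector, so by the Poincar\'e--Bohl/Kronecker theorem (using $0\in D_n\setminus\partial D_n$), $\deg(J_n,\overline{D_n},0)=1$, whence $\deg(F,\overline D,0)=1$.

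For \emph{(iii)}, list the zeros $\phi_1,\dots,\phi_m$ of $F$ in $D$ and pick $r>0$ with $\overline{B_r(\phi_i)}\subset D$ pairwise disjoint and containing no other zero. On $D_0:=D\setminus\bigcup_i B_r(\phi_i)$, $F$ has no zero on $\overline{D_0}$, so the admissibility lemma gives $\deg(F,\overline{D_0},0)=0$; Brouwer additivity at the $V_n$ level --- for $n$ large every zero of $F_n$ clusters near some $\phi_i$, again by the admissibility lemma --- then gives $\deg(F,\overline D,0)=\sum_{i=1}^m\deg(F,\overline{B_r(\phi_i)},0)$, and letting $r\to 0$ turns each summand into $\mathrm{ind}(F,\phi_i)$. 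For \emph{(iv)}, normalize $\phi_0=0$, $f(0)=0$. First, $0$ is automatically a \emph{strict} local minimizer: were $f(\psi_n)=0$ for some $\psi_n\to 0$, $\psi_n\ne 0$, each $\psi_n$ would be a local minimizer, hence a zero of $F=f'$, contradicting isolation. Using the class-$\al$ (equivalently $(S)_+$) condition to supply Palais--Smale-type compactness, one produces a radius $r>0$ with $0$ the only zero of $F$ in $\overline{B_r}$ and $\inf_{\partial B_r}f>0$; then $F$ is deformed on $\overline{B_r}$, through maps nonvanishing on $\partial B_r$ built from the (pseudo-)gradient flow of $f$ exactly as in finite-dimensional critical point theory, down to the duality map $J$ --- equivalently, $\deg(F,\overline{B_r},0)$ equals the alternating sum of the ranks of the critical groups of $f$ at $0$, which for an isolated local minimum vanish except in degree $0$, where the rank is $1$. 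Either way $\mathrm{ind}(F,0)=\deg(J,\overline{B_r},0)=1$.

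The crux of the whole proof is the admissibility lemma in infinite dimensions: converting ``$F\ne 0$ on $\partial D$'' into nonvanishing of the finite-dimensional sections $(H_t)_n$ on $\partial D_n$ that is uniform in $t\in[0,1]$ and in all large $n$. This is exactly where the class-$\al$ hypothesis is indispensable and is what legitimizes the Galerkin limit. The genuinely delicate point is \emph{(iv)}: the naive idea of homotoping $F$ linearly to $J$ fails, because $\langle f'(\phi),\phi-\phi_0\rangle$ need not be nonnegative on small spheres, so one must either run the deformation lemma for gradients of class-$\al$ functionals --- where the compactness above re-enters --- or develop the Morse-theoretic computation of the critical groups of an isolated local minimum; carrying this out for merely demicontinuous maps is the heaviest piece of machinery involved.
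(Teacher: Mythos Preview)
The paper does not prove this proposition at all: immediately before the statement it writes ``The following Proposition is proved in \cite{Skrypnik} (Theorem 4.1, Theorem 4.4, Theorem 5.1, and Theorem 6.1)'' and leaves it at that. So there is no argument in the paper to compare your proposal against; what you have written is strictly more than the paper provides.

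That said, your sketch is the right one and tracks Skrypnik's own development closely. The ``admissibility lemma'' you isolate is exactly the mechanism behind \cite[Theorem~2.1]{Skrypnik}, and your treatments of (i)--(iii) are the standard Galerkin reductions. For (ii) your affine homotopy $H_t=(1-t)F+tJ$ to a monotone, class-$\al$ duality map is the usual route; the verification that $H_t$ stays in class $\al(\pa D)$ for $t\in[0,1)$ via $\liminf_n\langle J\phi_n,\phi_n-\phi_0\rangle\ge 0$ is correct, and at $t=1$ one simply uses that $J$ itself is of class~$\al$. For (iv) you rightly flag the genuine difficulty: passing from ``isolated local minimum'' to a sphere on which $f$ is bounded away from its minimum value is not automatic in infinite dimensions and needs the $(S)_+$ compactness; both cures you name (a deformation/mountain-pass argument adapted to demicontinuous potential maps, or the critical-group computation) are the ones Skrypnik uses. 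One small caution: the step ``$\inf_{\partial B_r}f>0$ for some small $r$'' does not follow from strict minimality alone --- you need to argue by contradiction through a sequence $\psi_n\in\partial B_r$ with $f(\psi_n)\to 0$, extract a weakly convergent subsequence, and use the class-$\al$ property together with $F(\psi_n)\to 0$ (which comes from an Ekeland-type step) to force $\psi_n\to 0$, contradicting $\|\psi_n\|=r$. You gesture at this (``Palais--Smale-type compactness''), but it is worth spelling out, since it is precisely the place where the argument would collapse for a merely monotone or pseudomonotone map.
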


\section{Functional framework}

In this section, we set up a suitable functional framework for our problem. We consider the following functional on $\wp$:
$$  
G(\phi)=\dispa g |\phi|^p,  \quad \forall \phi \in \wp.
$$
For $g \in L^{\frac{N-1}{p-1}, \infty}(\pa \Om)$ (if $N > p$) and $g \in L^{1, \infty;N}(\pa \Om)$ (if $N=p$), Proposition \ref{Hardy boundary} ensures that $G$ is well defined. Now we study the continuity, compactness and differentiability of $G$.

\begin{proposition}\label{G cont}
Let $$ g \in \left\{\begin{array}{ll}
             L^{\frac{N-1}{p-1}, \infty}(\pa \Om) & \text{  for } N > p,\\
             L^{1, \infty; N}(\pa \Om) & \text{  for } N = p.
            \end{array}\right.$$ 
Then $G$ is continuous.
\end{proposition}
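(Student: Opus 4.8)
The plan is to show that $G$ is continuous on $\wp$ by taking a sequence $\phi_n \ra \phi$ in $\wp$ and proving $G(\phi_n) \ra G(\phi)$. The natural route is to pass through the almost-everywhere convergence of (a subsequence of) the traces on $\pa \Om$ together with a dominated-convergence-type argument, where the dominating function is controlled by the weighted trace inequality of Proposition \ref{Hardy boundary}. Concretely, I would first argue by contradiction: if $G$ were not continuous at $\phi$, there is $\eps_0 > 0$ and a sequence $\phi_n \ra \phi$ in $\wp$ with $\abs{G(\phi_n) - G(\phi)} \geq \eps_0$ for all $n$. It suffices to extract a subsequence along which $G(\phi_n) \ra G(\phi)$ to reach a contradiction, so we are free to pass to subsequences at will.

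For the main estimate, write $G(\phi_n) - G(\phi) = \dispa g\left(\abs{\phi_n}^p - \abs{\phi}^p\right)$. By the continuity of the trace operator $\wp \hookrightarrow L^1(\pa \Om)$ (the endpoint case $q = 1$ of Proposition \ref{classical}), $\phi_n \ra \phi$ in $L^1(\pa\Om)$; hence a subsequence, still denoted $\phi_n$, converges to $\phi$ $\hm$-a.e.\ on $\pa \Om$, and is dominated by some $w \in L^1(\pa\Om)$. Then $\abs{\phi_n}^p - \abs{\phi}^p \ra 0$ $\hm$-a.e.\ on $\pa\Om$. The issue is that $\abs{g}\abs{w}^p$ need not be integrable, so a crude domination fails; instead I would use the elementary inequality $\bigl|\,\abs{a}^p - \abs{b}^p\,\bigr| \leq p\,\abs{a-b}\bigl(\abs{a}^{p-1} + \abs{b}^{p-1}\bigr)$ and the generalized H\"older inequality in Lorentz(-Zygmund) spaces (Proposition \ref{Lorentz properties}, resp.\ Proposition \ref{properties}) to get, for $N>p$,
\begin{align*}
\abs{G(\phi_n) - G(\phi)} \leq C\,\norm{g}_{\left(\frac{N-1}{p-1},\infty\right)} \norm{\phi_n - \phi}_{\left(\frac{p(N-1)}{N-p},p\right)} \left(\norm{\phi_n}^{p-1}_{\left(\frac{p(N-1)}{N-p},p\right)} + \norm{\phi}^{p-1}_{\left(\frac{p(N-1)}{N-p},p\right)}\right),
\end{align*}
after splitting the product $g\,(\phi_n-\phi)\,(\abs{\phi_n}^{p-1}+\abs{\phi}^{p-1})$ by H\"older with exponents matching $\frac{p-1}{N-1} + \frac{N-p}{p(N-1)} + \frac{(p-1)(N-p)}{p(N-1)} = 1$ on the first Lorentz index, and $\infty, p, p'$ on the second. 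The finer trace embedding (Proposition \ref{Cianchi}) bounds each $\norm{\cdot}_{(p(N-1)/(N-p),p)}$ by $\norm{\cdot}_\wp$, so the right-hand side is $\leq C' \norm{\phi_n - \phi}_\wp \bigl(\norm{\phi_n}_\wp^{p-1} + \norm{\phi}_\wp^{p-1}\bigr)$; since $\norm{\phi_n}_\wp$ is bounded and $\norm{\phi_n - \phi}_\wp \ra 0$, this tends to $0$, contradicting $\abs{G(\phi_n)-G(\phi)} \geq \eps_0$. For $N=p$ the same scheme applies, using Proposition \ref{properties}(i)--(ii) and the embedding $\wp \hookrightarrow L^{\infty,N;-1}(\pa\Om)$ in place of the $N>p$ ingredients, with $\abs{\phi_n}^{p-1}+\abs{\phi}^{p-1}$ landing in the appropriate Lorentz-Zygmund space dual-paired against $g \in L^{1,\infty;N}(\pa\Om)$.

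The step I expect to be the main obstacle is getting the H\"older splitting right in the Lorentz(-Zygmund) scale: one must verify that the three (or two) factors land in spaces whose indices add up correctly for the generalized H\"older inequality, and in the $N=p$ case that the product $\abs{\phi_n}^{p-1}+\abs{\phi}^{p-1}$ of trace functions indeed sits in a space dual to $L^{1,\infty;N}$ with the correct norm bound — this is exactly where Proposition \ref{properties}(i) (that $f \in L^{\infty,p;-1}$ implies $\abs{f}^p \in L^{\infty,1;-p}$) and Proposition \ref{properties}(ii) are used. Once the bound $\abs{G(\phi_n) - G(\phi)} \leq C\,\norm{g}\,\norm{\phi_n - \phi}_\wp\bigl(\norm{\phi_n}_\wp^{p-1} + \norm{\phi}_\wp^{p-1}\bigr)$ is in hand, continuity is immediate and, as a byprorduct, $G$ is seen to be locally Lipschitz on bounded sets; the a.e.-convergence and dominated-convergence discussion above is then not even needed, though it provides a convenient alternative if one prefers to avoid the multilinear H\"older bookkeeping.
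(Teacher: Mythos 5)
Your proof is essentially correct for $N>p$, but it takes a genuinely different route from the paper's, and for $N=p$ there is a small gap in the toolkit you invoke. The paper uses the Lieb--Loss pointwise inequality $\bigl|\,\abs{a}^p-\abs{b}^p\,\bigr|\le \eps\,\abs{b}^p+C(\eps)\,\abs{a-b}^p$, which replaces the difference by a sum of two \emph{$p$-th powers}; then Proposition~\ref{Hardy boundary} (the weighted trace inequality $\int_{\pa\Om}\abs{g}\abs{\cdot}^p\le C\norm{g}\norm{\cdot}^p_{\wp}$) applies verbatim to each term, and continuity follows with no fresh H\"older bookkeeping. By contrast, you use $\bigl|\,\abs{a}^p-\abs{b}^p\,\bigr|\le p\,\abs{a-b}\bigl(\abs{a}^{p-1}+\abs{b}^{p-1}\bigr)$ and then split the trilinear product $g\,(\phi_n-\phi)\,(\abs{\phi_n}^{p-1}+\abs{\phi}^{p-1})$ directly by the generalized H\"older inequality. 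For $N>p$ your exponent check on both Lorentz indices ($\frac{p-1}{N-1}+\frac{N-p}{p(N-1)}+\frac{(p-1)(N-p)}{p(N-1)}=1$ and $0+\frac1p+\frac1{p'}=1$) is correct, and you gain a slightly stronger conclusion (local Lipschitz continuity of $G$ on bounded sets), which the paper's $\eps$-argument does not deliver. The trade-off is that the paper's route is more economical: it reuses Proposition~\ref{Hardy boundary} rather than redoing the H\"older splitting.

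The place where your argument is underdeveloped is $N=p$. You say the ``same scheme applies, using Proposition~\ref{properties}(i)--(ii),'' but neither part of Proposition~\ref{properties} actually gives what you need: part~(i) sends a single function $f\in L^{\infty,p;-1}$ to $\abs{f}^p\in L^{\infty,1;-p}$ (the \emph{full} power, not the split power $p-1$), and part~(ii) is a two-factor duality between $L^{1,\infty;N}$ and $L^{\infty,1;-N}$. To run your trilinear H\"older splitting, you would need to show that the product $(\phi_n-\phi)\,(\abs{\phi_n}^{N-1}+\abs{\phi}^{N-1})$ lands in $L^{\infty,1;-N}(\pa\Om)$, i.e.\ you need a generalized H\"older inequality in the Lorentz--Zygmund scale (matching the exponents $0+\tfrac1N+\tfrac{N-1}{N}=1$ and $-1+(-(N-1))=-N$), which the paper never states. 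Such an inequality is true and can be obtained by applying the $n$-factor Hardy--Littlewood inequality followed by ordinary weighted H\"older on $(0,\hm(\pa\Om))$, but as written your proposal glosses over this step. The paper's use of the Lieb--Loss inequality sidesteps the issue entirely, since after the pointwise estimate it only ever needs to bound $\int_{\pa\Om}\abs{g}\abs{\psi}^N$, for which Proposition~\ref{Hardy boundary}(ii) is already in hand.
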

\begin{proof}
We only consider the case $N > p.$ For $N=p,$ the proof will follow using similar arguments.  Let $\phi_n \rightarrow \phi$ in $\wp$ and let $\ep >0$ be given. Clearly,
\begin{align*}
    \abs{G(\phi_n) - G(\phi)} \leq \dispa \abs{g}\abs{(\abs{\phi_n}^p - \abs{\phi}^p)}.
\end{align*}
 Using the inequality due to Lieb and Loss \cite[Page 22]{Lieb}, there exists $C = C(\ep, p) > 0$ such that
\begin{align*}
    \abs{(\abs{\phi_n}^p - \abs{\phi}^p)} \leq \ep \abs{\phi}^p + C \abs{\phi_n - \phi}^p \quad \text { a.e. on }  \pa \Om.
\end{align*}
Hence 
\begin{align}\label{cont1}
    \dispa \abs{g}\abs{(\abs{\phi_n}^p - \abs{\phi}^p)} \leq \ep\dispa \abs{g} \abs{\phi}^p + C \dispa \abs{g} \abs{\phi_n - \phi}^p.
\end{align}
Now using \eqref{N>p}, we obtain  
\begin{align}\label{cont2}
    \dispa \abs{g} \abs{\phi_n - \phi}^p \leq C \norm{g}_{\left( \frac{N-1}{p-1}, \infty \right)} \norm{\phi_n - \phi}^p_{\wp},
\end{align}
where $C = C(N, p) > 0$ is the embedding constant and $\p$ is the conjugate exponent of $p$. Now from \eqref{cont1} and \eqref{cont2}, we easily conclude  that $G(\phi_n) \rightarrow G(\phi)$ as $n\ra \infty$. 
\end{proof}

\begin{proposition}\label{G cpt}
Let $$g \in \left\{\begin{array}{ll}
                 \F_{\frac{N-1}{p-1}} & \text{  for } N > p,\\
                  \G_1 & \text{  for } N = p.
               \end{array}\right.$$  Then $G$ is compact.
\end{proposition}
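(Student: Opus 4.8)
The plan is to show that $G$ maps weakly convergent sequences to convergent sequences. So let $\phi_n \wra \phi$ in $\wp$; by the finer trace embeddings (Proposition \ref{Cianchi}) the sequence $(\phi_n)$ is bounded in the target Lorentz(-Zygmund) space, and after passing to a subsequence we may assume $\phi_n \ra \phi$ pointwise $\hm$-a.e.\ on $\pa \Om$ (the classical trace embedding of Proposition \ref{classical}, being compact into $L^1(\pa\Om)$, gives $L^1$-convergence along a subsequence and hence a.e.\ convergence along a further subsequence). First I would reduce, via the usual subsequence argument, to proving that this subsequence satisfies $G(\phi_n) \ra G(\phi)$; since the original sequence was arbitrary, this yields compactness.

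The heart of the argument is an equi-integrability / tightness estimate for the family $\{|g|\,|\phi_n|^p\}$ with respect to $\hm$. The point of assuming $g \in \F_{\frac{N-1}{p-1}}$ (resp.\ $g \in \G_1$) rather than merely $g \in L^{\frac{N-1}{p-1},\infty}(\pa\Om)$ (resp.\ $L^{1,\infty;N}$) is precisely Proposition \ref{F_d}(iii) (resp.\ Proposition \ref{G_d}): these membership conditions are equivalent to the vanishing of $t^{\frac{1}{d}}f^*(t)$ at the endpoints, which is exactly what is needed to make the weighted trace inequality \eqref{N>p} (resp.\ \eqref{N=p}) hold in an \emph{absolutely continuous} fashion. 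Concretely, for $N > p$: given $\eps > 0$, approximate $g$ in $L^{\frac{N-1}{p-1},\infty}(\pa\Om)$ by a bounded function $g_\eps \in \c(\pa\Om)$ with $\norm{g - g_\eps}_{(\frac{N-1}{p-1},\infty)} < \eps$. Split
\begin{align*}
\abs{G(\phi_n) - G(\phi)} \leq \dispa \abs{g - g_\eps}\,\abs{\phi_n}^p + \dispa \abs{g - g_\eps}\,\abs{\phi}^p + \abs*{\dispa g_\eps\br{\abs{\phi_n}^p - \abs{\phi}^p}}.
\end{align*}
The first two terms are bounded, using \eqref{N>p}, by $C\eps\,\norm{\phi_n}^p_{\wp}$ and $C\eps\,\norm{\phi}^p_{\wp}$, hence by $C'\eps$ uniformly in $n$ since $(\phi_n)$ is bounded. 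For the last term, $g_\eps$ is bounded and $|\phi_n|^p$ is bounded in $L^{\frac{N-1}{N-p},1}(\pa\Om) \embd L^1(\pa\Om)$; combined with the a.e.\ convergence $\phi_n \ra \phi$ and the compactness of the classical trace embedding into any $L^q(\pa\Om)$ with $q < \pcr$, one gets $\int_{\pa\Om} g_\eps(|\phi_n|^p - |\phi|^p) \ra 0$ by Vitali's convergence theorem (the equi-integrability of $\{|\phi_n|^p\}$ follows from boundedness in the slightly better Lorentz space $L^{\frac{N-1}{N-p},1}$ via Hölder). Letting $n \ra \infty$ and then $\eps \ra 0$ finishes this case. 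The case $N = p$ is identical after replacing \eqref{N>p} by \eqref{N=p}, $\F_{\frac{N-1}{p-1}}$ by $\G_1$, and invoking Propositions \ref{properties}(i)--(ii) and \ref{G_d} in place of the Lorentz Hölder inequality and Proposition \ref{F_d}(iii).

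The main obstacle is the middle step: establishing that $\int_{\pa\Om} g_\eps(|\phi_n|^p - |\phi|^p) \ra 0$, i.e.\ upgrading weak convergence in $\wp$ to strong convergence of the boundary traces in $L^p(\pa\Om)$ (or the relevant space) against a fixed bounded weight. Here one must be careful that the trace embedding $\wp \embd L^{\pcr}(\pa\Om)$ is \emph{not} compact, so the gain must come from landing in a space with a strictly larger first index or a better secondary index — that is what the approximation of $g$ by $g_\eps$ buys, reducing everything to the \emph{compact} embeddings of Proposition \ref{classical}. A clean way to package this is to first prove the auxiliary fact that the map $\phi \mapsto |\phi|^p$ is compact from $\wp$ into $L^1(\pa\Om)$ when restricted against bounded weights, then combine with the uniform smallness estimate above; alternatively one can cite Proposition \ref{F_d}(iii)/\ref{G_d} to directly produce, for each $\eps$, a sublevel set of a rearrangement on which the weight is controlled, reducing the tail to something handled by \eqref{N>p}. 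Either route is routine once the splitting is in place.
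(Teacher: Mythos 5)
Your proposal is correct and takes essentially the same approach as the paper: split $g = g_\eps + (g - g_\eps)$ with $g_\eps \in \c(\pa\Om)$ and $\norm{g - g_\eps}$ small, control the $(g - g_\eps)$ part uniformly in $n$ via the weighted trace inequality \eqref{N>p} (resp.\ \eqref{N=p}), and dispose of the $g_\eps$ part via compactness of the classical trace embedding. The paper handles the $g_\eps$ term a bit more directly — $\wp \embd L^p(\pa\Om)$ compactly, so $|\phi_n|^p \ra |\phi|^p$ in $L^1(\pa\Om)$ and pairing with the bounded $g_\eps$ is immediate — rather than routing through a.e.\ convergence plus Vitali, but that is a cosmetic difference, not a different argument.
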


\begin{proof} 
As before, we  only consider the case $N > p.$ Let $\phi_n \rightharpoonup \phi$ in $\wp$ and let  $\ep > 0$ be given. Set $L = \sup \{ \norm{ \phi_n}^p_{\wp} + \norm{\phi}^p_{\wp} \}. $ For $g \in \F_{\frac{N-1}{p-1}}$, we split $g = g_{\ep} + (g - g_{\ep})$ where $g_{\ep} \in \c(\pa \Om)$ such that $\norm{g - g_{\ep}}_{\left(\frac{N-1}{p-1}, \infty \right)} < \frac{\ep}{L}.$ Then 
\begin{align}\label{cpt1}
    \dispa \abs{g}\abs{(\abs{\phi_n}^p - \abs{\phi}^p)}  \leq \dispa \abs{g_{\ep}} \abs{(\abs{\phi_n}^p - \abs{\phi}^p)} + \dispa \abs{g - g_{\ep}} \abs{(\abs{\phi_n}^p - \abs{\phi}^p)}.
\end{align}
We estimate the second integral  of \eqref{cpt1} using \eqref{N>p} as, 
\begin{align}\label{cpt2}
    \dispa \abs{g - g_{\ep}} \abs{(\abs{\phi_n}^p - \abs{\phi}^p)} \leq C \norm{g - g_{\ep}}_{\left(\frac{N-1}{p-1}, \infty \right)}\left( \norm{\phi_n}^p_{\wp} + \norm{\phi}^p_{\wp}\right).
\end{align}
Since $\wp$ is compactly embedded into $L^p(\pa \Om)$ (Proposition \ref{classical}), there exists $n_1 \in \mathbb{N}$ such that $\int_{\pa \Om} \abs{g_{\ep}}\abs{(\abs{\phi_n}^p - \abs{\phi}^p)} < \ep, \; \forall n \geq n_1. $ Now from \eqref{cpt1} and \eqref{cpt2}, we obtain
$$ 
\dispa \abs{g}\abs{(\abs{\phi_n}^p - \abs{\phi}^p)} < (C+1) \ep, \quad \forall n \geq n_1. $$
Thus $G(\phi_n)$ converges to $G(\phi)$ as $ n\ra \infty.$ 
\end{proof}

\begin{proposition}\label{G' cpt}
Let $p \in (1, \infty).$ Let $N,g $ be given as in Proposition \ref{G cpt}. Then  $G$ is differentiable at every $\phi \in \wp$ and 
$$ 
\quad \big< G'(\phi), v \big> = p\dispa g| \phi|^{p-2} \phi v, \quad \forall v \in \wp.
$$
Moreover, the map $G'$ is compact.
\end{proposition}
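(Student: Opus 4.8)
The plan is to proceed in the classical order: first check that the candidate formula $\langle G'(\phi),v\rangle=p\int_{\pa\Om}g|\phi|^{p-2}\phi v$ defines a bounded linear functional on $\wp$, then verify that $G$ is Gateaux differentiable with this derivative, then prove that $\phi\mapsto G'(\phi)$ is compact (which automatically yields its continuity), and finally invoke the standard fact that a functional with a continuous Gateaux derivative is Fréchet differentiable. The unifying technical device throughout is to work on the measure space $(\pa\Om,|g|\,\dsg)$ and apply the \emph{ordinary} H\"older inequality there with exponents $p$ and $\p$; Proposition \ref{Hardy boundary} then converts every estimate back into the $\wp$-norm. I would present only the case $N>p$, the case $N=p$ being obtained verbatim after replacing $L^{\frac{N-1}{p-1},\infty}(\pa\Om)$, the inequality \eqref{N>p}, the density of $\c(\pa\Om)$ in $\F_{\frac{N-1}{p-1}}$, and Proposition \ref{classical}(i) by their $N=p$ counterparts.

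For well-definedness, H\"older on $(\pa\Om,|g|\,\dsg)$ gives $\int_{\pa\Om}|g|\,|\phi|^{p-1}|v|\le(\int_{\pa\Om}|g||\phi|^p)^{1/\p}(\int_{\pa\Om}|g||v|^p)^{1/p}$, which by \eqref{N>p} is bounded by $C\norm{g}_{(\frac{N-1}{p-1},\infty)}\norm{\phi}_{\wp}^{p-1}\norm{v}_{\wp}$, so $G'(\phi)\in(\wp)'$. For Gateaux differentiability, the mean value theorem gives $|t^{-1}(|\phi+tv|^p-|\phi|^p)|\le p(|\phi|+|v|)^{p-1}|v|$ for $0<|t|\le1$, and $|g|$ times this bound is integrable by the same estimate (using $(|\phi|+|v|)^p\le2^{p-1}(|\phi|^p+|v|^p)$ and \eqref{N>p}); since the difference quotient converges pointwise to $p|\phi|^{p-2}\phi v$, dominated convergence produces exactly the claimed derivative.

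The heart of the proof is compactness of $G'$. Given $\phi_n\rightharpoonup\phi$ in $\wp$ (which suffices by reflexivity), I would split $g=g_\ep+(g-g_\ep)$ with $g_\ep\in\c(\pa\Om)$ and $\norm{g-g_\ep}_{(\frac{N-1}{p-1},\infty)}$ as small as desired, exactly as in the proof of Proposition \ref{G cpt}, and estimate $\langle G'(\phi_n)-G'(\phi),v\rangle$ by two pieces. For the rough piece, the bound $||\phi_n|^{p-2}\phi_n-|\phi|^{p-2}\phi|\le C(|\phi_n|^{p-1}+|\phi|^{p-1})$ together with H\"older on $(\pa\Om,|g-g_\ep|\,\dsg)$ and \eqref{N>p} yields a bound $C\norm{g-g_\ep}_{(\frac{N-1}{p-1},\infty)}\big(\sup_n\norm{\phi_n}_{\wp}+\norm{\phi}_{\wp}\big)^{p-1}\norm{v}_{\wp}$, which is small uniformly in $n$ and $v$. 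For the smooth piece, $g_\ep\in L^\infty(\pa\Om)$, so H\"older with exponents $\p,p$ and Proposition \ref{classical} reduce matters to $\norm{|\phi_n|^{p-2}\phi_n-|\phi|^{p-2}\phi}_{L^{\p}(\pa\Om)}$, and this tends to $0$ because $\phi_n\to\phi$ in $L^p(\pa\Om)$ by the compact trace embedding and $u\mapsto|u|^{p-2}u$ is continuous from $L^p(\pa\Om)$ to $L^{\p}(\pa\Om)$ (since $||u|^{p-2}u|^{\p}=|u|^p$). Hence $\norm{G'(\phi_n)-G'(\phi)}_{(\wp)'}\to0$, so $G'$ carries weakly convergent sequences to norm-convergent ones and therefore maps bounded sets to relatively compact sets; in particular $G'$ is continuous, which upgrades the Gateaux derivative to a Fréchet derivative and shows $G\in C^1(\wp,\R)$.

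The main obstacle is precisely this compactness step: the weight $g$ lies only in a Lorentz--Zygmund space, and the refined trace embedding of Proposition \ref{Cianchi} that makes the functional finite is \emph{not} compact, so compactness cannot be read off from any single embedding. The point where care is needed is therefore the density splitting $g=g_\ep+(g-g_\ep)$, where the genuinely compact embedding $\wp\embd L^p(\pa\Om)$ handles only the continuous part $g_\ep$ while the singular remainder is absorbed by the \emph{uniform} weighted trace inequality of Proposition \ref{Hardy boundary}; all H\"older and trace constants must be tracked to ensure they are independent of $n$ and of $v$.
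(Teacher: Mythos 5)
Your proof is correct and follows essentially the same route as the paper: dominated convergence with the mean-value-theorem bound $p\,2^{p-1}|g|(|\phi|^{p-1}+|v|^{p-1})|v|$ for the Gateaux derivative, and the density splitting $g=g_\ep+(g-g_\ep)$ with the compact trace embedding $\wp\embd L^p(\pa\Om)$ handling the smooth part and the uniform weighted trace inequality of Proposition~\ref{Hardy boundary} absorbing the singular remainder for compactness. Your version is somewhat more explicit than the paper's (which merely points back to the proof of Proposition~\ref{G cpt} for the compactness step), and your remark that the continuous Gateaux derivative upgrades to a Fr\'echet derivative is a small but welcome addition of rigor.
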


\begin{proof}
For $\phi,v \in \wp$, let  $f : \pa \Om \times [-1,1] \rightarrow \R$ defined by $f(y , t) = g(y) \abs{(\phi + t v)(y)}^p.$
Then
$
\frac{\pa f}{\pa t}(\cdot,t) = p g \abs{\phi + t v}^{p-2} (\phi + t v) v
$ and
$$\left|\frac{\pa f}{\pa t}(\cdot,t)\right|\le p 2^{p-1} \abs{g} \left( \abs{\phi}^{p-1} +  \abs{v}^{p-1} \right) \abs{v}.$$
 Set $h = p 2^{p-1} \abs{g} \left( \abs{\phi}^{p-1} +  \abs{v}^{p-1} \right) \abs{v}$ and for each $n\in \N,$ set 
\begin{align*}
h_n(y) =  n \left( f(y, \frac{1}{n}) - f(y, 0) \right).
\end{align*}
  Clearly,  $h_n(y) \rightarrow \frac{\pa f}{\pa t}(y,0)$  a.e. on $\pa \Om$ and by mean value theorem, we also have
\begin{align*}
\abs{h_n(y)} \leq \sup_{t \in [-1,1]} \Big| \frac{\pa f}{\pa t}(y,t) \Big| \leq h(y).
\end{align*}
Furthermore, using a similar set of arguments as given in the proof of Proposition \ref{Hardy boundary}, one can show that  $h_n,h  \in L^1(\pa \Om),$ for each $n\in \N.$  Therefore, by the dominated convergence theorem, 
\begin{align*}
\lim_{n \rightarrow \infty}  \dispa  n \left( f(y, \frac{1}{n}) - f(y, 0) \right) \; \dy = \dispa  \frac{\pa f}{\pa t}(y,0) \; \dy= p\dispa g| \phi|^{p-2} \phi v.  
\end{align*}
Thus 
\begin{align*}
\big< G'(\phi), v \big> = \frac{{\rm d}}{\dt} G(\phi + tv) \Big|_{t = 0} = p\dispa g| \phi|^{p-2} \phi v.
\end{align*}
The proof of compactness is quite similar to that of Proposition \ref{G cpt}.
\end{proof}

For $p \in (1, \infty)$, consider the following functional
$$
J(\phi) = \dis \abs{\Gr \phi}^p, \quad \forall \phi \in \wp.
$$ 
Then $J$ is  differentiable on $\wp$, and the derivative is given by
$$ 
\big< J'(\phi), u \big> = p\dis |\Gr \phi|^{p-2}\Gr \phi\cdot \Gr u, \quad \forall u \in \wp.
$$

\begin{proposition}\label{class alpha}
Let $p \in (1, \infty).$ Then 
\begin{enumerate}[(i)]
    \item $J'$ is continuous.
    \item $J'$ is of class $\al(\wp).$
\end{enumerate}
\end{proposition}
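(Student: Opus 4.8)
The plan is to establish both properties of $J'$ directly from its explicit form $\langle J'(\phi), u \rangle = p \int_\Om |\Gr \phi|^{p-2} \Gr \phi \cdot \Gr u \, \dx$, using standard monotonicity-type arguments for the $p$-Laplacian and the reflexivity of $\wp$.

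For part (i), continuity of $J'$ as a map from $\wp$ to $(\wp)'$, I would take $\phi_n \to \phi$ in $\wp$ and estimate $\|J'(\phi_n) - J'(\phi)\|_{(\wp)'}$. By definition of the dual norm, this amounts to controlling $\sup_{\|u\|_{\wp} \le 1} \left| p \int_\Om \left( |\Gr \phi_n|^{p-2}\Gr \phi_n - |\Gr \phi|^{p-2}\Gr \phi \right) \cdot \Gr u \, \dx \right|$. By H\"older's inequality this is bounded by $p \, \big\| \, |\Gr \phi_n|^{p-2}\Gr \phi_n - |\Gr \phi|^{p-2}\Gr \phi \, \big\|_{L^{p'}(\Om)}$, so it suffices to show this $L^{p'}$-norm tends to zero. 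Since $\Gr \phi_n \to \Gr \phi$ in $L^p(\Om)^N$, a subsequence converges a.e.\ with an $L^p$-dominating function; the vector field $\xi \mapsto |\xi|^{p-2}\xi$ is continuous, so $|\Gr \phi_n|^{p-2}\Gr \phi_n \to |\Gr \phi|^{p-2}\Gr \phi$ a.e., and the elementary bound $\big| |\xi|^{p-2}\xi \big| = |\xi|^{p-1}$ gives an $L^{p'}$-dominating function; the dominated convergence theorem then yields convergence along the subsequence, and the usual subsequence-of-subsequences argument upgrades it to the full sequence.

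For part (ii), that $J'$ is of class $\alpha(\wp)$: I take $(\phi_n) \subset \wp$ with $\phi_n \wra \phi_0$ in $\wp$ and $\uplim_{n\to\infty} \langle J'(\phi_n), \phi_n - \phi_0 \rangle \le 0$, and I must show $\phi_n \to \phi_0$ strongly. The key identity is $\langle J'(\phi_n) - J'(\phi_0), \phi_n - \phi_0 \rangle = p \int_\Om \left( |\Gr \phi_n|^{p-2}\Gr \phi_n - |\Gr \phi_0|^{p-2}\Gr \phi_0 \right) \cdot (\Gr \phi_n - \Gr \phi_0) \, \dx \ge 0$, by the standard strict monotonicity inequality for $|\xi|^{p-2}\xi$. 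Since $\phi_n - \phi_0 \wra 0$ and $J'(\phi_0) \in (\wp)'$ is a fixed functional, $\langle J'(\phi_0), \phi_n - \phi_0 \rangle \to 0$; combined with the hypothesis this forces $\langle J'(\phi_n) - J'(\phi_0), \phi_n - \phi_0 \rangle \to 0$, i.e.\ $\int_\Om \left( |\Gr \phi_n|^{p-2}\Gr \phi_n - |\Gr \phi_0|^{p-2}\Gr \phi_0 \right) \cdot (\Gr \phi_n - \Gr \phi_0) \, \dx \to 0$. From here I invoke the classical quantitative monotonicity estimates: for $p \ge 2$ one has $(|\xi|^{p-2}\xi - |\zeta|^{p-2}\zeta)\cdot(\xi - \zeta) \ge c|\xi - \zeta|^p$, which immediately gives $\Gr \phi_n \to \Gr \phi_0$ in $L^p$; for $1 < p < 2$ one uses $|\xi - \zeta|^p \le C \big[ (|\xi|^{p-2}\xi - |\zeta|^{p-2}\zeta)\cdot(\xi-\zeta)\big]^{p/2} (|\xi| + |\zeta|)^{p(2-p)/2}$ together with H\"older's inequality and the boundedness of $(\|\Gr \phi_n\|_{L^p})$ to conclude $\Gr \phi_n \to \Gr \phi_0$ in $L^p$. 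Finally, since $\phi_n \wra \phi_0$ in $\wp$ implies $\phi_n \to \phi_0$ in $L^p(\Om)$ by the compactness of the (interior) Rellich embedding, combining this with $\Gr \phi_n \to \Gr \phi_0$ in $L^p$ gives $\phi_n \to \phi_0$ in $\wp$, as required.

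The main obstacle is the case $1 < p < 2$ in part (ii), where the monotonicity of $|\xi|^{p-2}\xi$ is not uniformly elliptic and the naive estimate fails; one must use the refined Simon-type inequality and carefully balance the exponents via H\"older's inequality, using that $(\phi_n)$ is bounded in $\wp$ (which follows from weak convergence) to control the $(|\Gr\phi_n| + |\Gr\phi_0|)^{p(2-p)/2}$ factor. Everything else is routine: part (i) is a dominated-convergence argument and the algebraic inequalities in part (ii) are classical and may be quoted.
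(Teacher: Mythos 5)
Your part (i) is essentially the paper's argument: both reduce to showing that the Nemytskii-type map $\phi \mapsto |\Gr\phi|^{p-2}\Gr\phi$ is continuous from $\wp$ into $L^{p'}(\Om)^N$; you supply the dominated-convergence/subsequence details explicitly, whereas the paper merely asserts the continuity, but the content is the same.

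For part (ii) you take a genuinely different route, and both routes are correct. You invoke the quantitative Simon-type monotonicity inequalities for $\xi \mapsto |\xi|^{p-2}\xi$, splitting into $p \ge 2$ and $1 < p < 2$, to conclude $\Gr\phi_n \to \Gr\phi_0$ in $L^p$ directly from $\langle J'(\phi_n) - J'(\phi_0), \phi_n - \phi_0 \rangle \to 0$. The paper instead uses the softer lower bound
\[
\big\langle J'(\phi_n) - J'(\phi_0), \phi_n - \phi_0 \big\rangle \;\ge\; p\bigl(\|\Gr\phi_n\|_p^{p-1} - \|\Gr\phi_0\|_p^{p-1}\bigr)\bigl(\|\Gr\phi_n\|_p - \|\Gr\phi_0\|_p\bigr) \;\ge\; 0,
\]
which is a pure H\"older consequence, to deduce only the convergence of norms $\|\Gr\phi_n\|_p \to \|\Gr\phi_0\|_p$; combined with $\Gr\phi_n \rightharpoonup \Gr\phi_0$, the Radon--Riesz property of the uniformly convex space $(L^p(\Om))^N$ then upgrades this to strong convergence. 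The paper's route is shorter, avoids the case split on $p$, and needs no quantitative monotonicity estimate, while your route is self-contained at the level of pointwise inequalities and gives a more explicit rate of convergence of the gradients; both then finish identically by adding compactness of $\wp \hookrightarrow L^p(\Om)$.
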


\begin{proof}
(i)  Let $\phi_n \rightarrow \phi$ in $\wp$. For $v \in \wp,$ 
\begin{align*}
    \big|\left< J'(\phi_n) - J'(\phi), v \right>\big| & \leq \dis \abs{(\abs{\Gr \phi_n}^{p-2} \Gr \phi_n - \abs{\Gr \phi}^{p-2} \Gr \phi)} \abs{\Gr v} \\
    & \leq \left( \dis \abs{(\abs{\Gr \phi_n}^{p-2} \Gr \phi_n - \abs{\Gr \phi}^{p-2} \Gr \phi)}^{\p} \right)^{\frac{1}{\p}} \left( \dis \abs{\Gr v}^p \right)^{\frac{1}{p}}. 
\end{align*}
Therefore,
\begin{align*}
    \norm{J'(\phi_n) - J'(\phi)} \leq \left( \dis \abs{(\abs{\Gr \phi_n}^{p-2} \Gr \phi_n - \abs{\Gr \phi}^{p-2} \Gr \phi)}^{\p} \right)^{\frac{1}{\p}}.
\end{align*}
Now consider the map $J_1$ defined as $J_1(\phi) = \abs{\Gr \phi}^{p-2} \Gr \phi.$ Clearly $J_1$ maps $\wp$ into $L^{\p}(\Om)$ and $J_1$ is continuous. Hence we conclude $ \norm{J'(\phi_n) - J'(\phi)} \rightarrow 0$ as $n \rightarrow \infty.$

\noi (ii) Let  $\phi_n \rightharpoonup \phi$ in $\wp$ and let $\uplim_{n \rightarrow \infty} \big< J'(\phi_n), \phi_n - \phi \big> \leq 0.$ Then 
\begin{align}\label{ca 1}
     \uplim_{n \rightarrow \infty}  \big< J'(\phi_n) - J'(\phi), \phi_n - \phi \big> = \uplim_{n \rightarrow \infty}  \big< J'(\phi_n), \phi_n - \phi \big> - \lowlim_{n \rightarrow \infty}  \big< J'(\phi), \phi_n - \phi \big> \leq 0.
\end{align}
Now for each $n \in \N,$ 
\begin{align*}
    \big< J^{\prime}(\phi_n) -  J^{\prime}(\phi), \phi_n - \phi \big>  \geq p\left( \Vert \Gr \phi_n \Vert_p^{p-1} - \Vert \Gr \phi \Vert_p^{p-1} \right) \left( \Vert \Gr \phi_n \Vert_p - \Vert \Gr \phi \Vert_p \right) \geq 0.
\end{align*}
Hence from \eqref{ca 1}, we get 
\begin{align*}
    \lim_{n \rightarrow \infty} \big< J'(\phi_n) - J'(\phi), \phi_n - \phi \big> = 0.
\end{align*}
Therefore, $\Vert \Gr \phi_n \Vert_p \rightarrow \Vert \Gr \phi \Vert_p$ as $n \rightarrow \infty$. Hence by uniform convexity of $(L^p(\Om))^N$, we obtain $\Gr \phi_n \rightarrow \Gr \phi$ in $(L^p(\Om))^N$.  Further, since  $\wp$ is compactly embedded into $L^p(\Om)$, we get $\phi_n \rightarrow \phi$ in $L^p(\Om)$ . Therefore, $\phi_n\rightarrow \phi $ in $\wp.$ Thus the map $J'$ is of class $\al(\wp).$
\end{proof}

\begin{proposition}\label{compactmap1}
Let $p \in (1, \infty)$ and let $N$, $r$ and $f$ satisfy  \textbf{(H1)} or \textbf{(H2)}. Then the map $F$ defined by 
$$ 
\big< F(\phi), v \big> =  \dispa f r(\phi) v 
$$
is a well-defined map from $\wp \rightarrow (\wp)^{\prime}$. Moreover, $F$ is  continuous and compact.
\end{proposition}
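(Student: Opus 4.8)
The plan is to establish the three claims --- well-definedness, continuity, and compactness --- by reducing everything to the weighted trace estimates in Proposition \ref{Hardy boundary} and the growth conditions on $r$. I would first record the pointwise bound that the hypotheses give: under \textbf{(H1)(a)} or \textbf{(H2)(a)}, $r(0)=0$, $r$ is continuous, and $\abs{r(s)}\le C\abs{s}^{\gamma-1}$ for the relevant range of $\gamma$. Hence $\abs{f r(\phi) v}\le C\abs{f}\abs{\phi}^{\gamma-1}\abs{v}$ pointwise on $\pa\Om$, and the task is to show the right-hand side is integrable and that the induced functional $v\mapsto \int_{\pa\Om} f r(\phi)v\,\dsg$ is a bounded linear functional on $\wp$, i.e. an element of $(\wp)'$.

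For well-definedness I would split according to the dimension. When $N=p$ (case \textbf{(H2)}): $f\in\G_d\subset L^{d,\infty}(\pa\Om)$ with $d>1$, and by the finer trace embedding (Proposition \ref{Cianchi}(ii)) together with Proposition \ref{properties}(i), any power $\abs{\phi}^{\gamma-1}\abs{v}$ lies in a Lorentz space $L^{q,1}(\pa\Om)$ with $q$ arbitrarily large (since $\wN\embd L^{\infty,N;-1}(\pa\Om)\embd L^q(\pa\Om)$ for all finite $q$); choosing $q>d'$ and applying the generalized Hölder inequality (Proposition \ref{Lorentz properties}(i)) bounds $\int_{\pa\Om}\abs{f}\abs{\phi}^{\gamma-1}\abs{v}$ by $C\norm{f}_{(d,\infty)}\norm{\phi}^{\gamma-1}_{\wN}\norm{v}_{\wN}$. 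When $N>p$ (case \textbf{(H1)}): if $\ga<p$ then $f\in\F_{\frac{N-1}{p-1}}\subset L^{\frac{N-1}{p-1},\infty}(\pa\Om)$ and $\abs{\phi}^{\gamma-1}\abs{v}$ is handled by the trace embedding $\wp\embd L^{\frac{p(N-1)}{N-p},p}(\pa\Om)$ of Proposition \ref{Cianchi}(i) plus generalized Hölder, exactly as in Proposition \ref{Hardy boundary}; if $\ga\ge p$ the exponent $\tilde p$ in \textbf{(H1)(b)} is chosen precisely so that $\tfrac1{\tilde p}+\tfrac{\gamma(N-p)}{p(N-1)}=1$, making the triple Hölder exponents $(\tilde p,\ \tfrac{p(N-1)}{(\gamma-1)(N-p)},\ \tfrac{p(N-1)}{N-p})$ conjugate, and then $f\in\F_{\tilde p}\subset L^{\tilde p,\infty}(\pa\Om)$, $\abs{\phi}^{\gamma-1}\in L^{\ast,\ast}$, $v\in L^{\ast,\ast}$ combine to give a finite bound $\le C\norm{f}\norm{\phi}^{\gamma-1}_{\wp}\norm{v}_{\wp}$. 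In all cases $F(\phi)\in(\wp)'$ with $\norm{F(\phi)}\le C\norm{f}\,(1+\norm{\phi}_{\wp}^{\gamma-1})$.

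For continuity, suppose $\phi_n\to\phi$ in $\wp$. By the trace embeddings $\phi_n\to\phi$ in an appropriate Lorentz space, so along a subsequence $\phi_n\to\phi$ pointwise a.e. on $\pa\Om$ with a dominating function in that Lorentz space; continuity of $r$ gives $r(\phi_n)\to r(\phi)$ a.e., and the growth bound provides an integrable majorant for $\abs{f}\abs{r(\phi_n)-r(\phi)}\abs{v}$ (uniformly over $\norm{v}_{\wp}\le 1$, using the generalized Hölder estimate above on $\abs{f}\,(\abs{\phi_n}^{\gamma-1}+\abs{\phi}^{\gamma-1})$). A dominated-convergence argument then shows $\norm{F(\phi_n)-F(\phi)}_{(\wp)'}\to0$; a standard subsequence argument upgrades this from "a subsequence" to the full sequence. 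For compactness, let $\phi_n\rightharpoonup\phi$ weakly in $\wp$. The key point is that although the finer trace embeddings are not compact, the classical ones $\wp\embd L^q(\pa\Om)$ \emph{are} compact for $q$ strictly below the critical exponent (Proposition \ref{classical}), and one can always choose such a subcritical $q$ large enough to run the Hölder estimate --- here the strict inequality $\gamma<\frac{p(N-1)}{N-p}$ in \textbf{(H1)(a)} (resp. $\gamma<\infty$ in \textbf{(H2)(a)}) is exactly what creates the needed room. So $\phi_n\to\phi$ strongly in that subcritical $L^q$, hence (subsequence, a.e. convergence, dominated convergence as before) $F(\phi_n)\to F(\phi)$ in $(\wp)'$.

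The main obstacle is the \emph{bookkeeping of Hölder exponents} in the $\ga\ge p$ branch of \textbf{(H1)}: one must verify that $f\in\F_{\tilde p}$ with $\tilde p$ defined by the given relation, together with the two factors $\abs{\phi}^{\gamma-1}$ and $v$ landing in the Lorentz spaces dictated by Proposition \ref{Cianchi}(i) and Proposition \ref{Lorentz properties}(ii), actually produces conjugate exponents summing to $1$ and first-index product exponent $1$, so that Proposition \ref{Lorentz properties}(i) applies with the weak-Lorentz factor $f$ absorbed correctly. The delicate sub-point for \emph{compactness} (as opposed to mere boundedness) is ensuring that after giving up a little integrability to pass to a compact subcritical embedding, the Hölder triple still closes; the strict growth restriction on $\gamma$ guarantees this, and making that quantitative is the crux of the argument.
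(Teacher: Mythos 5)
Your derivation of the a priori bound $\abs{\big< F(\phi), v \big>} \le C\norm{f}\,\norm{\phi}_{\wp}^{\gamma-1}\norm{v}_{\wp}$ (and $\norm{\phi}_{\wp}^{p-1}\norm{v}_{\wp}$ when $\ga < p$) follows the paper's route, and you correctly identify the $\ga \ge p$ exponent bookkeeping as the delicate step; the paper closes it by combining $\wp \hookrightarrow L^{\pcr,p}(\pa\Om) \hookrightarrow L^{\pcr,\ga}(\pa\Om)$ with the relation $\frac{1}{\tilde p} + \frac{(\gamma-1)(N-p)}{p(N-1)} + \frac{N-p}{p(N-1)} = 1$. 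That part is sound.

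The compactness (and, more subtly, continuity) argument has a genuine gap when $N > p$. You propose replacing the finer trace embedding by a compact subcritical one $\wp \hookrightarrow L^q(\pa\Om)$, $q < \pcr$, and assert that $\gamma < \pcr$ "creates the needed room." It does not. The first Lorentz indices for $f$, $\abs{\phi}^{\gamma-1}$, $v$ already saturate the H\"{o}lder condition: with $\phi, v$ in the critical trace space the reciprocals sum to $\frac{1}{\tilde p} + \frac{\gamma-1}{\pcr} + \frac{1}{\pcr} = 1$ (and similarly $\frac{p-1}{N-1} + \frac{(p-1)(N-p)}{p(N-1)} + \frac{N-p}{p(N-1)} = 1$ in the $\gamma<p$ branch). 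Replacing $\pcr$ by any $q < \pcr$ in either of the last two slots increases the corresponding reciprocal, pushes the sum strictly above $1$, and the generalized H\"{o}lder inequality no longer applies, because $f$ sits exactly at the critical index $\tilde p$ (resp.\ $\frac{N-1}{p-1}$) in a weak Lorentz space. The slack produced by $\gamma < \pcr$ only guarantees $\tilde p \in (1, \infty)$; it does not let you perturb the $\phi$ or $v$ exponents. (Your device does work for $N = p$ under (H2): there $f \in \G_d$ with $d > 1$ leaves genuine slack and $\wN \hookrightarrow L^q(\pa\Om)$ is compact for every finite $q$.) The missing ingredient, which is what the paper invokes by pointing to the proof of Proposition \ref{G cpt}, is the $\ep$-splitting $f = f_\ep + (f - f_\ep)$ with $f_\ep \in \c(\pa\Om)$ and $\norm{f - f_\ep}$ small in the relevant Lorentz norm. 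For the bounded piece $f_\ep$ the weight drops out of the H\"{o}lder triple, and then $\gamma < \pcr$ genuinely does give room to use a compact subcritical trace embedding together with the Nemytskii continuity of $\phi\mapsto r(\phi)$; the tail $f - f_\ep$ is handled by the critical estimate, whose right-hand side is made small by $\norm{f-f_\ep}$. Note that your proof never uses the closure hypothesis $f \in \F_{\tilde p}$ (as opposed to merely $f \in L^{\tilde p, \infty}$), which is a strong indication it cannot be complete. The same approximation step is also what converts your fixed-$v$ dominated-convergence observations into the required uniform statement $\norm{F(\phi_n) - F(\phi)}_{(\wp)'} \to 0$.
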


\begin{proof} 
First, we assume that $N, r$ and $f$ satisfy \textbf{(H1)}. In this case $\ga\in (1,\pcr)$ and we use different arguments for   $\ga \in (1,p)$ and $\ga \in [p, \pcr).$
For $\ga \in (1,p),$  there exists $C > 0$ such that $\abs{r(s)} \leq C \abs{s}^{p-1}$ for $s \in \R$. Therefore, using the finer trace embeddings (Proposition \ref{Cianchi}), for $\phi, v \in \wp,$ clearly we have 
\begin{align}\label{for < p}
    \big |{\big< F(\phi), v \big>} \big | \leq C \norm{f}_{\left(\frac{N-1}{p-1}, \infty \right)}\norm{\phi}^{p -1}_{\wp} \norm{v}_{\wp}.
    \end{align} For $\gamma \in [p, \pcr)$,  using Proposition \ref{Lorentz properties} and the finer trace embeddings (Proposition \ref{Cianchi}), we have
    \begin{align}\label{trace embed 1}
    \wp  \hookrightarrow L^{\pcr, \ga}(\pa \Om).
\end{align}
Since 
$
\frac{1}{\tilde{p}} + \frac{(\gamma-1)(N-p)}{p(N-1)} + \frac{N-p}{p(N-1)} = 1,
$
 for $\phi, v \in \wp,$  using the generalized H\"{o}lder inequality (Proposition \ref{Lorentz properties}), we obtain 
\begin{align*}
    \dispa \abs{f} \abs{r(\phi)v}  \leq C \tilde{p} \norm{f}_{(\tilde{p}, \infty)} \norm{\phi}^{\ga - 1}_{\left( \pcr, \ga \right)} \norm{ v}_{\left(\pcr, \ga \right)}.
\end{align*}
Therefore, from \eqref{trace embed 1},   
\begin{align}\label{cont}
    \big |{\big< F(\phi), v \big>} \big | \leq C \norm{f}_{(\tilde{p}, \infty)} \norm{\phi}^{\ga -1}_{\wp} \norm{v}_{\wp},  \quad \forall \phi, v \in \wp,
\end{align}
where $C = C(N,p) > 0$. \\ 
Now assume that $N,r$ and $f$ satisfy \textbf{(H2)}. For $d \in (1, \infty)$, choose $a_i, b_i  \in (1, \infty)$ (for $i = 1,2$) such that 
\begin{align*}
    a_1, b_1 > \frac{1}{\ga - 1}, \quad   \frac{1}{d} + \frac{1}{a_1} + \frac{1}{a_2} = 1 = \frac{1}{N} + \frac{1}{b_1} + \frac{1}{b_2}.
\end{align*}
For $\phi, v \in \wp,$ using the generalized H\"{o}lder inequality (Proposition \ref{Lorentz properties}), we obtain 
\begin{align}\label{cont 1}
    \dispa \abs{f} \abs{r(\phi) v} &\leq  C d \norm{f}_{(d, N)} \norm{\phi}^{\ga - 1}_{(a_1(\ga - 1),b_1(\ga - 1))} \norm{v}_{(a_2, b_2)}.
\end{align}
Now by Proposition \ref{properties} and using the trace embeddings (Proposition \ref{classical} and Proposition \ref{Cianchi}), we have 
\begin{equation*}
    \begin{aligned}
& \quad  L^{d, \infty;N}(\pa \Om) \hookrightarrow L^{d,N}(\pa \Om),\\
& \quad \wN  \hookrightarrow L^{\infty, N; -1}(\pa \Om) \hookrightarrow L^{a_1(\gamma-1),b_1(\gamma-1)}(\pa \Om),\\
& \quad \wN \hookrightarrow L^q(\pa \Om) \hookrightarrow  L^{a_2, b_2}(\pa \Om), \; \text{for} \; q > a_2.
    \end{aligned} 
\end{equation*}
Therefore, from \eqref{cont 1} we get \label{cont 2}
\begin{align*}
    \big |{\big< F(\phi), v \big>} \big | \leq C\norm{f}_{(d, \infty;N)}\norm{\phi}^{\gamma-1}_{\wN} \norm{v}_{\wN}, \quad \forall \phi, v \in \wN,
\end{align*}
where $C=C(N) > 0.$ Thus the map $F$ is well defined in both the cases.  The continuity and the compactness of  $F$ will follow from the similar set of arguments as given  in the proof of Proposition \ref{G cpt}. So we omit the proof.
\end{proof}

\begin{proposition}\label{Growth}
Let $p \in (1, \infty)$. Let $N,r$ and $f$ be given as in Proposition  \ref{compactmap1}. Then 
$$
 \frac{\norm{F(\phi)}_{(\wp)'}}{\norm{\phi}_{\wp}^{p-1}} \longrightarrow 0, \quad \text{as} \;  \norm{\phi}_{\wp} \rightarrow 0.
$$
\end{proposition}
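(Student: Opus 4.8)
The plan is to bound $\norm{F(\phi)}_{(\wp)'}$ by first estimating the pairing $\abs{\inpr{F(\phi),v}}$ uniformly over $\norm{v}_{\wp}\le 1$, and then splitting the integral defining $F$ into a region where $\abs{\phi}$ is small (so the smallness of $r(s)/\abs{s}^{p-1}$ — resp. $r(s)/\abs{s}^{N-1}$ — near zero kicks in) and its complement (handled by the global growth bound $\abs{r(s)}\le C\abs{s}^{\ga-1}$ together with the trace embeddings already derived in Proposition \ref{compactmap1}). The key point is that in the second region the exponent $\ga-1>p-1$ (resp. $\ga-1>N-1$) produces an extra power of $\norm{\phi}_{\wp}$, so that term is automatically $o(\norm{\phi}_{\wp}^{p-1})$.

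I would proceed as follows. Fix $\eps>0$. By $({\bf H1})({\bf a})$ (resp. $({\bf H2})({\bf a})$) choose $\de>0$ such that $\abs{r(s)}\le \eps\abs{s}^{p-1}$ for $\abs{s}\le\de$ (resp. $\abs{r(s)}\le\eps\abs{s}^{N-1}$ for $\abs{s}\le\de$); combining with the global bound we get, for all $s\in\R$,
\begin{align*}
\abs{r(s)} \le \eps\abs{s}^{p-1} + C_\de\abs{s}^{\ga-1},
\end{align*}
where $C_\de$ depends on $\eps$ through $\de$ (in the case $\ga<p$ the second term can be absorbed differently, but this inequality still holds since then $\abs{r(s)}\le C\abs{s}^{p-1}$ globally and one may take $C_\de=0$ on the nose after enlarging $\eps$). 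Then for $\phi,v\in\wp$,
\begin{align*}
\abs{\inpr{F(\phi),v}} \le \eps\dispa\abs{f}\abs{\phi}^{p-1}\abs{v} + C_\de\dispa\abs{f}\abs{\phi}^{\ga-1}\abs{v}.
\end{align*}
The first integral is estimated exactly as in \eqref{for < p} (for $N>p$, $\ga<p$) or by the generalized H\"older inequality together with $\wp\embd L^{\pcr,p}(\pa\Om)$ and the hypothesis on $f$ (for $\ga\ge p$), yielding a bound $C\norm{f}\norm{\phi}_{\wp}^{p-1}\norm{v}_{\wp}$; the second integral is estimated exactly as in \eqref{cont} or \eqref{cont 1}, yielding $C C_\de\norm{f}\norm{\phi}_{\wp}^{\ga-1}\norm{v}_{\wp}$. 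For $N=p$ one uses instead the chain of embeddings displayed in the proof of Proposition \ref{compactmap1}. Taking the supremum over $\norm{v}_{\wp}\le1$ gives
\begin{align*}
\frac{\norm{F(\phi)}_{(\wp)'}}{\norm{\phi}_{\wp}^{p-1}} \le C\eps\norm{f}_{*} + C C_\de\norm{f}_{*}\norm{\phi}_{\wp}^{\ga-p},
\end{align*}
where $\norm{f}_{*}$ denotes the relevant Lorentz--Zygmund norm and $\ga-p>0$ in the case $\ga\ge p$; when $\ga<p$ (only for $N>p$) the exponent $\ga-1-(p-1)=\ga-p<0$, so instead I use that the $C_\de$-term is absent and the whole bound is simply $C\eps\norm{f}_{*}$.

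Letting $\norm{\phi}_{\wp}\to0$, the second term on the right vanishes when $\ga\ge p$ (or is already absent when $\ga<p$), so $\limsup_{\norm{\phi}_{\wp}\to0}\norm{F(\phi)}_{(\wp)'}/\norm{\phi}_{\wp}^{p-1}\le C\eps\norm{f}_{*}$; since $\eps>0$ is arbitrary the limit is $0$. The main obstacle — such as it is — is bookkeeping: one must be careful that in the borderline case $\ga<p$ there is no spare power of $\norm{\phi}_{\wp}$, so the argument there relies entirely on the $\eps$-smallness near zero rather than on the decay of a higher-order term; and one must invoke the correct trace embedding (Proposition \ref{Cianchi} for $N>p$, or the composite chain in Proposition \ref{compactmap1} for $N=p$) consistently with the splitting. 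No genuinely new estimate beyond those in Proposition \ref{compactmap1} is needed.
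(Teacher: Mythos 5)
Your plan is on the right track for $\gamma>p$, where the bound $\abs{r(s)}\le C\abs{s}^{\ga-1}$ alone already gives $\norm{F(\phi)}/\norm{\phi}_{\wp}^{p-1}\lesssim \norm{\phi}_{\wp}^{\ga-p}\to 0$ (the paper records exactly this). The genuine content of the proposition is the case $\ga<p$, and there your argument has a real gap.

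Concretely, for $\ga<p$ the pointwise splitting $\abs{r(s)}\le \eps\abs{s}^{p-1}+C_\de\abs{s}^{\ga-1}$ cannot be upgraded to ``$C_\de=0$ after enlarging $\eps$''. Since $\ga-1<p-1$, for $\abs{s}\ge\de$ one has $\abs{s}^{\ga-1}\le\de^{\ga-p}\abs{s}^{p-1}$, so the best global bound you get is $\abs{r(s)}\le\max\{\eps,\,C\de^{\ga-p}\}\abs{s}^{p-1}$; the constant $C\de^{\ga-p}$ blows up as $\de\to 0$, precisely when you shrink $\eps$. Thus neither the $C_\de$-term is ``absent'' nor can it be absorbed into the $\eps$-term, and the bound you would obtain, $\norm{F(\phi)}/\norm{\phi}^{p-1}\le C'\norm{f}_*$ with $C'$ not small, does not go to zero. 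The same issue appears in the $N=p$ case when $\ga<N$.

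What the paper actually does for $\ga<p$ is to split the \emph{domain of integration} into $A=\{\abs{\phi}<s_0\}$ and $B=\{\abs{\phi}\ge s_0\}$ (you allude to this split in your opening paragraph, but your pointwise inequality abandons it), and, crucially, to split $f=f_\eps+(f-f_\eps)$ with $f_\eps\in\c(\pa\Om)$ and $\norm{f-f_\eps}_{(\frac{N-1}{p-1},\infty)}<\eps$. On $A$ the $\eps$-smallness of $r(s)/\abs{s}^{p-1}$ gives $\eps\norm{\phi}^{p-1}\norm{v}$. On $B$ the $(f-f_\eps)$-piece uses only $\abs{r(s)}\le C\abs{s}^{p-1}$ with the \emph{norm} of $f-f_\eps$ carrying the $\eps$, while the $f_\eps$-piece uses the sharper bound $\abs{r(s)}\le C\abs{s}^{\pcr-1}$ valid on $B$ (available because $\abs{s}\ge s_0$ and $\ga<\pcr$), and then $f_\eps\in L^\infty$ allows an ordinary Lebesgue–H\"older plus critical trace estimate, producing $\norm{\phi}_{\wp}^{N(p-1)/(N-p)}$ — a strictly higher power than $p-1$, hence $o(\norm{\phi}_{\wp}^{p-1})$. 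One cannot bypass this approximation: the Lorentz indices of $f$, $\abs{\phi}^{\pcr-1}$, and $v$ do not satisfy the generalized H\"older relation, so the $f_\eps$/$(f-f_\eps)$ decomposition — i.e.\ the fact that $f\in\F_{\frac{N-1}{p-1}}$ and not merely $L^{\frac{N-1}{p-1},\infty}$ — is essential. Your proposal misses this mechanism entirely.

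As a side remark, the same domain-splitting argument is also needed if $\ga=p$ (for which $\norm{\phi}^{\ga-p}=1$ does not vanish), a borderline case glossed over by both proposals.
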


\begin{proof}
Let $\ep > 0$ be given. We only prove the case when $N,r$ and $f$ satisfy \textbf{(H1)}. For  \textbf{(H2)}, the proof is similar. For $\ga \in [p, \pcr),$  using \eqref{cont} we have, 
\begin{align*}
      \norm{F(\phi)} \leq C \norm{f}_{(\tilde{p}, \infty)}\norm{\phi}^{\ga -1}_{\wp}, \quad \forall \phi \in \wp.
\end{align*}
 Therefore, 
\begin{align*}
      \frac{\norm{F(\phi)}_{(\wp)'}}{\norm{\phi}_{\wp}^{p-1}} \leq C \norm{f}_{(\tilde{p}, \infty)}\norm{\phi}^{\ga -p}_{\wp}.
\end{align*}
If  $\ga \in (1,p)$,  then from \textbf{(H1)} there exists $s_0 > 0$ and $C = C(s_0) >  0$ such that
\begin{equation}\label{growth}
\begin{aligned}
       \abs{r(s)} &< \frac{\ep}{\norm{f}_{\left(\frac{N-1}{p-1}, \infty \right)}}  \abs{s}^{p-1}, \quad \text{for} \; \abs{s} < s_0, \\
       \abs{r(s)} \leq C \abs{s}^{p -1} \quad &\text{and} \quad \abs{r(s)} \leq C \abs{s}^{\pcr-1}, \quad \text{for} \; \abs{s} \geq s_0. 
\end{aligned}
\end{equation}
For $\phi \in \wp,$ set $A = \{ y \in \pa \Om : \abs{ \phi(y)} < s_0 \}$ and $B = \pa \Om \setminus A.$ For $v \in \wp$, using \eqref{growth} and \eqref{for < p}, we get
\begin{align}\label{grth1}
       \int_{A} \abs{f}\abs{r(\phi)}\abs{ v} < \frac{\ep}{\norm{f}_{\left(\frac{N-1}{p-1}, \infty \right)}} \int_{A} \abs{f} \abs{\phi}^{p-1} \abs{v} & \leq C \ep \norm{\phi}_{\wp}^{p-1} \norm{v}_{\wp}.
\end{align}
To estimate the above integral on $B$, we split $f= f_\ep+ (f-f_\ep)$ where  $f_{\ep} \in \c(\pa \Om)$ with $\norm{f - f_{\ep}}_{\left(\frac{N-1}{p-1}, \infty \right)} < \ep.$ Now \eqref{growth} and \eqref{for < p} yield
\begin{align}\label{grth2}
        \int_{B} \abs{f - f_{\ep}} \abs{r(\phi)} \abs{v} \leq C\int_{B} \abs{f - f_{\ep}} \abs{\phi}^{p-1} \abs{v}  < C\ep \norm{\phi}_{\wp}^{p-1}\norm{v}_{\wp},
\end{align}
where $C=C(s_0,N,p)>0.$ On the other hand using \eqref{growth}, H\"{o}lder inequality (Proposition \ref{properties}) and the classical trace embeddings (Proposition \ref{classical}), we obtain
\begin{align*}
\nonumber  \int_{B} \abs{f_{\ep}}\abs{r(\phi)}\abs{v}  & \leq C \int_{B}  \abs{f_{\ep}}\abs{\phi}^{\pcr - 1} \abs{v} \\ 
            & \leq C \norm{f_{\ep}}_{L^{\infty}(\pa \Om)} \norm{\abs{\phi}^{\pcr - 1}}_{L^{\frac{p(N-1)}{N(p-1)}}(\pa \Om)} \norm{v}_{L^{\frac{p(N-1)}{N-p}}(\pa \Om)},\\
            &\leq C \norm{f_{\ep}}_{L^{\infty}(\pa \Om)} \norm{\phi}_{\wp}^{\frac{N(p-1)}{N-p}} \norm{v}_{\wp},  
\end{align*}
where $C = C(N,p) > 0.$  Now using \eqref{grth2} we conclude 
\begin{align*} 
           \int_{B} \abs{f}\abs{r(\phi)}\abs{v} \leq C \left( \ep \norm{\phi}_{\wp}^{p-1}  +  \norm{f_{\ep}}_{L^{\infty}(\pa \Om)} \norm{\phi}_{\wp}^{\frac{N(p-1)}{N-p}} \right) \norm{v}_{\wp},
\end{align*}
where $C = C(s_0,N,p) > 0$. Thus  \eqref{grth1} and the above inequality yield:   
\begin{align*}
           \norm{F(\phi)}_{(\wp)'} <C \left( \ep \norm{\phi}_{\wp}^{p-1}  +  \norm{f_{\ep}}_{L^{\infty}(\pa \Om)} \norm{\phi}_{\wp}^{\frac{N(p-1)}{N-p}} \right).
\end{align*}
 Therefore, 
 \begin{align*}
           \frac{\norm{F(\phi)}_{(\wp)'}}{\norm{\phi}_{\wp}^{p-1}} < C \left( \ep  +  \norm{f_{\ep}}_{L^{\infty}(\pa \Om)} \norm{\phi}_{\wp}^{\frac{p(p-1)}{N-p}} \right)\ra 0.
\end{align*}
as $\norm{\phi}_{\wp} \rightarrow 0.$ 
\end{proof}

For $g$ as given in Theorem \ref{Steklov existence}, we consider the set 
$$ 
M_g = \left\{ \phi \in \wp: \dispa g \abs{\phi}^p > 0 \right\}.
$$ 
Since $g^+ \not \equiv 0$, we can show that the set $M_g$ is nonempty. The functional $J$ is not coercive on $\wp.$ However, using a Poincar\'{e} type inequality on $M_g$ we show that $J$ is coercive on $M_g$.

\begin{Lemma}\label{Poin}
Let  $g^+ \not \equiv 0,$  $\int_{\pa \Om} g < 0,$ and 
$$
g \in \left\{\begin{array}{ll}                                                     
                \F_{\frac{N-1}{p-1}} & \text{  for } N > p,\\
                 \G_1 & \text{  for } N = p. 
              \end{array}\right.$$ 
Then there exists $m \in (0,1)$ such that
\begin{align}\label{Poincare}
      \dis \abs{\Gr \phi}^p \geq m \dis \abs{\phi}^p, \quad \forall \phi \in M_g.
\end{align}
\end{Lemma}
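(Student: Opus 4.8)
The plan is to argue by contradiction, normalizing and extracting a weak limit. Suppose \eqref{Poincare} fails for every $m \in (0,1)$; then for each $n \in \N$ there is $\phi_n \in M_g$ with $\int_\Om |\Gr \phi_n|^p < \frac{1}{n} \int_\Om |\phi_n|^p$. By homogeneity of both sides in $\phi_n$ we may normalize so that $\int_\Om |\phi_n|^p = 1$, hence $\int_\Om |\Gr \phi_n|^p < \frac{1}{n} \to 0$. Then $(\phi_n)$ is bounded in $\wp$, so up to a subsequence $\phi_n \wra \phi$ in $\wp$, and by the compact embedding $\wp \embd L^p(\Om)$ we get $\phi_n \ra \phi$ strongly in $L^p(\Om)$, so $\int_\Om |\phi|^p = 1$ and in particular $\phi \not\equiv 0$. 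From $\int_\Om |\Gr \phi_n|^p \to 0$ and weak lower semicontinuity of the norm, $\int_\Om |\Gr \phi|^p \le \liminf \int_\Om |\Gr \phi_n|^p = 0$, so $\Gr \phi = 0$ a.e. on the connected (Lipschitz, hence we may take $\Om$ connected — if not, argue componentwise) domain $\Om$, forcing $\phi \equiv c$ for some nonzero constant $c$.

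Next I would pass to the limit in the constraint defining $M_g$. Each $\phi_n \in M_g$ means $\int_{\pa\Om} g |\phi_n|^p > 0$, so $\int_{\pa\Om} g |\phi_n|^p \ge 0$. The functional $G(\psi) = \int_{\pa\Om} g|\psi|^p$ is continuous on $\wp$ by Proposition \ref{G cont} (indeed compact by Proposition \ref{G cpt}), and $\phi_n \wra \phi$; compactness of $G$ gives $G(\phi_n) \to G(\phi)$. Hence $\int_{\pa\Om} g |\phi|^p \ge 0$. But $\phi = c$ is a nonzero constant, so $\int_{\pa\Om} g |\phi|^p = |c|^p \int_{\pa\Om} g < 0$ by hypothesis, a contradiction. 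This contradiction establishes the existence of some $m > 0$ with \eqref{Poincare}; and since the inequality is vacuously checkable — or rather, since $m$ can always be decreased — we may take $m \in (0,1)$, which gives the stated form.

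The main obstacle is making sure the limit function $\phi$ is genuinely nonzero and genuinely constant: this needs the compact trace/Sobolev embedding $\wp \embd\embd L^p(\Om)$ (Proposition \ref{classical} and the standard Rellich–Kondrachov theorem for $\wp$) to upgrade weak convergence to strong $L^p$ convergence, so that the normalization $\int_\Om |\phi_n|^p = 1$ survives, together with connectedness of $\Om$ to conclude $\Gr\phi = 0 \Rightarrow \phi$ constant. The second delicate point is the passage to the limit in the $M_g$-constraint: one cannot merely use continuity of $G$ with respect to strong $\wp$-convergence (which we do not have — only weak), so it is essential to invoke the \emph{compactness} of $G$ from Proposition \ref{G cpt}, which converts the weak convergence $\phi_n \wra \phi$ into $G(\phi_n) \to G(\phi)$. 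Everything else is routine weak lower semicontinuity and homogeneity bookkeeping.
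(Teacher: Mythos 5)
Your proof is correct and follows the same route as the paper's: contradiction, normalize in $L^p(\Om)$, pass to a weak limit which by weak lower semicontinuity and connectedness is a nonzero constant, and then use compactness of $G$ to transfer the constraint $G(\phi_n)>0$ to the limit, contradicting $\int_{\pa\Om}g<0$. The only cosmetic difference is that the paper names the normalized sequence $w_n=\|\phi_n\|_p^{-1}\phi_n$ rather than renaming $\phi_n$ in place.
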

\begin{proof}
 On the contrary, assume that \eqref{Poincare} does not hold for any $m\in (0,1)$. Thus for each $n\in \N,$  there exists $\phi_n \in M_g$ such that
 \begin{align*}
    \dis \abs{\Gr \phi_n}^p < \frac{1}{n} \dis \abs{\phi_n}^p.
 \end{align*}
 If we set $w_n = \norm{\phi_n}^{-1}_p \phi_n,$ then  $\norm{w_n}_p = 1$ and $ \int_{\Om} \abs{\Gr w_n}^p < \frac{1}{n}.$
 Thus $(w_n)$ is bounded and hence there exists a subsequence $(w_{n_k})$ of $(w_n)$ such that  $w_{n_k} \rightharpoonup w$ in $\wp.$ By weak lowersemicontinuity of $\norm{\Gr \cdot}_p$ we have  $\norm{\Gr w}_p =0.$ Hence  the connectedness yields $w \equiv c $ a.e. in $\overline{\Om}$. By the compactness of the embedding of $\wp$ into $L^p(\Om)$, we get $\norm{w}_p = 1$ and hence $\abs{c} |\Om|^{\frac{1}{p}} = 1$. Therefore, $ \int_{\pa \Om} g \abs{w}^p = \frac{1}{|\Om|} \int_{\pa \Om} g < 0$. On the other hand, $\int_{\pa \Om} g \abs{w_{n_k}}^p = \norm{\phi_{n_k}}^{-p}_p \int_{\pa \Om} g \abs{\phi_{n_k}}^p > 0 $. Thus by the compactness of $G$ (Proposition \ref{G cpt}), we get  $ \int_{\pa \Om} g \abs{w}^p = \lim_{k \rightarrow \infty} \int_{\pa \Om} g \abs{w_{n_k}}^p \geq 0$,   a contradiction. Thus there must exists $m\in (0,1)$ satisfying  \eqref{Poincare}. 
\end{proof}

\begin{remark}\label{Manifold}
For $g$ as given in Lemma \ref{Poin}, consider the set
\begin{align*}
      N_g = \left \{\phi \in \wp: \dispa g \abs{\phi}^p = 1\right \} = G^-(1).
\end{align*}  
For $\phi \in N_g,$  $ \langle  G'(\phi), \phi \rangle = p \neq 0$. Thus $1$ is a regular point of $G$ and $N_g$  is a $C^1$ manifold. Moreover (see \cite[Proposition 6.4.35]{Drabek-Milota}),
\begin{align*}
      \norm{\dJ(\phi)} = \min_{\la \in \R} \norm{(J' - \la G')(\phi)}, \quad \forall \phi \in N_g. 
\end{align*}
\end{remark}

\begin{definition}
A map $f \in C^1(Y, \R)$ is said to satisfy {\bf Palais-Smale (P. S.)} condition on a $C^1$ manifold $ M  \subset Y$, if $(\phi_n)$ is a sequence  in $M$ such that $f(\phi_n) \rightarrow c \in \R$ and $ \Vert {\rm d}f (\phi_n) \Vert \rightarrow 0,$ then $(\phi_n)$ has a subsequence that converges in $M$.
\end{definition}

\begin{Lemma}\label{PS} 
Let $g$ be  as given in Lemma \ref{Poin}. Then $J$ satisfies the P. S. condition on $N_g.$ 
\end{Lemma}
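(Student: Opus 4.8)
The plan is to verify the Palais--Smale condition directly from the definitions, exploiting the coercivity of $J$ on $M_g$ established in Lemma \ref{Poin} together with the compactness of $G'$ from Proposition \ref{G' cpt} and the class-$\al(\wp)$ property of $J'$ from Proposition \ref{class alpha}. So let $(\phi_n) \subset N_g$ be a sequence with $J(\phi_n) \to c$ and $\norm{\dJ(\phi_n)} \to 0$. First I would show $(\phi_n)$ is bounded in $\wp$: since each $\phi_n \in N_g \subset M_g$, the Poincar\'e-type inequality \eqref{Poincare} gives $\norm{\phi_n}_p^p \le m^{-1} \norm{\Gr \phi_n}_p^p = m^{-1} J(\phi_n)$, and $J(\phi_n)$ is bounded because it converges; hence $\norm{\phi_n}_{\wp}^p = J(\phi_n) + \norm{\phi_n}_p^p$ is bounded. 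Passing to a subsequence, $\phi_n \rightharpoonup \phi$ in $\wp$.

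Next I would extract from the constraint $N_g = G^-(1)$ and Remark \ref{Manifold} a Lagrange-multiplier description of the approximate critical points. By Remark \ref{Manifold}, $\norm{\dJ(\phi_n)} = \min_{\la \in \R} \norm{(J' - \la G')(\phi_n)}$, so there is $\la_n \in \R$ with $\norm{J'(\phi_n) - \la_n G'(\phi_n)}_{(\wp)'} = \norm{\dJ(\phi_n)} \to 0$. Testing this against $\phi_n$ and using $\langle J'(\phi_n), \phi_n\rangle = p J(\phi_n)$, $\langle G'(\phi_n), \phi_n\rangle = p$ (since $\phi_n \in N_g$), we get $\la_n = J(\phi_n) + o(1) \to c$; in particular $(\la_n)$ is bounded. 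Now $G'$ is compact (Proposition \ref{G' cpt}), so along a further subsequence $G'(\phi_n) \to G'(\phi)$ strongly in $(\wp)'$, and hence $\la_n G'(\phi_n) \to c\, G'(\phi)$ strongly. Consequently $J'(\phi_n) = \big(J'(\phi_n) - \la_n G'(\phi_n)\big) + \la_n G'(\phi_n) \to c\, G'(\phi)$ strongly in $(\wp)'$.

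To conclude I would verify the defining condition for class $\al(\wp)$: we have $\phi_n \rightharpoonup \phi$, and $\langle J'(\phi_n), \phi_n - \phi\rangle = \langle J'(\phi_n) - c\,G'(\phi), \phi_n - \phi\rangle + c\,\langle G'(\phi), \phi_n - \phi\rangle$; the first term tends to $0$ because $J'(\phi_n) \to c\,G'(\phi)$ strongly while $(\phi_n - \phi)$ is bounded, and the second tends to $0$ because $\phi_n \rightharpoonup \phi$. Hence $\uplim_{n\to\infty} \langle J'(\phi_n), \phi_n - \phi\rangle = 0 \le 0$, and since $J'$ is of class $\al(\wp)$ (Proposition \ref{class alpha}(ii)), this forces $\phi_n \to \phi$ strongly in $\wp$. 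Finally, $N_g$ is closed (it is the preimage of a point under the continuous map $G$), so $\phi \in N_g$, which establishes the P.~S.\ condition.

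The main obstacle is the middle step: obtaining strong convergence of $J'(\phi_n)$ from the weak convergence of $(\phi_n)$. This genuinely requires the compactness of $G'$ to upgrade $\la_n G'(\phi_n)$ to a strongly convergent sequence, and it requires controlling the multipliers $\la_n$, which is where the identities $\langle J'(\phi_n),\phi_n\rangle = pJ(\phi_n)$ and $\langle G'(\phi_n),\phi_n\rangle = p$ on $N_g$ are essential; once $J'(\phi_n)$ converges strongly, the class-$\al(\wp)$ property does the rest almost mechanically. One should also be mildly careful that all passages to subsequences are compatible (a standard Urysohn-type argument shows the full sequence converges, since every subsequence has a sub-subsequence converging to the same limit $\phi$, the unique weak limit).
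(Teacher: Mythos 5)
Your proof follows essentially the same route as the paper: boundedness of $(\phi_n)$ via the Poincaré inequality of Lemma \ref{Poin}, extraction of Lagrange multipliers $\la_n$ via Remark \ref{Manifold} and the identity $\langle G'(\phi_n),\phi_n\rangle = p$ on $N_g$ to show $\la_n \to c$, compactness of $G'$ to control the $\la_n G'(\phi_n)$ term, and the class-$\al(\wp)$ property of $J'$ to upgrade weak to strong convergence. The only cosmetic difference is that the paper shows $\langle J'(\phi_{n_k}), \phi_{n_k}-\phi\rangle \to 0$ directly by decomposing it into $\langle (J'-\la_{n_k}G')(\phi_{n_k}), \phi_{n_k}-\phi\rangle + \la_{n_k}\langle G'(\phi_{n_k}), \phi_{n_k}-\phi\rangle$, whereas you first establish strong convergence of $J'(\phi_n)$ itself and then conclude; both use the same ingredients in the same roles.
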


\begin{proof}
Let $(\phi_n)$ be a sequence in $N_g$ and $\la \in \R$ such that $J(\phi_n) \rightarrow \la$ and $\norm{\dJ(\phi_n)} \rightarrow 0.$ By Remark \ref{Manifold}, there exists a sequence $(\la_n)$ such that $(J' - \la_n G')(\phi_n) \rightarrow 0$ as $n \rightarrow \infty.$ By Lemma \ref{Poin}, the sequence $(\phi_n)$ is also bounded in $\wp.$ Now using the reflexivity of $\wp$, we get a subsequence $(\phi_{n_k})$ such that $\phi_{n_k} \rightharpoonup \phi$ in $\wp$. Since $N_g$ is weakly closed, $\phi \in N_g$. Also $\la_{n_k} \rightarrow \la$ as $k \rightarrow \infty,$ since $$\big< (J' - \la_{n_k} G')(\phi_{n_k}), \phi_{n_k} \big> = p(J(\phi_{n_k}) - \la_{n_k}).$$ Furthermore,
\begin{align*}
       \big< J'(\phi_{n_k}), \phi_{n_k} - \phi \big> = \big< (J' - \la_{n_k} G')(\phi_{n_k}), \phi_{n_k} - \phi \big> + \la_{n_k} \big< G'(\phi_{n_k}), \phi_{n_k} - \phi \big>.
\end{align*}
Now using the compactness of $G'$, we get  $\langle J'(\phi_{n_k}), \phi_{n_k} - \phi \rangle \rightarrow 0.$  Moreover, as $J'$ is of class $\al(\wp)$ (Proposition \ref{class alpha}), the sequence $(\phi_{n_k})$ converges to $\phi$ in $\wp$. Therefore, $J$ satisfies the P. S.  condition on $N_g.$ 
\end{proof}

\section{Proof of main theorems}

In this section, we prove all our main theorems. 
\subsection{The existence  and some of the properties of the first eigenvalue}
\noi{\textbf{Proof of Theorem \ref{Steklov existence}}}: \\
First, recall that 
$$
\la_1 =  \inf_{\phi \in N_g} \dis \abs{\Gr \phi}^p. 
$$ 
From Lemma \ref{Poin}, we clearly have $\la_1 > 0$. Since the functional $J$ is coercive on $N_g $, a sequence that minimizes $J$ over $N_g$  will be bounded and hence admits a weakly convergent subsequence that converges to say  $\phi_1$. As $N_g$ is weakly closed, $\phi_1\in N_g$ and $J(\phi_1)=\la_1.$ Thus $\la_1$ is the minimum of $J$ on $N_g$ and hence $\norm{\dJ(\phi_1)} = 0.$ Now from  Remark \ref{Manifold}, we obtain 
\begin{align}\label{weak form of first}
     \dis |\Gr \phi_1|^{p-2} \Gr \phi_1 \cdot \Gr v\; \dx = \la_1 \dispa  g \abs{\phi_1}^{p-2}\phi_1 v \; \dsg, \quad \forall v \in \wp.
\end{align}

\noi{\it $\la_1$ is a principal eigenvalue}: Clearly $\abs{\phi_1}$ is also an eigenfunction of \eqref{Steklov weight} corresponding to $\la_1$. Moreover, as $\abs{\phi_1}$ is $p$-harmonic, $\abs{\phi_1} \in C^{1, \alpha}(\Om)$. Since $\abs{\phi_1} \geq 0$, by the maximum principle in \cite[Theorem 5]{Vazquez}, $\abs{\phi_1} > 0$ in $\Om$. Without loss of generality we may  assume $\phi_1>0$ in $\Om.$ We show that $\phi_1$ is positive also on $\partial \Om.$
For $\ep > 0$, consider the function $\frac{\phi_1}{\phi_1+ \ep}$. It is easy to verify that  $\frac{\phi_1}{\phi_1+ \ep} \in \wp$ and $\frac{\phi_1}{\phi_1+ \ep}  \rightarrow 1$ in $L^p(\Om).$ We show that $\frac{\phi_1}{\phi_1+ \ep}  \rightarrow 1$ in $\wp$ as well. This together with trace embedding will ensure that $\phi_1 > 0$ in $\overline{\Om}$.
Thus it is enough to prove  $\Gr \frac{\phi_1}{\phi_1+ \ep} \rightarrow 0$ in $L^p(\Om)$ as $\ep\ra 0.$ Notice that,
\begin{align}\label{bounded}
     \left|{\Gr \frac{\phi_1}{\phi_1 + \ep}}\right|^p = \left( \frac{\ep}{\phi_1 + \ep} \right)^p \frac{\abs{\Gr \phi_1}^p}{(\phi_1 + \ep)^p} \leq \frac{\abs{\Gr \phi_1}^p}{\phi_1^p}.
\end{align}
Furthermore, by taking $ \frac{1}{(\phi_1 + \ep)^{p-1}} \in \wp$ as a test function in \eqref{weak form of first}, we obtain
\begin{align*}
     (p-1)\dis \frac{\abs{\Gr \phi_1}^p}{(\phi_1 + \ep)^p} = \la_1 \dispa g  \left( \frac{\phi_1}{\phi_1 + \ep} \right)^{p-1} \leq \la_1 \dispa \abs{g}.   
\end{align*}
We apply Fatou's lemma and let $\ep\ra 0$ in the above inequality to get  
\begin{align*}
     (p-1)\dis \frac{\abs{\Gr \phi_1}^p}{\phi_1^p} \leq  \la_1 \dispa \abs{g}. 
\end{align*} 
Now \eqref{bounded} together with the dominated convergence theorem ensures that $\Gr \frac{\phi_1}{\phi_1+ \ep} \rightarrow 0$ in $L^p(\Om)$.

\noi{\it The uniqueness and the simplicity}: The usual arguments (for example, see  \cite[Lemma 3.1]{Torne} for a proof) using the Picone's identity  \cite[Theorem 1.1]{Picone} gives the  uniqueness of the positive principal eigenvalue  and the simplicity of $\la_1$.

\noi{\it $\la_1$ is an isolated eigenvalue}: We adapt the proof of \cite[Proposition 2.12]{ADSS}. On the contrary, we suppose that there exists a sequence $(\la_n)$ of eigenvalues of \eqref{Steklov weight} converging to $\la_1$. For each $n\in \N$, let  $\psi_n \in N_g$ be an eigenfunction corresponding to $\la_n$. Then $J(\psi_n) = \la_n \rightarrow \la_1$ and 
$$
\big< (J' - \la_n G')(\psi_n), \psi_n \big> = (J - \la_n G)(\psi_n) = 0, 
$$ 
i.e., $\norm{\dJ(\psi_n)} = 0$. Hence using Lemma \ref{PS} and the continuity of $J'$ and $G'$, we get $\psi_n \rightarrow \psi$, an  eigenfunction corresponding to $\la_1$. Since $\la_1$ is simple, $\psi = \pm \phi_1,$ where $\phi_1$ is a first eigenfunction such that $\phi_1>0$ on $\overline{\Om}.$ If we let  $\psi= {\phi_1}$, then by Egorov's theorem there exists $E \subset \Om$ and $n_1\in \N$ such that $\abs{E} < \ep$  and $\psi_n^{-}= 0$ a.e. in $E^c$ for  $n \geq n_1.$  Also from \eqref{Steklov weight} we have 
\begin{align*}
     \dis \abs{\Gr \psi_n^{-}}^p = \la_n \dispa g \abs{\psi_n^{-}}^p.
\end{align*}
 Notice that $\int_{\Om} \abs{\Gr \psi_n^{-}}^p\neq 0,$ since $\psi_n$ changes sign on $\Om.$ Now by setting $v_n= (\int_{\pa \Om} g \abs{\psi_n^{-}}^p)^{-\lfrac{1}{p}} \psi_n^{-},$ we have $v_n \in N_g$ and $\int_{\Om} \abs{\Gr v_n}^p = \la_{n} \rightarrow \la_1.$ Therefore,  $v_n$ must converge to ${\phi_1},$ a contradiction as  $v_{n} = 0$ a.e. in $E^c$ for $n\ge n_1$. Thus  $\la_1$ must be an  isolated eigenvalue.  \qed

\begin{remark}\label{Stricly contained}
\begin{enumerate}[(a)]
    \item Let $$g \in \left\{\begin{array}{ll}
                              L^{\frac{N-1}{p-1}, \infty}(\pa \Om) & \text{  for } N > p,\\
                              L^{1, \infty; N}(\pa \Om) & \text{  for } N = p.
                       \end{array}\right.$$ 
          Then $\frac{1}{\la_1}$ is the best constant in the following weighted trace inequality:
          \begin{align*}
            \dispa \abs{g} \abs{\phi}^p &\leq C  \dis \abs{\Gr \phi}^p, \quad \forall  \phi \in \wp. 
          \end{align*}
           In addition, if $g$ satisfy all the assumptions of  Theorem \ref{Steklov existence}, then this best constant is also attained.
    \item  Since $\Om$ is bounded, we have  
            $$ L^q(\pa \Om)\subset L^{\llfrac{N-1}{p-1}}(\pa \Om), \; \forall q > \frac{N-1}{p-1}, \: \text{ and } \: \;     L^q(\pa \Om) \subset \G_1,\; \forall q \in (1, \infty). 
            $$ 
            Thus,  Theorem 1.2 of  \cite{Torne} follows from Theorem \ref{Steklov existence}. Furthermore, Example \ref{Ex2}  and Example \ref{Ex3} give examples  of weight functions for which  Theorem 1.2 of \cite{Torne} is not applicable, however admits a positive principal eigenvalue by our Theorem \ref{Steklov existence}.
\end{enumerate}
\end{remark}

\begin{remark}
For $g$ as given in Theorem \ref{Steklov existence}, the functional $J$ and the set $N_g$ satisfy all the properties of  \cite[Theorem 5.3]{Kavian}. Therefore, by \cite[Theorem 5.3]{Kavian}, there exists a sequence of eigenvalues $(\la_n)$ of \eqref{Steklov weight} and the sequence $(\la_n)$ is unbounded.
\end{remark}

\subsection{Bifurcation}

For proving Theorem \ref{bifur}, we adapt the degree theory arguments given in \cite{Drabek-Huang}, also see \cite{Anoopthesis}. We split our proof into several lemmas and propositions. 
\begin{Lemma}\label{coercive}
Let  $g^+ \not \equiv 0,$  $\int_{\pa \Om} g < 0,$ and 
$$ g \in \left\{\begin{array}{ll}                                                                
                  \F_{\frac{N-1}{p-1}} & \text{  for } N > p,\\
                  \G_1 & \text{  for } N = p.
                \end{array} \right.
                $$ 
Let $(\phi_n)$ be a sequence in $\wp$ such that 
\begin{align}\label{ineq5}
\dis \abs{\Gr \phi_n}^p - \la \dispa g \abs{\phi_n}^p < C
\end{align} 
for some $C>0$ and $\la>0.$ If $(\norm{\Gr\phi_n}_p)$ is bounded, then $(\norm{\phi_n}_p)$ is bounded.
\end{Lemma}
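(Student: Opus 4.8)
The plan is to argue by contradiction, imitating the normalization trick already used in the proof of Lemma~\ref{Poin}. Suppose the conclusion fails: there is a subsequence (not relabelled) with $\norm{\Gr \phi_n}_p$ bounded but $\norm{\phi_n}_p \to \infty$. Set $w_n = \norm{\phi_n}_p^{-1} \phi_n$, so that $\norm{w_n}_p = 1$ for all $n$ and $\norm{\Gr w_n}_p = \norm{\phi_n}_p^{-1}\norm{\Gr \phi_n}_p \to 0$. Hence $(w_n)$ is bounded in $\wp$, and after passing to a further subsequence we may assume $w_n \rightharpoonup w$ in $\wp$. By weak lower semicontinuity of $\norm{\Gr \cdot}_p$ we get $\norm{\Gr w}_p = 0$, so $w$ is constant on the connected set $\overline{\Om}$, say $w \equiv c$, and by the compact embedding $\wp \embd L^p(\Om)$ we have $w_n \to w$ in $L^p(\Om)$, whence $\norm{w}_p = 1$ and $c \neq 0$.

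Next I would divide \eqref{ineq5} by $\norm{\phi_n}_p^p$ to obtain
\begin{align*}
\norm{\Gr w_n}_p^p - \la \dispa g\abs{w_n}^p < \frac{C}{\norm{\phi_n}_p^p}.
\end{align*}
The left-hand first term tends to $0$. For the boundary term, $G$ is compact by Proposition~\ref{G cpt}, so $\dispa g\abs{w_n}^p \to \dispa g\abs{w}^p = \abs{c}^p \dispa g = \abs{c}^p \cdot \frac{1}{|\Om|}\int_{\pa\Om} g$ — actually, since $\abs{c}^p|\Om| = 1$, this equals $\frac{1}{|\Om|}\int_{\pa\Om}g < 0$ by hypothesis. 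The right-hand side tends to $0$ as $\norm{\phi_n}_p \to \infty$. Passing to the limit gives $0 - \la \cdot \frac{1}{|\Om|}\int_{\pa\Om}g \leq 0$, i.e. $-\frac{\la}{|\Om|}\int_{\pa\Om}g \leq 0$; since $\la > 0$ and $\int_{\pa\Om} g < 0$, the left-hand side is strictly positive, a contradiction. Therefore $(\norm{\phi_n}_p)$ must be bounded.

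The only slightly delicate point — and the one I would be most careful about — is justifying the passage to the limit in the boundary integral: we need the compactness of $G$ on weakly convergent sequences, which is exactly Proposition~\ref{G cpt} and requires $g \in \F_{(N-1)/(p-1)}$ (resp. $\G_1$), precisely the hypothesis imposed here. The other steps (normalization, weak compactness, the constancy of $w$ via connectedness, and the compact trace/Sobolev embeddings) are routine and already appear verbatim in the proof of Lemma~\ref{Poin}, so I would keep their exposition brief and simply refer back to that argument where convenient.
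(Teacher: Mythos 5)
Your proposal is correct and follows essentially the same route as the paper's own proof: normalize by $\norm{\phi_n}_p$, extract a weakly convergent subsequence, use weak lower semicontinuity and connectedness to identify the limit as a nonzero constant, divide \eqref{ineq5} by $\norm{\phi_n}_p^p$, and invoke the compactness of $G$ (Proposition~\ref{G cpt}) to pass to the limit and reach a contradiction with $\int_{\pa\Om} g < 0$. No gaps; the exposition matches the paper's argument step for step.
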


\begin{proof}  Our proof is by method of contradiction. Suppose that the sequence $(\norm{\Gr\phi_n}_p)$ is bounded and $\norm{\phi_n}_p\ra \infty$ as $n\ra \infty.$  By setting $w_n = \norm{\phi_n}^{-1}_p \phi_n,$ we obtain  $\norm{w_n}_p = 1$ and $\norm{\Gr w_n}_p \rightarrow 0$ as $n \rightarrow \infty.$ Thus there exists a subsequence $(w_{n_k})$ of $(w_{n})$ such that $w_{n_k} \rightharpoonup w$ in $\wp$. Now the weak lowersemicontinuity of $\norm{\Gr \cdot}_p$ gives $\norm{\Gr w}_p=0$. Since $\Om$ is connected, we get $w=c$ a.e. in $\overline{\Om}$ and from the compactness of the embedding of $\wp$ into $L^p(\Om)$, $\abs{c} |\Om|^{\frac{1}{p}} = 1$. Thus $\int_{\pa \Om} g \abs{w}^p = \frac{1}{|\Om|} \int_{\pa \Om} g < 0$. On the other hand from  \eqref{ineq5} we also have
\begin{align*}
    \dis \abs{\Gr w_{n_k}}^p - \la \dispa g \abs{w_{n_k}}^p \leq \frac{C}{\norm{\phi_{n_k}}^p_p}. 
\end{align*}
Now we let $k \rightarrow \infty$ so that the compactness of $G$ gives $- \la \int_{\pa \Om} g \abs{w}^p \leq 0$.  A contradiction to $\int_{\pa \Om} g \abs{w}^p < 0$. 
\end{proof}

In the next proposition, for $\la\in (0,\la_1+\de),$ we find a lower estimate of the functional $J-\la G.$ 
\begin{proposition}\label{bdd below}
Let $ \de > 0$ and let $\la\in(0,\la_1+\de)\setminus{\la_1}.$ Then for $\phi\in \wp\setminus\{0\},$ 
\begin{align}\label{bounded below}
J(\phi) - \la G(\phi) >\left\{\begin{array}{ll}
                              0, & \text{ if } \la\in(0,\la_1); \\
                              \frac{-\de}{\la_1}J(\phi), & \text{ if } \la\in(\la_1,\la_1+\de). 
                              \end{array}
                        \right.
\end{align}
\end{proposition}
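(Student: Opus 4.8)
The plan is to exploit the variational characterization $\la_1 = \inf_{\phi \in N_g} J(\phi)$ from Theorem \ref{Steklov existence}, together with the Poincaré-type coercivity inequality of Lemma \ref{Poin}. The key observation is a homogeneity reduction: both $J$ and $G$ are positively $p$-homogeneous, so it suffices to understand the sign of $J(\phi) - \la G(\phi)$ on a suitable normalization of $\phi$. I would split into the three cases according to the sign of $G(\phi)$.

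\textbf{Case $G(\phi) \le 0$.} Then $-\la G(\phi) \ge 0$ since $\la > 0$, and $J(\phi) = \int_\Om |\Gr \phi|^p > 0$ because $\phi \not\equiv 0$ (if $\Gr\phi \equiv 0$ then $\phi$ is a nonzero constant, whence $G(\phi) = c^p \int_{\pa\Om} g < 0$, still giving $-\la G(\phi) > 0$; in either subcase the sum is strictly positive). So $J(\phi) - \la G(\phi) > 0 \ge \min\{0, -\tfrac{\de}{\la_1} J(\phi)\}$, and the claimed bound holds in both ranges of $\la$.

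\textbf{Case $G(\phi) > 0$, i.e. $\phi \in M_g$.} Set $\psi = G(\phi)^{-1/p}\phi \in N_g$, so $G(\psi) = 1$ and, by homogeneity, $J(\phi) = G(\phi) J(\psi)$ and $G(\phi) = G(\phi) G(\psi)$. By the definition of $\la_1$ we have $J(\psi) \ge \la_1$, hence
\begin{align*}
J(\phi) - \la G(\phi) = G(\phi)\bigl(J(\psi) - \la\bigr) \ge G(\phi)(\la_1 - \la).
\end{align*}
If $\la \in (0,\la_1)$ this is $> 0$ since $G(\phi) > 0$ and $\la_1 - \la > 0$. If $\la \in (\la_1, \la_1 + \de)$, then $\la_1 - \la > -\de$, so $J(\phi) - \la G(\phi) > -\de\, G(\phi) = -\de\, J(\phi)/J(\psi) \ge -\tfrac{\de}{\la_1} J(\phi)$, using $J(\psi) \ge \la_1 > 0$ and $J(\phi) > 0$. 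To get the strict inequalities exactly as stated, note that when $\la \in (0,\la_1)$ strictness is immediate; when $\la \in (\la_1,\la_1+\de)$ one has $J(\psi) - \la > \la_1 - \la > -\de$ strictly unless $J(\psi) = \la_1$, but even then $J(\psi) - \la = \la_1 - \la > -\de$ is strict, so the final inequality $J(\phi) - \la G(\phi) > -\tfrac{\de}{\la_1}J(\phi)$ is strict throughout.

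\textbf{Main obstacle.} The only delicate point is bookkeeping the strict versus non-strict inequalities and handling the degenerate possibility $\Gr\phi \equiv 0$ (a nonzero constant function), which forces $G(\phi) < 0$ by the hypothesis $\int_{\pa\Om} g < 0$ and therefore lands in the first case; there is no real analytic difficulty, since everything reduces to the infimum property of $\la_1$ plus $p$-homogeneity. One should also record that the case $\la \in (0,\la_1)$ with $\phi \in M_g$ and the case $G(\phi)\le 0$ both yield the clean bound $J(\phi) - \la G(\phi) > 0$, so the two branches of \eqref{bounded below} are consistent.
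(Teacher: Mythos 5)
Your proof is correct and follows essentially the same route as the paper's: split on the sign of $G(\phi)$, dispose of the degenerate constant case via $\int_{\pa\Om} g < 0$, and for $G(\phi) > 0$ use the Rayleigh-quotient bound $\la_1 G(\phi) \le J(\phi)$ together with the strict inequality $\la_1 - \la > -\de$. Your normalization $\psi = G(\phi)^{-1/p}\phi \in N_g$ is a cosmetic repackaging of the paper's direct chain $J(\phi) - \la G(\phi) \ge (\la_1 - \la)G(\phi) > -\tfrac{\de}{\la_1}J(\phi)$.
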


\begin{proof}
 Firstly, for any $\la>0$  and  $\phi\in \wp\setminus\{0\},$ we consider the following  cases:
\begin{enumerate}[(i)]
      \item $G(\phi) \le 0$ and $J(\phi) > 0:$  clearly $J(\phi) - \la G(\phi) > 0.$
      \item $G(\phi)= 0$ and $J(\phi)=0:$   using the connectedness of $\Om$ and the fact that $\int_{\pa \Om} g <0$, we get $\phi =0$. So this case does not    arise, since $\phi\neq 0.$
      \item $G(\phi)>0:$ in this case $\la_1 \leq \frac{J(\phi)}{G(\phi)}.$ Thus for $\la\in(0,\la_1),$ we get  $J(\phi) - \la G(\phi) > 0$.
\end{enumerate}
  Secondly, for $\la \in (\la_1, \la_1 +\de)$ and $\phi \in \wp,$ we have
\begin{align}\label{lb1}
    J(\phi) - \la G(\phi)&= J(\phi) - \la_1 G(\phi) + (\la_1 - \la) G(\phi) \no \\ &\geq (\la_1 - \la) G(\phi) >\frac{\la_1 - \la}{\la_1} J(\phi)>- \frac{\de}{\la_1} J(\phi),
\end{align}
where the inequalities follow from the facts $J(\phi) - \la_1 G(\phi) \geq 0$  and $\la\in (\la_1,\la_1+\de) $.
\end{proof}

For  $\la \in (\la_1,\la_1+\de)$, we consider a differentiable function $\eta(t)$ such that
\begin{align}\label{df}
\eta(t)
 =\left\{\begin{array}{ll}
                0, &\; 0\le t \leq 1, \\
                \text {strictly convex}, &\, 1<t<2,\\
                \frac{2 \de}{\la_1}(t - 1), &\; t \geq 2.
         \end{array} 
 \right. 
\end{align}
Therefore,
\begin{align}\label{df1}
   \eta^{\prime}(t)= \;\left\{\begin{array}{ll} 
                               0, &\; 0\le t <1; \\ 
                               \frac{2 \de}{\la_1}, &\; t \ge 2,
                            \end{array} 
                        \right. \: \text { and } \: \eta^{\prime}(t)\ge 0,\; 1\le t\le 2.  
\end{align}
A similar function (with an additional parameter $k$) is considered in the proof of \cite[Theorem 4.1]{Drabek-Huang}. We would like to point out that, their  proof also works by fixing a value for $k$. Since,  the functional $J-\la G$ is not bounded below for $\la \in (\la_1,\la_1+\de)$, we add a non-negative term to it. 
The following result is proved  as a part of the proof of \cite[Theorem 4.1]{Drabek-Huang}.

\begin{Lemma}\label{map properties}
 Let $\la \in (\la_1,\la_1+\de)$ and let $\eta$ be  given as above. Then the functional $\eta_{\la}(\phi) = J(\phi) - \la G(\phi) + \eta(J(\phi))$ satisfies the following: 
 \begin{enumerate}[(a)]
     \item $\eta_\la$ is weakly lower semicontinuous.
     \item $\eta_\la$ is coercive.
     \item $\eta_\la$  is bounded below.
     \item there exists $R_0>0$ such that the map $\eta_\la' : \wp\ra (\wp)'$ does not vanish on $\partial B_R(0)$ for all $R\ge R_0.$
 \end{enumerate}
\end{Lemma}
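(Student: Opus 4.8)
The plan is to derive all four properties from the lower bound of Proposition \ref{bdd below} together with the coercivity input of Lemma \ref{coercive}, exploiting that the cut-off $\eta$ is designed precisely to neutralize the term $-\frac{\de}{\la_1}J(\phi)$ occurring there: for $J(\phi)$ large one has $\eta(J(\phi))=\frac{2\de}{\la_1}(J(\phi)-1)$, a linear term whose slope $\frac{2\de}{\la_1}$ strictly exceeds $\frac{\de}{\la_1}$.

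\noi\emph{Part (a).} I would note that $J$ is convex and continuous on $\wp$, hence weakly lower semicontinuous; since $\eta$ is continuous and nondecreasing (by \eqref{df1}, $\eta'\ge 0$), the composition $\eta\circ J$ is again weakly lower semicontinuous (a nondecreasing continuous function of a weakly l.s.c.\ functional is weakly l.s.c.); and $-\la G$ is weakly continuous because $G$ is compact (Proposition \ref{G cpt}). A sum of weakly l.s.c.\ maps is weakly l.s.c., which gives (a).

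\noi\emph{Parts (b) and (c).} By Proposition \ref{bdd below}, for $\phi\in\wp\setminus\{0\}$ one has $\eta_\la(\phi)>\psi(J(\phi))$, where $\psi(t):=\eta(t)-\frac{\de}{\la_1}t$. From \eqref{df}, $\psi(t)=\frac{\de}{\la_1}(t-2)\ge 0$ for $t\ge 2$, while $\psi$ is continuous on the compact interval $[0,2]$; hence $\psi$ is bounded below on $[0,\infty)$, which yields (c) (recalling $\eta_\la(0)=0$). For (b): suppose $\eta_\la$ fails to be coercive, so there is $(\phi_n)$ with $\norm{\phi_n}_{\wp}\to\infty$ and $\eta_\la(\phi_n)\le M$. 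If $(\norm{\Gr\phi_n}_p)$ is unbounded, pass to a subsequence with $J(\phi_n)\to\infty$; then $\eta_\la(\phi_n)>\psi(J(\phi_n))\to\infty$, a contradiction. Hence $(\norm{\Gr\phi_n}_p)$ is bounded, so $\norm{\phi_n}_p\to\infty$; but $\eta_\la(\phi_n)\ge J(\phi_n)-\la G(\phi_n)$ since $\eta\ge 0$, so $J(\phi_n)-\la G(\phi_n)<M+1$, and Lemma \ref{coercive} forces $(\norm{\phi_n}_p)$ to be bounded --- a contradiction. This proves (b).

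\noi\emph{Part (d).} Differentiating gives $\left<\eta_\la'(\phi),v\right>=(1+\eta'(J(\phi)))\left<J'(\phi),v\right>-\la\left<G'(\phi),v\right>$; testing with $v=\phi$ shows that any zero $\phi$ of $\eta_\la'$ satisfies $(1+\eta'(J(\phi)))J(\phi)=\la G(\phi)$. If $\phi\ne 0$, then $G(\phi)>0$ (otherwise $J(\phi)=0$, so $\phi$ is a nonzero constant $c$ and $G(\phi)=|c|^p\int_{\pa\Om}g<0$, impossible). Hence $\la_1\le J(\phi)/G(\phi)=\la/(1+\eta'(J(\phi)))$, so $1+\eta'(J(\phi))\le\la/\la_1<1+\de/\la_1$, i.e.\ $\eta'(J(\phi))<\de/\la_1$; since $\eta'(t)=\frac{2\de}{\la_1}$ for $t\ge 2$, this forces $J(\phi)<2$. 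Moreover $J(\phi)-\la G(\phi)=-\eta'(J(\phi))J(\phi)\le 0<1$, so Lemma \ref{coercive} yields a uniform bound $\norm{\phi}_p\le C_1$. Thus every zero of $\eta_\la'$ satisfies $\norm{\phi}_{\wp}^p=\norm{\Gr\phi}_p^p+\norm{\phi}_p^p<2+C_1^p$, and (d) holds with any $R_0>(2+C_1^p)^{1/p}$.

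The step I expect to require the most care is (d): producing an a priori bound on the zeros of $\eta_\la'$ needs two separate estimates --- first on $\norm{\Gr\phi}_p$, obtained from the variational characterization of $\la_1$ together with the exact form of $\eta'$ (this is exactly where the strict inequality $\frac{2\de}{\la_1}>\frac{\de}{\la_1}$ enters), and then on $\norm{\phi}_p$, obtained from Lemma \ref{coercive}. Since that lemma is phrased for sequences, the uniform bound over all zeros should itself be argued by contradiction along a sequence of zeros with $\norm{\phi_n}_p\to\infty$ whose gradients are bounded by $2^{1/p}$.
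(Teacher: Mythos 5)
Your parts (a), (b) and (c) are essentially the paper's arguments: (a) the same decomposition into a weakly l.s.c.\ $J$, a weakly continuous $-\la G$ (from compactness of $G$), and a nondecreasing continuous $\eta\circ J$; (b)--(c) the same lower bound $\eta_\la(\phi)>\eta(J(\phi))-\frac{\de}{\la_1}J(\phi)$ from Proposition \ref{bdd below}, with your $\psi$ merely a tidy name for it, and the same appeal to Lemma \ref{coercive} once $J(\phi_n)$ is controlled. Part (d), however, is a genuinely different route. You characterize the nontrivial zeros of $\eta_\la'$ directly: testing against $\phi$ gives $(1+\eta'(J(\phi)))J(\phi)=\la G(\phi)$, which forces $G(\phi)>0$, then $\eta'(J(\phi))\le\la/\la_1-1<\de/\la_1<\frac{2\de}{\la_1}$, hence $J(\phi)<2$, and finally $J(\phi)-\la G(\phi)=-\eta'(J(\phi))J(\phi)\le 0$, so Lemma \ref{coercive} (applied, as you note, via a contradiction along a sequence) bounds $\norm{\phi}_p$; you then choose $R_0$ beyond this a priori bound on the zero set. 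This is correct and proves exactly what the lemma states. The paper instead proves the stronger sign condition $\langle\eta_\la'(\phi),\phi\rangle>0$ for all $\phi\in\pa B_R(0)$, $R>R_0$, by splitting $\pa B_R(0)$ according to whether $J(\phi)>2$ (where $\frac{1}{p}\langle\eta_\la'(\phi),\phi\rangle\ge\frac{\de}{\la_1}J(\phi)$) or $\norm{\phi}_p^p>2/m$ (where Lemma \ref{Poin} forces $G(\phi)\le0$ and positivity is immediate). That extra strength is not cosmetic: it is precisely the hypothesis of Proposition \ref{degree}(ii) used in Proposition \ref{index}(b) to get $\deg(\eta_\la',\overline{B_R(0)},0)=1$. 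If you carry your version of (d) forward, you will need to supply the sign condition (or compute that degree another way, e.g.\ by observing $\eta_\la$ is a coercive potential whose zeros and minimizers all lie inside $B_R$) when you reach Proposition \ref{index}. So your proof of the lemma is valid, but the paper's proof is deliberately engineered to hand the next step exactly what it needs, and the statement of (d) understates what is actually established and used.
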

\begin{proof}
$(a)$  Let $\phi_n \rightharpoonup \phi$ in $\wp.$ Since $J$ is weakly lower semicontinuous, $G$ is compact and $\eta$ is increasing and continuous, we get
\begin{align*}
    \lowlim_{n \rightarrow \infty} \eta_{\la}(\phi_n) & = \lowlim_{n \rightarrow \infty} J(\phi_n) - \la \lowlim_{n \rightarrow \infty} G(\phi_n) +  \eta (\lowlim_{n \rightarrow \infty} (J(\phi_n))) \\
    & \geq J(\phi) - \la G(\phi) + \eta (J(\phi)) = \eta_{\la}(\phi).
\end{align*} 
Therefore, $\eta_\la$ is weakly lower semicontinuous. 

\noi $(b)$ Let $(\phi_n)$ be a sequence in $\wp$ such that $\eta_{\la}(\phi_n)\le C, \forall n\in \N.$ We show that the sequence  $(\phi_n)$ is bounded in $\wp.$ 
 From \eqref{bounded below}, we have
\begin{equation}\label{coercive 2}
    C\ge \eta_{\la}(\phi_n) >- \frac{\de}{\la_1} J(\phi_n)+\eta (J(\phi_n)), \quad  \forall n\in \N.  
\end{equation}
Thus, for $\phi_n$ with  $J(\phi_n) \geq 2,$ using the definition of $\eta,$ we have
$$
  C\ge \eta_{\la}(\phi_n) > - \frac{\de}{\la_1} J(\phi_n) + \frac{2 \de}{\la_1} (J(\phi_n) - 1) = \frac{\de}{\la_1} J(\phi_n) - \frac{2 \de }{\la_1}.
$$
Hence, $ J(\phi_n)\le  \displaystyle \max\Big\{2,\frac{\la_1 C}{\de}+2\Big\}. $
Now, we can use Lemma \ref{coercive} to obtain $C_1>0$ so that $\norm{\phi_n}_p\le C_1.$ 
Therefore, the sequence $(\phi_n)$ is bounded in $\wp$. 

\noi $(c)$ From \eqref{bounded below}, we have
$
\eta_{\la}(\phi) > - \frac{\de}{\la_1} J(\phi) + \eta (J(\phi)), \; \forall \phi\in \wp.
$
Therefore,
\begin{align*}
  \eta_{\la}(\phi) > \left\{\begin{array}{ll}
                                \frac{\de}{\la_1} J(\phi) - \frac{2 \de }{\la_1} > 0, & \; \text{ if }J(\phi) > 2; \\
                                - \frac{\de}{\la_1} J(\phi) + \eta (J(\phi)) \geq - \frac{2 \de }{\la_1}, &\;  \text{ if }J(\phi) \leq 2.
                              \end{array} 
                     \right.
\end{align*}
Thus $\eta_\la$ bounded below. 

\noi $(d)$ 
 By Lemma \ref{Poin}, there exists $m>0$ such that 
\begin{align}\label{eqn1}
    J(\phi) \geq m \norm{\phi}^p_p,\quad \forall\, \phi\in \wp \text{ with } G(\phi)>0.
\end{align}
 We choose $R_0=2(1+\frac{1}{m})$. Thus, for  $\phi\in \pa B_R(0)$ with $R>R_0,$  either $J(\phi)> 2$  or  $\norm{\phi}^p_p>  \frac{2}{m}.$
Notice that, 
$$
\big< \eta^{\prime}_{\la}(\phi), \phi \big> = p\Big(J(\phi) - \la G(\phi) + \eta'(J(\phi)) J(\phi)\Big).    
$$
Thus, using \eqref{bounded below}, we obtain
\begin{align*}
   \frac{1}{p} \big< \eta^{\prime}_{\la}(\phi), \phi \big> \geq - \frac{\de}{ \la_1} J(\phi) + {\eta}^{\prime}(J(\phi)) J(\phi).
\end{align*}
In particular, for $J(\phi)> 2$, we have
\begin{align*}
   \frac{1}{p} \big< \eta^{\prime}_{\la}(\phi), \phi \big> \geq   \frac{\de}{\la_1}J(\phi).
\end{align*}
 On the other hand, for  $J(\phi)\le 2,$ we have $\norm{\phi}^p_p>  \frac{2}{m}.$ Hence from \eqref{eqn1}, we conclude that $G(\phi)\le 0.$ Now from the part $(i)$ and $(ii)$ of proof of Proposition \ref{bounded below}, we get $\big< \eta^{\prime}_{\la}(\phi), \phi \big>>0.$
Therefore, $\eta^{\prime}_{\la}(\phi) \ne 0$ for $\phi\in \pa B_R(0)$ for any $R>R_0$. 
\end{proof}

Recall that a function $\phi \in \wp$ is a weak solution of \eqref{Steklov pertub}, if it satisfies the following weak formulation:
\begin{align*}
    \dis |\Gr \phi|^{p-2} \Gr \phi \cdot \Gr v - \la \dispa \left( g \abs{\phi}^{p-2} \phi v + f r(\phi)v \right) = 0, \quad \forall v \in \wp.
\end{align*}
Therefore, $\phi$ is a solution of \eqref{Steklov pertub} if and only if 
$$ 
\big< \left( J' - \la(G' + F) \right)(\phi), v \big>=0, \quad \forall v \in \wp.
$$

\begin{proposition}\label{class alpha 1}
The maps $J' - \la (G' + F)$  and  $J' - \la G'$  are well-defined maps from $\wp$ to its dual $(\wp)'.$ 
Moreover, these maps are bounded, demicontinuous and of class $\al(\wp)$. 
\end{proposition}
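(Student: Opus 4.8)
The plan is to read off each of the three required properties from the corresponding properties of the three constituent maps, which have already been established: $J'$ is continuous and of class $\al(\wp)$ (Proposition \ref{class alpha}), $G'$ is well defined and compact (Proposition \ref{G' cpt}), and $F$ is well defined, continuous and compact (Proposition \ref{compactmap1}). Well-definedness of $J'-\la(G'+F)$ and $J'-\la G'$ as maps from $\wp$ to $(\wp)'$ is then immediate, since a finite linear combination of such maps is again such a map. For boundedness I would note that the H\"older inequality gives $\norm{J'(\phi)}_{(\wp)'}\le p\norm{\Gr\phi}_p^{p-1}$, so $J'$ takes bounded sets to bounded sets, while $G'$ and $F$, being compact, take bounded sets to precompact (in particular bounded) sets; hence so does any linear combination. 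For demicontinuity I would use that $\phi_n\ra\phi_0$ in $\wp$ forces $J'(\phi_n)\ra J'(\phi_0)$, $G'(\phi_n)\ra G'(\phi_0)$ and $F(\phi_n)\ra F(\phi_0)$ in $(\wp)'$ (by continuity of each piece), so that the combined map is norm continuous and in particular demicontinuous.

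The substantive step is the class $\al(\wp)$ property. I would take a sequence $(\phi_n)\subset\wp$ with $\phi_n\wra\phi_0$ and
\begin{align*}
\uplim_{n\ra\infty}\big<(J'-\la(G'+F))(\phi_n),\phi_n-\phi_0\big>\le 0,
\end{align*}
and aim to conclude $\phi_n\ra\phi_0$ in $\wp$. Since $(\phi_n)$ is bounded and $\phi_n-\phi_0\wra 0$, the compactness of $G'$ and $F$ gives $G'(\phi_n)\ra G'(\phi_0)$ and $F(\phi_n)\ra F(\phi_0)$ strongly in $(\wp)'$, whence the pairing of a norm-convergent sequence with a weakly null sequence vanishes:
\begin{align*}
\big<(G'+F)(\phi_n),\phi_n-\phi_0\big>\ra 0.
\end{align*}
Subtracting this convergent quantity from the hypothesis yields $\uplim_{n\ra\infty}\big<J'(\phi_n),\phi_n-\phi_0\big>\le 0$, and since $J'$ is of class $\al(\wp)$ (Proposition \ref{class alpha}), this gives $\phi_n\ra\phi_0$ in $\wp$. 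The identical argument, with $F$ deleted throughout, handles $J'-\la G'$.

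The main (and essentially only) point requiring attention is the elementary but crucial observation that a completely continuous perturbation does not affect the $\uplim$-condition, precisely because $\big<(G'+F)(\phi_n),\phi_n-\phi_0\big>\ra 0$; everything else is a direct invocation of Propositions \ref{class alpha}, \ref{G' cpt} and \ref{compactmap1}. In other words, the proof rests on the stability of the class $\al(\wp)$ under compact perturbations, which is exactly the mechanism that makes the degree theory of \cite{Skrypnik} applicable to the operator $J'-\la(G'+F)$ in the sequel.
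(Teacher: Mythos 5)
Your proposal is correct and follows the same approach as the paper's (very terse) proof: invoke Propositions \ref{class alpha}, \ref{G' cpt} and \ref{compactmap1} for the individual pieces, and use the stability of class $\al(\wp)$ under compact (completely continuous) perturbations, which hinges on the observation that $\big<(G'+F)(\phi_n),\phi_n-\phi_0\big>\ra 0$ whenever $\phi_n\wra\phi_0$. Your write-up supplies the details that the paper leaves implicit, but there is no difference in substance.
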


\begin{proof}
From Proposition \ref{G' cpt}, Proposition \ref{class alpha}, and Proposition \ref{compactmap1}, we obtain  $J' - \la (G' + F)$ and $J' - \la G'$ are well defined, bounded and demicontinuous. Since $J'$ is of class $\al(\wp)$ and $G', F$ are compact,  the maps  $J' - \la (G' + F)$ and $J' - \la G' $ are of class $\al(\wp)$.
\end{proof}

\begin{proposition}\label{index}
Let $g, \la_1$ be as given in Theorem \ref{Steklov existence}. Then  there exists $\de>0$ such that  for each $ \la \in (0, \la_1 + \de) \setminus \{ \la_1 \},$ $ind(J' - \la G' ,0)$ is well defined. Furthermore, \begin{enumerate}[(a)]
    \item $ind(J' - \la G',0) = 1 \;$ for $\la \in (0, \la_1)$,
    \item $ ind(J' - \la G',0) = -1 \;$ for $\la \in (\la_1, \la_1 + \de)$.
\end{enumerate}
\end{proposition}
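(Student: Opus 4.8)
\textbf{Proof proposal for Proposition \ref{index}.}
The plan is to compute the index of the zero of $J'-\la G'$ by exploiting its variational structure, treating the ranges $\la\in(0,\la_1)$ and $\la\in(\la_1,\la_1+\de)$ separately. First I would fix $\de>0$ as the number supplied by Lemma \ref{map properties} and the isolatedness of $\la_1$ (Theorem \ref{Steklov existence}), so that $\la_1$ is the only eigenvalue of \eqref{Steklov weight} in $(0,\la_1+\de)$; this guarantees that for $\la$ in this range, $0$ is an isolated zero of $J'-\la G'$ (any nearby zero would be a nontrivial solution of \eqref{Sw weak}, hence force $\la$ to be an eigenvalue). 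Combined with Proposition \ref{class alpha 1}, which says $J'-\la G'\in A(\overline{B_R(0)},\pa B_R(0))$ for every $R$, the index $ind(J'-\la G',0)=\lim_{R\to 0}deg(J'-\la G',\overline{B_R(0)},0)$ is well defined.

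For part (a), $\la\in(0,\la_1)$: here $J-\la G$ is a potential for $J'-\la G'$, and by Proposition \ref{bdd below} (case $\la\in(0,\la_1)$) we have $J(\phi)-\la G(\phi)>0=J(0)-\la G(0)$ for all $\phi\neq 0$, so $0$ is a strict global minimum of the potential; since it is also an isolated zero of $J'-\la G'$, Proposition \ref{degree}(iv) gives $ind(J'-\la G',0)=1$.

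For part (b), $\la\in(\la_1,\la_1+\de)$: the potential $J-\la G$ is no longer bounded below, so I would instead argue by a homotopy/excision computation on a large ball using the modified functional $\eta_\la$ of Lemma \ref{map properties}. The idea is: on the ball $\overline{B_R(0)}$ with $R\ge R_0$ (from Lemma \ref{map properties}(d)), the potential $\eta_\la(\phi)=J(\phi)-\la G(\phi)+\eta(J(\phi))$ is coercive, weakly lower semicontinuous and bounded below, so it attains a global minimum at some $\phi^*$; since $\eta(J(\phi))=0$ near $0$ and $J-\la G$ takes negative values there (pick $\phi$ along the first eigenfunction $\phi_1$: $J(t\phi_1)-\la G(t\phi_1)=t^p\la_1 G(\phi_1)(1-\la/\la_1)<0$ for small $t$), the minimum value is strictly negative, so $\phi^*\neq 0$ and $\eta_\la'(\phi^*)=0$. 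One then shows $\eta_\la$ has only finitely many critical points in $B_R(0)$ (using the P.S. condition, Lemma \ref{PS}-type arguments, or by invoking the simplicity/isolatedness already established) and that on $\pa B_R(0)$ the map $\eta_\la'$ is homotopic, through $A(B_R(0),\pa B_R(0))$ maps that never vanish on $\pa B_R(0)$, both to $J'-\la G'$ (since $\eta(J(\phi))$ and its derivative vanish wherever $J(\phi)\le 1$, and for large $J(\phi)$ the added term is radially monotone so the straight-line homotopy $t\,\eta'(J(\phi))J'(\phi)$-term stays nonzero on the sphere, using Lemma \ref{map properties}(d) and Proposition \ref{bdd below}) and to $J'$ itself (whose degree on $\overline{B_R(0)}$ is $1$ by Proposition \ref{degree}(ii), since $\langle J'(\phi),\phi\rangle=p\,J(\phi)\ge 0$). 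Hence by Proposition \ref{degree}(i), $deg(J'-\la G',\overline{B_R(0)},0)=deg(\eta_\la',\overline{B_R(0)},0)=1$. On the other hand, by Proposition \ref{degree}(iii), $deg(\eta_\la',\overline{B_R(0)},0)=\sum ind(\eta_\la',\phi_i)$ summed over the critical points; the minimizers $\phi^*$ and (by the even symmetry $\phi\mapsto-\phi$) $-\phi^*$ are local minima and isolated zeros, so each contributes $+1$ by Proposition \ref{degree}(iv), forcing $ind(\eta_\la',0)=deg-2=-1$, and finally $ind(J'-\la G',0)=ind(\eta_\la',0)=-1$ because $\eta_\la'$ and $J'-\la G'$ coincide on a small ball around $0$ (there $J(\phi)\le 1$, so $\eta\equiv 0$).

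The main obstacle I expect is the bookkeeping in part (b): verifying rigorously that $\eta_\la$ has only finitely many (or at least isolated) critical points so that Proposition \ref{degree}(iii) applies, and checking that all the homotopies between $\eta_\la'$, $J'-\la G'$ and $J'$ stay in the admissible class $A$ and are nonvanishing on $\pa B_R(0)$ — the nonvanishing for $\la$ near $\la_1$ is delicate precisely because $J'-\la G'$ develops a near-kernel along $\phi_1$, and this is where the coercivity estimate Lemma \ref{coercive} and the lower bound Proposition \ref{bdd below} must be combined carefully. The isolated-eigenvalue conclusion of Theorem \ref{Steklov existence} is what ultimately rescues this, by ruling out a continuum of zeros near $0$.
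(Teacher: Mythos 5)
Your part (a) is correct; you use Proposition \ref{degree}(iv) (potential operator, local minimum) where the paper uses Proposition \ref{degree}(ii) directly from $\langle (J'-\la G')(\phi),\phi\rangle = p(J-\la G)(\phi)>0$, but both routes are valid.

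Part (b) has two concrete errors. First, you claim $\eta_\la'$ is homotopic on $\pa B_R(0)$ to $J'$ and that $deg(J',\overline{B_R(0)},0)=1$ follows from Proposition \ref{degree}(ii). But $J'(\phi)=0$ whenever $\Gr\phi=0$, and every sphere $\pa B_R(0)$ in $\wp$ contains nonzero constants (a constant $c$ has $\|c\|_{\wp}=|c|\,|\Om|^{1/p}$). So $J'$ vanishes on $\pa B_R(0)$, Proposition \ref{degree}(ii) does not apply, and the degree of $J'$ on a ball is not even well defined. Second, you assert $deg(J'-\la G',\overline{B_R(0)},0)=deg(\eta_\la',\overline{B_R(0)},0)$. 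This is false for $R\ge R_0$: in $\overline{B_R(0)}$, the map $J'-\la G'$ has $0$ as its only zero (since $\la$ is not an eigenvalue), and you ultimately derive $ind(J'-\la G',0)=-1$, so $deg(J'-\la G',\overline{B_R(0)},0)=-1\neq 1$. The straight-line homotopy must therefore cross zero on some intermediate sphere. Luckily you do not use that false equality in the final bookkeeping, but it is still a wrong step in the write-up.

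The missing ingredient is the \emph{direct} computation $deg(\eta_\la',\overline{B_R(0)},0)=1$ via Proposition \ref{degree}(ii) applied to $\eta_\la'$ itself: Lemma \ref{map properties}(d) shows $\langle\eta_\la'(\phi),\phi\rangle>0$ on $\pa B_R(0)$ for $R\ge R_0$, and this estimate (combining Proposition \ref{bdd below}, the Poincar\'e inequality from Lemma \ref{Poin}, and the growth of $\eta'$ when $J(\phi)>2$) is precisely what circumvents the vanishing of $J'$ at constants and the near-kernel of $J'-\la G'$ along $\phi_1$. You should also replace the vague ``finitely many critical points'' step by the paper's explicit characterization: any nonzero zero $\phi_0$ of $\eta_\la'$ satisfies $(J'-\mu G')(\phi_0)=0$ with $\mu=\la/(1+\eta'(J(\phi_0)))\in(0,\la_1+\de)$, forcing $\mu=\la_1$, $\phi_0=\pm c\phi_1$, and uniqueness of $c>0$ from strict monotonicity of $\eta'$ on $(1,2)$; then the count $deg=1=ind(\eta_\la',0)+1+1$ gives $ind(\eta_\la',0)=-1$ and, since $\eta_\la'\equiv J'-\la G'$ on $\{J(\phi)<1\}$, the result follows. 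Your overall architecture otherwise matches the paper's.
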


\begin{proof}
Since $\la_1$ is an isolated eigenvalue of \eqref{Steklov weight}, there exists $\de > 0$ such that $ \la \in (0, \la_1 + \de)\setminus \{ \la_1 \}$  is not an eigenvalue of \eqref{Steklov weight}. Thus for $ \la \in (0, \la_1 + \de)\setminus \{ \la_1 \}$,  0 is the only solution of $J' - \la G'$ and hence $ind(J' - \la G' ,0)$ is well defined.
 
\noi $(a)$  For $\la \in (0, \la_1)$, from \eqref{bounded below}, we have 
\begin{align*}
      \big< \big(J' - \la G' \big)(\phi), \phi \big> = p(J(\phi) - \la G(\phi)) > 0, \quad \forall \phi \in \wp \setminus \{0 \}.
\end{align*}
Therefore, by  Proposition \ref{degree}, $deg(J' - \la G', \overline{B_{r}(0)}, 0) =1$ for every $r>0$. Thus 
$$
ind(J' - \la G',0) = \lim_{ r \rightarrow 0} deg(J' - \la G', \overline{B_{r}(0)}, 0)  = 1.
$$

\noindent $(b)$ In this case, we adapt a technique used in the proof of \cite[Theorem 4.1]{Drabek-Huang}. First, we compute $ind(\eta_\la',0)$.  Clearly, $0$ is a zero of $\eta'_{\la}$. If $\phi_0 \neq 0$ is a zero of $\eta_{\la}',$ then $ \frac{\la}{1 + \eta^{\prime}(J(\phi_0))}$ is an eigenvalue of \eqref{Steklov weight} and $ \phi_0 $ is a corresponding eigenfunction. Since $ 0 < \frac{\la}{1 + \eta^{\prime}(J(\phi_0))} < \la_1 + \de$, we must have  $\frac{\la}{1 + \eta^{\prime}(J(\phi_0))} = \la_1$ and $\phi_0 = c \phi_1$ for some $c \in \R,$ where $\phi_1 $ is the first eigenfunction of \eqref{Steklov weight} normalized as  $\int_{\pa \Om} g\phi^p_1=1$ and $\phi_1 > 0$ in $\overline{\Om}$. Notice that,
\begin{align*}
    \eta^{\prime}(J(\phi_0))  = \frac{\la}{\la_1} - 1 \in \left( 0, \frac{\de}{\la_1} \right).  
\end{align*}
Thus from  \eqref{df1}, we assert that $J(\phi_0) \in (1, 2).$ Moreover,  since $\eta^{\prime}$ is strictly increasing in $(1, 2)$ and the functional $J$ is even, there exists a unique $c > 0$ such that $\phi_0 = \pm c \phi_1.$ Conversely, if we choose $c>0$  such that $\eta^{\prime}(J(c\phi_1))  = \frac{\la}{\la_1} - 1,$ then $\pm c\phi_1$ is a zero of $\eta_\la'.$ Therefore, the map $ \eta^{\prime}_{\la}$ has precisely three zeros $-c\phi_1, 0, c\phi_1$. Now we will show that $ind(\eta^{\prime}_{\la},  \pm c\phi_1) = 1$. It is enough to prove $\pm c \phi_1$ are the minimizers for $\eta_{\la}$. From Lemma \ref{map properties}, the functional  $\eta_{\la}$ is coercive, weak lowersemicontinuous and bounded below. Thus $\eta_{\la}$ admits a minimizer. Notice that, $\eta_{\la}(t\phi_1)=(\la_1-\la)t^pG(\phi_1)+\eta(t^pJ(\phi_1))$ and hence  $\eta_{\la}(t\phi_1)<0$ for sufficiently small $t>0.$ Thus $0$ is not a minimizer and hence $\pm c \phi_1$ are the only minimizers of $\eta_{\la} $. Therefore, by Proposition \ref{degree}, we get
\begin{align}\label{index1}
   ind(\eta^{\prime}_{\la}, \pm c\phi_1) = 1.    
\end{align}
For $R_0$ as given in Lemma \ref{map properties}, we choose $R>R_0,$ so that $\pm c \phi_1 \in B_{R}(0)$ and $\big< \eta^{\prime}_{\la}(\phi), \phi \big>>0$ for $\phi\in \pa B_R(0)$.  By Proposition \ref{degree}, $deg(\eta^{\prime}_{\la},  \overline{B_{R}(0)}, 0) = 1.$ Thus by the additivity of degree (Proposition \ref{degree}) and from \eqref{index1}, we obtain $deg(\eta^{\prime}_{\la}, \overline{B_{r}(0)}, 0) = -1$ for  sufficiently small $r > 0$. Since $\eta^{\prime}_{\la} = J' - \la G'$ on  ${B_{r}(0)}$ for $r<1$, we conclude that $ind(J' - \la G',0) = -1.$ 
\end{proof}

\begin{Lemma}\label{homotopy invariance}
Let $\la_1$ be given as in Theorem \ref{Steklov existence}. Then for $ \la \in (0, \la_1 + \de) \setminus \{ \la_1 \}$, $ ind(J' - \la (G' + F), 0) = ind(J' - \la G' ,0).$ 
\end{Lemma}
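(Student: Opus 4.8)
The plan is to prove the index equality by exhibiting an admissible homotopy between $J' - \la(G' + F)$ and $J' - \la G'$ on a small ball $\overline{B_r(0)}$, and then invoke the homotopy invariance of the degree (Proposition \ref{degree}(i)). Concretely, I would set
\begin{align*}
H_t(\phi) = J'(\phi) - \la G'(\phi) - t \la F(\phi), \quad t \in [0,1], \; \phi \in \overline{B_r(0)},
\end{align*}
so that $H_0 = J' - \la G'$ and $H_1 = J' - \la(G' + F)$. The first step is to check that each $H_t$ lies in $A(\overline{B_r(0)}, \pa B_r(0))$: boundedness and demicontinuity are immediate from Proposition \ref{class alpha 1} together with the boundedness and continuity of $F$ (Proposition \ref{compactmap1}), and each $H_t$ is of class $\al(\overline{B_r(0)})$ because $J'$ is of class $\al(\wp)$ while $G'$ and $F$ are compact, so adding $-t\la F$ does not affect the $\al$-property (exactly as in the proof of Proposition \ref{class alpha 1}). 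The joint continuity condition (b) in the definition of homotopy, namely $t_n \to t$ and $\phi_n \to \phi_0$ imply $H_{t_n}\phi_n \wra H_t\phi_0$, follows from the (demi)continuity of $J' - \la G'$ and the continuity of $F$ together with the boundedness of $(\phi_n)$.

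The main obstacle — and the only place the hypotheses on $r$ and $f$ really enter — is verifying that $H_t(\phi) \neq 0$ for all $\phi \in \pa B_r(0)$ and all $t \in [0,1]$, provided $r$ is chosen small enough. Here I would use the sublinearity of $F$ near the origin, i.e. Proposition \ref{Growth}, which gives $\norm{F(\phi)}_{(\wp)'} = o(\norm{\phi}_{\wp}^{p-1})$ as $\norm{\phi}_{\wp} \to 0$. On $\pa B_r(0)$, using the structure $\big< (J' - \la G')(\phi), \phi \big> = p(J(\phi) - \la G(\phi))$ and the lower bound from Proposition \ref{bdd below}, one has for $\la \in (0,\la_1)$ that $\big< (J' - \la G')(\phi), \phi \big> \ge p\,\delta_0\, \norm{\phi}_{\wp}^p$ for some $\delta_0 > 0$ (coercivity of $J - \la G$, using the Poincar\'e inequality Lemma \ref{Poin} on $M_g$ and the trace bound on $\Om \setminus M_g$), and for $\la \in (\la_1, \la_1 + \de)$ a corresponding estimate away from the line $\R \phi_1$; but since we only need $H_t \ne 0$ on a full sphere, the cleanest route is: pick $r$ so small that $\la \norm{F(\phi)}_{(\wp)'} < \norm{(J' - \la G')(\phi)}_{(\wp)'}$ for all $\phi \in \pa B_r(0)$, which is possible because $\norm{(J' - \la G')(\phi)}_{(\wp)'} \ge c\,\norm{\phi}_{\wp}^{p-1}$ on $\pa B_r(0)$ for small $r$ (as $0$ is an isolated zero of $J' - \la G'$, with the lower bound coming from testing against $\phi$ itself and the estimates of Proposition \ref{bdd below}), while $\la\norm{F(\phi)}_{(\wp)'} = o(r^{p-1})$. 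Then $H_t(\phi) = (J' - \la G')(\phi) - t\la F(\phi)$ cannot vanish, since its pairing-based norm is bounded below by $\norm{(J'-\la G')(\phi)}_{(\wp)'} - \la\norm{F(\phi)}_{(\wp)'} > 0$.

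With the homotopy admissibility established, Proposition \ref{degree}(i) yields $deg(H_0, \overline{B_r(0)}, 0) = deg(H_1, \overline{B_r(0)}, 0)$ for this small $r$, and letting $r \to 0$ in the definition of the index gives $ind(J' - \la(G' + F), 0) = ind(J' - \la G', 0)$, which is the claim. I expect the write-up to spend most of its length on the non-vanishing of $H_t$ on $\pa B_r(0)$; the remaining verifications are routine consequences of results already proved in Section 3. One technical point to handle carefully is that the lower bound $\norm{(J' - \la G')(\phi)}_{(\wp)'} \ge c \norm{\phi}_{\wp}^{p-1}$ near $0$ should be justified directly (e.g. via $\norm{(J'-\la G')(\phi)}_{(\wp)'} \ge \big< (J' - \la G')(\phi), \phi \big> / \norm{\phi}_{\wp}$ and Proposition \ref{bdd below}), rather than merely citing isolatedness of the zero, so that the choice of $r$ is quantitatively tied to Proposition \ref{Growth}.
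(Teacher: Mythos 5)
Your homotopy $H_t(\phi) = J'(\phi) - \la G'(\phi) - t\la F(\phi)$ is exactly the one the paper uses, and the admissibility checks you outline (boundedness, demicontinuity, $\al$-class via compactness of $G'$ and $F$) are correct. However, there is a genuine gap in the way you propose to establish the key non-vanishing of $H_t$ on small spheres. You claim the lower bound $\norm{(J' - \la G')(\phi)}_{(\wp)'} \ge c\,\norm{\phi}_{\wp}^{p-1}$ can be obtained by testing against $\phi$ together with Proposition~\ref{bdd below}. But testing against $\phi$ gives $p(J(\phi) - \la G(\phi))/\norm{\phi}_{\wp}$, and Proposition~\ref{bdd below} only shows $J(\phi) - \la G(\phi) > 0$ for $\la < \la_1$ (and a bound in terms of $J(\phi)$ for $\la > \la_1$). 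Since $J(\phi) = \int_\Om |\Gr\phi|^p$ vanishes on nonzero constants and is not comparable to $\norm{\phi}_{\wp}^p$, this does not yield a constant $c > 0$ with $J(\phi) - \la G(\phi) \ge c\norm{\phi}_{\wp}^p$; the quantity can be positive yet much smaller than $\norm{\phi}_{\wp}^p$. The Poincar\'e inequality of Lemma~\ref{Poin} does not rescue this either, as it is only available on $M_g$.

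The correct way to see the lower bound uses the $(p-1)$-homogeneity of $J' - \la G'$: it is equivalent to $\inf\{\norm{(J'-\la G')(v)}_{(\wp)'} : \norm{v}_{\wp} = 1\} > 0$, and proving that this infimum is positive is itself a compactness/contradiction argument using the $\al$-class property — precisely the content of the paper's proof, which bypasses the quantitative lower bound altogether. The paper argues by contradiction: if $H_\la(\phi_n, t_n) = 0$ with $\phi_n \ne 0$, $\norm{\phi_n}_{\wp} \to 0$, $t_n \in [0,1]$, then $v_n = \phi_n/\norm{\phi_n}_{\wp}$ satisfies (after dividing the equation by $\norm{\phi_n}_{\wp}^{p-1}$ and using Proposition~\ref{Growth}) $\langle (J' - \la G')(v_{n_k}), v_{n_k} - v\rangle \to 0$ along a weakly convergent subsequence; the $\al$-class property of $J' - \la G'$ (Proposition~\ref{class alpha 1}) then gives $v_{n_k} \to v$ strongly with $\norm{v}_{\wp}=1$ and $J'(v) - \la G'(v) = 0$, contradicting that $\la$ is not an eigenvalue. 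So your overall plan is the right one, but the step you singled out as the ``technical point to handle carefully'' cannot be closed by Proposition~\ref{bdd below} alone; it genuinely needs the normalize-and-pass-to-the-limit argument, which is what the paper supplies.
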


\begin{proof}
For $\la \in (0, \la_1 + \de) \setminus \{ \la_1 \},$ define $H_{\la}: \wp \times [0,1] \rightarrow (\wp)'$ as $$H_{\la}(\phi, t) = J'(\phi) - \la G' (\phi) - \la t F(\phi).$$ Clearly, $H_{\la}(.,0) = J' - \la G'$ and $H_{\la}(.,1) = J' - \la (G' + F).$ From Proposition \ref{class alpha 1}, for each $t \in [0,1]$, $H_{\la}(\cdot, t)$ is bounded, demicontinuous and of class $\al(\wp)$. We prove the existence of a sufficiently small $r > 0$ such that  for each $t \in [0,1], \; H_{\la}(.,t)$ does not vanish in $\overline{B_r(0)} \setminus \{ 0 \}.$ On the contrary, assume that no such $r$ exists. 
Then for any $r > 0$, there exists $t_r \in [0,1]$ and $\phi_r \in \wp \setminus \{ 0 \}$ such that  $\norm{\phi_r}_{\wp} \leq r$ and $H_{\la}(\phi_r,t_r) = 0.$  In particular, for a sequence of positive numbers $(r_n)$ converging to 0, there exist a sequence $t_n\in [0,1]$ and a sequence $\phi_n \in \wp \setminus \{ 0 \}$  such that  $\norm{\phi_n}_{\wp} \leq r_n$  and 
\begin{align}\label{homotopy}
   J'(\phi_n) -\la G'(\phi_n) - \la t_n F(\phi_n) = 0.   
\end{align}
If we set $v_n = \phi_n {\norm{\phi_n}^{-1}_{\wp}},$ then  $\norm{v_n}_{\wp} = 1$ and  hence admits a subsequence $(v_{n_k})$ such that $v_{n_k} \rightharpoonup v$ in $\wp$. From \eqref{homotopy} we also have
\begin{align*}
    \big<  J'(v_{n_k}) - \la G'(v_{n_k}), v_{n_k} - v \big>  = \la t_{n_k} \left< \frac{F(\phi_{n_k})}{\norm{\phi_{n_k}}^{p-1}_{\wp}}, v_{n_k} - v \right>.
\end{align*}
 By  Proposition \ref{Growth}, the right hand side of the above inequality goes to zero as $k\ra \infty.$ Therefore, 
$$ 
\lim_{k \rightarrow \infty} \big< J'(v_{n_k}) - \la G'(v_{n_k}), v_{n_k} - v \big> = 0. 
$$
Now, since $J' - \la G'$ is of class $\al(\wp)$ (Proposition \ref{class alpha 1}),  we get $v_{n_k} \rightarrow v$ as $k \rightarrow \infty$. Thus  using \eqref{homotopy}, we deduce that
$J'(v) - \la G'(v) = 0$ and $\norm{v}_{\wp} = 1.$
A contradiction, as $\la \in (0, \la_1 + \de) \setminus \{ \la_1 \}$ is not an eigenvalue of \eqref{Steklov weight}. Therefore,  there exists $R > 0$ such that $H_\la(.,t)$ does not vanish in  $\overline{B_R(0)} \setminus \{ 0 \}.$ Thus 0 is an isolated zero of $H(.,t)$ for any $t\in [0,1]$. Hence by homotopy invariance of degree (Propostion \ref{degree}), we obtain
\begin{align}\label{index difference}
 ind(J' - \la (G' + F),0) = ind(J' - \la G' ,0)=\left\{\begin{array}{ll}
                                                          1, &\; \text{for}  \; \la \in (0, \la_1); \\ 
                                                         -1, &\; \text{for} \; \la \in (\la_1, \la_1 + \de).
                                                        \end{array}\right. 
\end{align}
\end{proof}

The following theorem gives a sufficient condition  \cite[Theorem 7.5, Page-61]{Skrypnik} under which $\la_1$ is a bifurcation point of \eqref{Steklov pertub}.

\begin{theorem}\label{Bifurcation}
Let $\la_1$ be given as in Theorem \ref{Steklov existence} and $g, r, f$ be given as in Theorem \ref{bifur}. Let
\begin{align*}
    \overline{i}^{\pm} =  \uplim_{\la \rightarrow \la_1 \pm 0} ind(J' - \la (G' + F),0); \quad  \underline{i}^{\pm} =  \lowlim_{\la \rightarrow \la_1 \pm 0} ind(J' - \la (G' + F),0). 
\end{align*}
If at least two of the numbers $\overline{i}^+, \underline{i}^+, \overline{i}^-,  \underline{i}^-, ind(J' - \la (G' + F),0) $ are distinct, then $\la_1$ is a bifurcation point of \eqref{Steklov pertub}. 
\end{theorem}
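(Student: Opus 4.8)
The plan is to prove the theorem by contradiction, using the homotopy invariance of the degree (Proposition \ref{degree}); the statement is essentially a specialization of \cite[Theorem 7.5]{Skrypnik} to the operators at hand, and I would follow its proof. Assume that $\la_1$ is \emph{not} a bifurcation point of \eqref{Steklov pertub}. By the definition of a bifurcation point there is then no sequence of nontrivial solutions of \eqref{Steklov pertub} converging to $(\la_1,0)$, so one can find $\ep_0>0$ and $r_0>0$ such that $(J'-\la(G'+F))(\phi)\neq 0$ for every $\la\in[\la_1-\ep_0,\la_1+\ep_0]$ and every $\phi\in\overline{B_{r_0}(0)}\setminus\{0\}$. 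The first step is to record the consequences: for each such $\la$, the point $0$ is an isolated zero of $J'-\la(G'+F)$, this map does not vanish on $\partial B_r(0)$ for any $r\in(0,r_0]$, and by Proposition \ref{class alpha 1} it belongs to $A(B_r(0),\partial B_r(0))$; hence $ind(J'-\la(G'+F),0)$ is well defined and coincides with $deg(J'-\la(G'+F),\overline{B_r(0)},0)$ for every $r\in(0,r_0]$.

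The key step is to show that $\la\mapsto ind(J'-\la(G'+F),0)$ is constant on the whole interval $[\la_1-\ep_0,\la_1+\ep_0]$. For this I would fix $r\in(0,r_0]$, take arbitrary $\la',\la''$ in the interval, put $\la_t=(1-t)\la'+t\la''$ for $t\in[0,1]$, and consider the affine family $H_t=J'-\la_t(G'+F)$ joining $J'-\la'(G'+F)$ to $J'-\la''(G'+F)$. I would then check that $\{H_t\}_{t\in[0,1]}$ satisfies the two conditions required of a homotopy between maps of class $\al$: since $\la_t\in[\la_1-\ep_0,\la_1+\ep_0]$ for all $t$, each $H_t$ is bounded, demicontinuous and of class $\al(\wp)$ by Proposition \ref{class alpha 1} and does not vanish on $\partial B_r(0)$ by the non-vanishing recorded above; moreover, if $t_n\to t$ and $\phi_n\to\phi_0$ in $\wp$, then $\la_{t_n}\to\la_t$ and, by the continuity of $J'$, $G'$ and $F$ (Propositions \ref{class alpha}, \ref{G' cpt} and \ref{compactmap1}), $H_{t_n}\phi_n=J'(\phi_n)-\la_{t_n}(G'(\phi_n)+F(\phi_n))\to H_t\phi_0$ in $(\wp)'$, in particular weakly. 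Homotopy invariance of the degree (Proposition \ref{degree}) then yields $deg(H_0,\overline{B_r(0)},0)=deg(H_1,\overline{B_r(0)},0)$, i.e. $ind(J'-\la'(G'+F),0)=ind(J'-\la''(G'+F),0)$.

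It follows that $ind(J'-\la(G'+F),0)$ takes a single value, say $c$, for every $\la\in[\la_1-\ep_0,\la_1+\ep_0]$, including $\la=\la_1$. In particular $\overline{i}^{\pm}=\underline{i}^{\pm}=c$, and the index $ind(J'-\la(G'+F),0)$ at $\la_1$ (which is well defined under the present assumption) also equals $c$. Hence no two of the five numbers appearing in the statement are distinct, contradicting the hypothesis; therefore $\la_1$ must be a bifurcation point of \eqref{Steklov pertub}.

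I expect the one delicate point to be the verification that the affine family $\{H_t\}_{t\in[0,1]}$ meets the admissibility conditions for a homotopy of maps of class $\al$, with the spectral parameter $\la$ playing the role of the deformation parameter; once Propositions \ref{class alpha 1}, \ref{G' cpt} and \ref{compactmap1} are available this is a routine continuity argument, and the remainder of the proof is an immediate consequence of the homotopy invariance of the Skrypnik degree.
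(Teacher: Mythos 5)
The paper does not prove this statement at all: it is presented as a direct citation of Skrypnik \cite[Theorem 7.5, Page-61]{Skrypnik}, with no argument supplied. Your proposal therefore supplies a proof where the paper gives none, and it is the correct one, matching the standard Skrypnik argument: if $\la_1$ were not a bifurcation point, then for some $\ep_0,r_0>0$ the map $J'-\la(G'+F)$ has no nontrivial zero in $\overline{B_{r_0}(0)}$ for $\la\in[\la_1-\ep_0,\la_1+\ep_0]$, so the index is well defined at each such $\la$ and, by homotopy invariance along the affine path in $\la$, constant on the whole interval — forcing $\overline{i}^\pm=\underline{i}^\pm=ind(J'-\la_1(G'+F),0)$ and contradicting the hypothesis. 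Your verification of the homotopy admissibility conditions is sound: Proposition \ref{class alpha 1} gives that each $H_t$ is bounded, demicontinuous and of class $\al(\wp)$, and the strong continuity of $J'$ (Proposition \ref{class alpha}) together with the compactness (hence weak-to-strong continuity) of $G'$ and $F$ (Propositions \ref{G' cpt}, \ref{compactmap1}) gives $H_{t_n}\phi_n\to H_t\phi_0$ strongly, which is stronger than the weak convergence the definition requires.
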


\begin{theorem}\label{bifurcation}
Let $\la_1$ be given as in Theorem \ref{Steklov existence} and $g, r, f$ be given as in Theorem \ref{bifur}. Then $\la_1$ is a bifurcation point of \eqref{Steklov pertub}.
\end{theorem}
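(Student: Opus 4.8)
The plan is to apply Theorem \ref{Bifurcation} directly, so the entire task reduces to computing (or at least comparing) the five indices $\overline{i}^+, \underline{i}^+, \overline{i}^-, \underline{i}^-$ and $ind(J' - \la_1(G' + F),0)$, and showing that at least two of them differ. The work for this has essentially been done already: Lemma \ref{homotopy invariance} establishes that for every $\la \in (0,\la_1+\de)\setminus\{\la_1\}$ we have $ind(J' - \la(G'+F),0) = ind(J'-\la G',0)$, and Proposition \ref{index} evaluates the right-hand side as $+1$ for $\la\in(0,\la_1)$ and $-1$ for $\la\in(\la_1,\la_1+\de)$. Consequently the index $ind(J' - \la(G'+F),0)$ is constant and equal to $+1$ on the whole left interval $(0,\la_1)$ and constant and equal to $-1$ on the right interval $(\la_1,\la_1+\de)$.

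First I would observe that the limits defining $\overline{i}^\pm$ and $\underline{i}^\pm$ are therefore trivial to compute: since $ind(J' - \la(G'+F),0)$ is eventually constant as $\la\to\la_1-0$ and as $\la\to\la_1+0$, the upper and lower limits coincide with those constant values, giving $\overline{i}^- = \underline{i}^- = 1$ and $\overline{i}^+ = \underline{i}^+ = -1$. Since $1 \neq -1$, at least two of the five numbers in the statement of Theorem \ref{Bifurcation} are distinct (indeed $\overline{i}^-$ and $\overline{i}^+$ already suffice, regardless of whatever value $ind(J'-\la_1(G'+F),0)$ might take — note it need not even be well-defined at $\la_1$, but the hypothesis of Theorem \ref{Bifurcation} only requires two of the listed numbers to be distinct among those that make sense). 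Hence Theorem \ref{Bifurcation} applies and yields that $\la_1$ is a bifurcation point of \eqref{Steklov pertub}.

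There is essentially no obstacle here: all the analytic content — the functional-analytic setup of $J', G', F$ as bounded demicontinuous maps of class $\al(\wp)$ (Proposition \ref{class alpha 1}), the small-perturbation estimate $\norm{F(\phi)}/\norm{\phi}^{p-1}\to 0$ (Proposition \ref{Growth}), the isolatedness and simplicity of $\la_1$ (Theorem \ref{Steklov existence}), the coercivity and index computations via the auxiliary functional $\eta_\la$ (Lemma \ref{map properties}, Proposition \ref{index}), and the homotopy invariance transferring the index from $J'-\la G'$ to $J'-\la(G'+F)$ (Lemma \ref{homotopy invariance}) — has already been assembled in the preceding results. The only care needed is to make sure the hypotheses of Theorem \ref{Bifurcation} are verified verbatim, namely that $J'-\la(G'+F)$ is of the admissible class so that its index is defined for $\la$ near $\la_1$ (this is Proposition \ref{class alpha 1} together with the fact that such $\la\neq\la_1$ are not eigenvalues of \eqref{Steklov weight}), and then to quote the jump $1 \to -1$ of the index across $\la_1$. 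I would write the proof in two or three lines: invoke Lemma \ref{homotopy invariance} and Proposition \ref{index} to get $\overline{i}^-=\underline{i}^-=1$ and $\overline{i}^+=\underline{i}^+=-1$, note these are distinct, and conclude by Theorem \ref{Bifurcation}.
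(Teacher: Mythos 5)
Your proposal is correct and follows essentially the same route as the paper: both invoke Lemma \ref{homotopy invariance} together with Proposition \ref{index} to get the constant index values $+1$ on $(0,\la_1)$ and $-1$ on $(\la_1,\la_1+\de)$, then read off the one-sided limits and apply Theorem \ref{Bifurcation}. The only cosmetic difference is that you evaluate all four quantities $\overline{i}^\pm,\underline{i}^\pm$ while the paper records just $\overline{i}^+=-1$ and $\underline{i}^-=1$, which already suffice.
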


\begin{proof}
From Proposition \ref{index} and Lemma \ref{homotopy invariance}, we have
\begin{align*}
 ind(J' - \la (G' + F),0) = \left\{\begin{array}{ll}
                                      1, &\; \text{for} \; \la \in (0, \la_1); \\ 
                                     -1, &\; \text{for} \; \la \in (\la_1, \la_1 + \delta).
                                     \end{array} 
                             \right.
\end{align*}
Therefore,
\begin{align*}
    \overline{i}^{+} =  \uplim_{\la \rightarrow \la_1 + 0}  ind(J' - \la (G' + F),0) = -1; \quad  \underline{i}^{-} =  \lowlim_{\la \rightarrow \la_1 - 0}  ind(J' - \la (G' + F),0) = 1.
\end{align*}
Thus, by Theorem \ref{Bifurcation}, $\la_1$ is a bifurcation point of \eqref{Steklov pertub}.
\end{proof}

The following lemma is proved as a part of \cite[Theorem 1.3]{Rabinowitz}.
%A similar result is obtained in \cite[Lemma 2.15]{CR}.

\begin{Lemma}\label{ls and p}
Let $ r,g$ and $f$  be given as in Theorem \ref{bifur}. For $\la \in \mathbb{R},$ define
$$ r(\la) = \inf \left\{ \norm{\phi}_{\wp} > 0:  (J' - \la (G' + F))(\phi) = 0 \right\}.$$
Then $r$ is lower semicontinuous. Further more, if $\la$ is not an eigenvalue of   \eqref{Steklov weight}, then $r(\la)> 0$.

\end{Lemma}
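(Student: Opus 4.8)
The plan is to prove the two assertions separately: first, lower semicontinuity of the function $r(\la)$, and second, the strict positivity of $r(\la)$ when $\la$ is not an eigenvalue of \eqref{Steklov weight}. For the lower semicontinuity, I would fix $\la_0 \in \R$ and show that $\liminf_{\la \to \la_0} r(\la) \ge r(\la_0)$. Take a sequence $\la_n \to \la_0$ achieving the liminf, so that $r(\la_n) \to \ell := \liminf_{\la \to \la_0} r(\la)$. If $\ell = +\infty$ there is nothing to prove (the infimum over an empty set is $+\infty$), so assume $\ell < \infty$. By definition of the infimum, for each $n$ there is a nontrivial solution $\phi_n$ with $(J' - \la_n(G' + F))(\phi_n) = 0$ and $\norm{\phi_n}_{\wp} \le r(\la_n) + \tfrac1n$. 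The sequence $(\phi_n)$ is then bounded in $\wp$, so along a subsequence $\phi_{n_k} \wra \phi$ in $\wp$. The crucial step is to pass to the limit in the equation: pairing the equation against $\phi_{n_k} - \phi$ gives
\begin{align*}
\big< (J' - \la_{n_k} G')(\phi_{n_k}), \phi_{n_k} - \phi \big> = \la_{n_k} \big< F(\phi_{n_k}), \phi_{n_k} - \phi \big>,
\end{align*}
and since $F$ and $G'$ are compact (Proposition \ref{compactmap1}, Proposition \ref{G' cpt}) the right-hand side together with the $G'$-term tends to $0$, so $\limsup_k \big< J'(\phi_{n_k}), \phi_{n_k} - \phi \big> \le 0$; since $J'$ is of class $\al(\wp)$ (Proposition \ref{class alpha}), we get $\phi_{n_k} \to \phi$ strongly in $\wp$. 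Then by continuity (demicontinuity suffices) of $J', G', F$ we may pass to the limit in $(J' - \la_{n_k}(G'+F))(\phi_{n_k}) = 0$ to obtain $(J' - \la_0(G'+F))(\phi) = 0$, and $\norm{\phi}_{\wp} = \lim_k \norm{\phi_{n_k}}_{\wp} = \ell$.

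The only subtlety here is that a priori $\phi$ could be the trivial solution $\phi \equiv 0$, in which case $\phi$ is not admissible in the infimum defining $r(\la_0)$, but then $\ell = \norm{\phi}_{\wp} = 0$ and the inequality $\ell \ge r(\la_0)$ holds trivially since $r(\la_0) \ge 0$ (indeed, if $\ell = 0$ the conclusion $\ell \ge r(\la_0)$ forces $r(\la_0) = 0$; this is consistent — it simply means $\la_0$ could then be a bifurcation point). If $\phi \not\equiv 0$, then $\phi$ is an admissible competitor in the infimum defining $r(\la_0)$, so $r(\la_0) \le \norm{\phi}_{\wp} = \ell$. Either way $r(\la_0) \le \ell = \liminf_{\la \to \la_0} r(\la)$, which is lower semicontinuity.

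For the second assertion, suppose $\la$ is not an eigenvalue of \eqref{Steklov weight} but $r(\la) = 0$. Then there exists a sequence $\phi_n \not\equiv 0$ of solutions of \eqref{Steklov pertub} with $\norm{\phi_n}_{\wp} \to 0$. Normalize $v_n = \phi_n \norm{\phi_n}_{\wp}^{-1}$, so $\norm{v_n}_{\wp} = 1$; along a subsequence $v_{n_k} \wra v$ in $\wp$. Dividing the equation $J'(\phi_n) - \la G'(\phi_n) - \la F(\phi_n) = 0$ by $\norm{\phi_n}_{\wp}^{p-1}$ and using the $(p-1)$-homogeneity of $J'$ and $G'$ gives $J'(v_n) - \la G'(v_n) = \la \norm{\phi_n}_{\wp}^{-(p-1)} F(\phi_n)$, whose right-hand side tends to $0$ in $(\wp)'$ by Proposition \ref{Growth}. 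Pairing against $v_{n_k} - v$ and using the compactness of $G'$ yields $\limsup_k \big< J'(v_{n_k}), v_{n_k} - v \big> \le 0$, so by the $\al(\wp)$ property of $J'$ we get $v_{n_k} \to v$ strongly with $\norm{v}_{\wp} = 1$. Passing to the limit in the rescaled equation gives $J'(v) - \la G'(v) = 0$ with $v \not\equiv 0$, i.e. $v$ is an eigenfunction of \eqref{Steklov weight} with eigenvalue $\la$ — contradicting the hypothesis that $\la$ is not an eigenvalue. Hence $r(\la) > 0$. The main obstacle throughout is the passage to the limit in the nonlinear operator equation, which is handled exactly by the $\al(\wp)$ (i.e. $(S_+)$-type) property of $J'$ combined with the compactness of the lower-order terms $G'$ and $F$; this is the same mechanism already used in Lemma \ref{homotopy invariance}, so the argument is a routine adaptation.
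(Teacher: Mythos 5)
Your overall strategy is the same as the paper's: pass to the limit in the operator equation using the class-$\al$ property of $J'$ together with compactness of $G'$ and $F$, obtain a strong limit $\phi$ solving $(J'-\la_0(G'+F))(\phi)=0$ with $\norm{\phi}_{\wp}=\ell$, and then separately prove positivity of $r$ at non-eigenvalues by the rescaling $v_n=\phi_n\norm{\phi_n}_{\wp}^{-1}$. The limit-passing step and the whole positivity argument are fine.

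There is, however, a genuine logical error in your treatment of the case $\phi\equiv 0$. You write that when $\ell=0$ ``the inequality $\ell\ge r(\la_0)$ holds trivially since $r(\la_0)\ge 0$.'' That is backwards: $\ell=0$ and $r(\la_0)\ge 0$ give $r(\la_0)\ge\ell$, i.e.\ the \emph{opposite} of what lower semicontinuity requires. To conclude $r(\la_0)\le\ell$ here you would need to \emph{prove} that $r(\la_0)=0$, that is, that the problem at the exact parameter $\la_0$ (not merely at nearby $\la_{n_k}$) admits nontrivial solutions of arbitrarily small norm; this does not follow from the fact that $\la_0$ is a bifurcation point, and your parenthetical ``this is consistent'' is not an argument. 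The paper closes this case differently: if $\phi\equiv 0$, set $v_n=\phi_n\norm{\phi_n}_{\wp}^{-1}$, extract a weak limit $v$, and run exactly the argument you already used for the positivity part to conclude that $v$ is a normalized eigenfunction of \eqref{Steklov weight} at $\la_0$; this is a contradiction whenever $\la_0$ is not an eigenvalue of \eqref{Steklov weight}, which covers every point where lower semicontinuity of $r$ is actually invoked in the proof of Theorem~\ref{bifur}. So the fix is to reuse the rescaling argument you already wrote for the second assertion, rather than appealing to a ``trivial'' inequality that points the wrong way.

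Two minor points: (a) you should justify $\lim_k\norm{\phi_{n_k}}_{\wp}=\ell$ by noting the squeeze $r(\la_{n_k})\le\norm{\phi_{n_k}}_{\wp}\le r(\la_{n_k})+\frac{1}{n_k}$, since the $\phi_{n_k}$ are admissible competitors; (b) when dividing the perturbed equation by $\norm{\phi_n}_{\wp}^{p-1}$ in the positivity argument you invoke $(p-1)$-homogeneity of $J'$ and $G'$, which is correct, but it is worth stating explicitly that Proposition~\ref{Growth} is what kills the term $\la\norm{\phi_n}_{\wp}^{-(p-1)}F(\phi_n)$ — you do say this, so that part is fine.
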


\begin{proof}
 {\it $r$ is lower semicontinuous}:  Let $(\la_n)$ be a sequence in $\R^+$ such that $\la_n \rightarrow \la$. Without loss of generality we assume that $r(\la_n)$ is finite.  Now by definition of $r$, there exists $\phi_{n} \in \wp \setminus \{ 0 \}$ such that $\norm{\phi_{n}}_{\wp} < r(\la_{n}) + \frac{1}{n}$ and $(J' - \la_{{n}} (G' + F))(\phi_{n}) = 0.$ Since $(\phi_{n})$ is bounded, up to a subsequence $\phi_{n} \rightharpoonup \phi$ in $\wp$. Now by writing
\begin{align*}\label{p1}
     (J' - \la (G' + F))(\phi_n) = (J' - \la_n (G' + F))(\phi_{n}) + (\la_{n} - \la )(G' + F)(\phi_{n}),
\end{align*}
we observe that $\lim_{n \rightarrow \infty} \big< (J' - \la (G' + F))(\phi_{n}), \phi_{n} - \phi  \big> = 0$. As $J' - \la (G' + F)$ is of class $\al(\wp)$ (Proposition \ref{class alpha 1}), we get $\phi_{{n}} \rightarrow \phi$ in $\wp$. Therefore,
\begin{equation}\label{eq:class}
    (J' - \la (G' + F))(\phi) = 0
\end{equation}
We claim that $\phi\neq 0$. If not, then $\norm{\phi_{n}}_{\wp} \rightarrow 0,$ as $n \rightarrow \infty.$ Set $v_n = \phi_n {\norm{\phi_n}^{-1}_{\wp}}$. Then $v_n \rightharpoonup v$ in $\wp$ and (by the similar arguments as in the proof of Lemma \ref{homotopy invariance}) $v$ must be an eigenfunction corresponding to $\la.$
A contradiction and hence $\phi\neq 0.$ Thus, 
\begin{align*}
    r(\la) \le \norm{\phi}_{\wp} = \lim_{n \rightarrow \infty} \norm{\phi_n}_{\wp} \le \lim_{n \rightarrow \infty} \left( r(\la_n) + \frac{1}{n} \right)  = \lim_{n \rightarrow \infty} r(\la_n).
\end{align*}
{\it $r$ is positive}: Suppose $r(\la) = 0$ for some $\la$.  Then there exists a sequence $(\phi_n) \in \wp \setminus \{ 0 \}$ such that $\norm{\phi_n}_{\wp} < \frac{1}{n}$ and $(J' - \la (G' + F))(\phi_n) = 0.$ Set $v_n = \phi_n {\norm{\phi_n}^{-1}_{\wp}}.$ Then  $\norm{v_n}_{\wp} = 1$ and $v_n \rightharpoonup v$ in $\wp.$ Now using the similar arguments as in Lemma \ref{homotopy invariance}, we obtain 
\begin{align*}
    J'(v) - \la G'(v) = 0, \ \ \text{where} \; \norm{v}_{\wp} = 1.
\end{align*}
Thus $\la$ must be an eigenvalue of \eqref{Steklov weight}. Therefore, $r(\la)>0$, if $\la$ is not an eigenvalue of \eqref{Steklov weight}.
\end{proof}

\begin{remark}
If $(\la, 0)$ is a bifurcation point of \eqref{Steklov pertub}, then $r(\la) =  0$ and hence from Lemma \ref{ls and p}, $\la$ must be an eigenvalue of \eqref{Steklov weight}. Thus for the existence of a bifurcation point $(\la, 0)$ of \eqref{Steklov pertub}, it is necessary that $\la$ is an eigenvalue of \eqref{Steklov weight}.  
\end{remark}

In the next proposition we prove a generalized homotopy invariance property for the maps $J' - \la(G' + F).$ A similar result for  Leray-Schauder degree is obtained in \cite{Leray}. For a  set $U$ in $[a,b] \times \wp$, let $U_{\la}= \left\{ \phi \in \wp: (\la, \phi) \in U \right\}$ and $\pa U_{\la} = \left\{ \phi \in \wp: (\la, \phi) \in \pa U \right\}$. 

\begin{proposition}\label{gen homotopy}
Let  $U$ be a bounded  open set in  $[a,b] \times \wp$. If  $(J' - \la(G' + F))(\phi) \neq 0$ for every $\phi \in \pa U_{\la},$ then $deg(J' - \la(G' + F), U_{\la}, 0) = C,$  $\forall\, \la \in [a,b].$
\end{proposition}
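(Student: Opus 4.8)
The plan is to mimic the classical proof of homotopy invariance for the Leray–Schauder degree, now in the setting of the degree for class-$\al$ maps developed in Section 2.4. First I would fix the bounded open set $U\subset[a,b]\times\wp$ and consider the one-parameter family $\Phi_\la := J' - \la(G'+F)$. By Proposition \ref{class alpha 1}, each $\Phi_\la$ is bounded, demicontinuous and of class $\al(\wp)$, so for each $\la$ the degree $deg(\Phi_\la, \overline{U_\la}, 0)$ is well defined precisely because the hypothesis says $\Phi_\la(\phi)\neq0$ on $\pa U_\la$. The goal is to show this integer is locally constant in $\la$, hence constant on the interval $[a,b]$.

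The key step is to verify that $\{\Phi_\la\}$ furnishes an admissible homotopy in the sense of the Definition in Section 2.4: namely (a) for each $\la\in[a,b]$, $\Phi_\la\in A(U_\la,\pa U_\la)$ with $\Phi_\la$ nonvanishing on $\pa U_\la$ — this is exactly the hypothesis plus Proposition \ref{class alpha 1}; and (b) the continuity-type condition that if $\la_n\to\la$ in $[a,b]$ and $\phi_n\to\phi_0$ (strongly) in $\overline{U_\la}$ (or the appropriate closure), then $\Phi_{\la_n}(\phi_n)\wra \Phi_\la(\phi_0)$. For (b), I would write
\begin{align*}
\Phi_{\la_n}(\phi_n) - \Phi_\la(\phi_0) = \big(J'(\phi_n)-J'(\phi_0)\big) - \la_n\big((G'+F)(\phi_n)-(G'+F)(\phi_0)\big) - (\la_n-\la)(G'+F)(\phi_0),
\end{align*}
and then use the continuity of $J'$ (Proposition \ref{class alpha}), the continuity of $G'$ (Proposition \ref{G' cpt}) and of $F$ (Proposition \ref{compactmap1}), together with $\la_n\to\la$, to conclude the right-hand side tends to $0$ in norm, which is stronger than weak convergence. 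Thus $\{\Phi_\la\}$ is a homotopy connecting $\Phi_a$ and $\Phi_b$ on any subinterval, and an analogous argument applies on $[\la', \la'']\subset[a,b]$.

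Then I would invoke the homotopy invariance of the degree, Proposition \ref{degree}(i), which asserts that $deg(H_t, \overline{D}, 0)$ is constant in $t$ when $H_t$ is an admissible homotopy and the $D$-set is fixed. The one technical wrinkle is that here the domain slice $U_\la$ varies with $\la$, whereas Proposition \ref{degree}(i) is stated for a fixed domain $D$. To handle this I would pass to the cylinder: since $U$ is open and bounded in $[a,b]\times\wp$, for each $\la_0$ there is a neighborhood $(\la_0-\de,\la_0+\de)$ and a fixed bounded open $V\subset\wp$ with $U_\la\subset V$ for $\la$ near $\la_0$; by excision of the degree (which follows from the additivity property, Proposition \ref{degree}(iii), applied to the fact that $\Phi_\la$ has no zeros in $V\setminus U_\la$ near $\pa U_\la$) one reduces to a fixed domain and applies Proposition \ref{degree}(i) to get local constancy of $\la\mapsto deg(\Phi_\la, \overline{U_\la},0)$. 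Connectedness of $[a,b]$ then gives the global constant $C$.

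The main obstacle I anticipate is precisely this reduction from a $\la$-dependent slice $U_\la$ to the fixed-domain hypothesis of the available degree theorems: one must argue carefully that the zeros of $\Phi_\la$ stay in the interior of $U_\la$ uniformly for $\la$ in a small interval (a compactness-and-contradiction argument using the class-$\al$ property, exactly as in Lemma \ref{homotopy invariance}), and that the excision/additivity of this degree is available in the form needed. Everything else — admissibility of the homotopy, the norm-continuity estimate displayed above — is routine given the continuity and compactness results already established in Section 3.
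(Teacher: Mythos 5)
Your high-level plan (locally constant degree plus connectedness, verified via the admissible-homotopy definition and an excision argument) matches the paper's strategy, and your check of condition (b) of the homotopy definition is fine, since norm convergence of $\Phi_{\la_n}(\phi_n)$ to $\Phi_\la(\phi_0)$ certainly gives the required weak convergence. But the crucial excision step is stated in the wrong direction, and as written it does not go through. You propose to take a fixed bounded open $V$ with $U_\la\subset V$ for $\la$ near $\la_0$ and then excise $V\setminus U_\la$, ``using the fact that $\Phi_\la$ has no zeros in $V\setminus U_\la$.'' That fact is not available: the hypothesis only tells you that $\Phi_\la$ does not vanish on $\pa U_\la$, and gives no information whatsoever about $\Phi_\la$ on $V\setminus\overline{U_\la}$. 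Nor do you know that $\Phi_\la$ is nonvanishing on $\pa V$, so $deg(\Phi_\la, V, 0)$ may not even be defined. Enlarging the slice is the wrong move.

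The correct reduction, as in the paper, goes inward. Set $N_{\la_0}=\{\phi\in U_{\la_0}:\Phi_{\la_0}(\phi)=0\}$, which is (sequentially) compact by the class-$\al$ property and demicontinuity, and choose an open $V_0$ with $N_{\la_0}\subset V_0\subset\overline{V_0}\subset U_{\la_0}$ together with a neighbourhood $I_0$ of $\la_0$ such that $I_0\times V_0\subset U$ — this cylinder exists because $U$ is open, not merely because it is bounded. The compactness-and-contradiction argument (exactly the one you allude to, as in Lemma~\ref{homotopy invariance}, exploiting that $J'-\la(G'+F)$ is of class $\al(\wp)$ and that $G'+F$ is compact) shrinks $I_0$ to $I_1\ni\la_0$ so that $N_\la\subset V_0$ for all $\la\in I_1$. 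Now you are genuinely on the fixed domain $V_0$: $H(\la,\cdot)=\Phi_\la$ is an admissible homotopy on $I_1\times V_0$ with $H(\la,\cdot)\neq0$ on $\pa V_0$, so Proposition~\ref{degree}(i) gives $deg(\Phi_\la,V_0,0)=C$ on $I_1$; and since $\Phi_\la$ has no zeros in $U_\la\setminus V_0$ for $\la\in I_1$ (that is where the compactness claim is actually used), additivity gives $deg(\Phi_\la,U_\la,0)=deg(\Phi_\la,V_0,0)=C$ there. Connectedness of $[a,b]$ finishes. So your outline has the right tools, but the containment $U_\la\subset V$ must be replaced by $V_0\subset U_{\la_0}$ with $N_{\la_0}\subset V_0$, and the unjustified ``no zeros in $V\setminus U_\la$'' must be replaced by the proved statement ``no zeros in $U_\la\setminus V_0$ for $\la$ near $\la_0$.''
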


\begin{proof}
It is enough to show that $deg(J' - \la(G' + F), U_{\la}, 0)$ is locally constant on $[a,b].$ Then the proof will follow from the connectedness of $[a,b]$ and the continuity of the degree. For each $\la\in [a,b],$
consider the set $N_{\la} = \left\{ \phi \in U_{\la} : (J' - \la(G' + F))(\phi) = 0 \right\}.$
For $\la_0 \in [a,b]$, let  $I_0 \subset [a,b]$ be a neighbourhood of $\la_0$ and let $V_0$ be an open set such that $N_{\la_0} \subset V_0 \subset \overline{V_{0}} \subset U_{\la_0}$ and $I_0 \times V_0 \subset U.$  We claim that there exists $$I_1 \subset I_0 \text{ such that } \la_0 \in I_1 \text{ and   } N_{\la} \subset V_0, \; \forall \la \in I_1.$$ If  not, then there exists a sequence $(\la_n,\phi_n)$ in $U$ such that $\phi_n \in N_{\la_n} \setminus V_0$ and $\la_n\ra \la_0.$ As $(\phi_n)$ is bounded in $\wp$, $\phi_n \rightharpoonup \phi$ for some  $\phi\in\wp$. Now following the steps that yield \eqref{eq:class},  we get  $\phi_n\ra \phi$ in $\wp$ and      
$(J' - \la_0 (G' + F))(\phi) = 0.$ Since  $\phi\in \overline{U_\la}$ and $J' - \la_0 (G' + F)$ is not vanishing on $\partial U_\la,$ we conclude $\phi \in U_\la$. Thus $\phi\in N_{\la_0},$  a contradiction since $\phi\not\in  V_0.$ Therefore, our claim must be true. Now consider the homotopy, $H : I_1 \times V_0 \rightarrow (\wp)'$  defined as $H(\la, \phi) = (J' - \la (G' + F))(\phi).$ By construction, for every $\la\in I_1,$ $H(\la,.)$  does not vanish on $\pa V_0.$ Thus by the classical homotopy invariance of degree (Proposition \ref{degree}),
$deg(H(\la, \cdot), V_0, 0) = C, \; \forall\, \la \in I_1.$ Since $H(\la, \phi) \neq 0$ in $U_{\la} \setminus V_0$, by the additivity of degree, we obtain  $deg(H(\la, \cdot), U_{\la}, 0) =  C, \; \forall\, \la \in I_1.$ 
\end{proof}

\noi{\textbf {Proof of Theorem \ref{bifur}}}: We adapt the technique used in the proof of \cite[Theorem 1.3]{Rabinowitz}.  Recall that  $\S \subset \R \times \wp$ is the set of all nontrivial solutions of $(J' - \la (G' + F))(\phi) = 0.$
Suppose there does not exist any continuum $\C \subset \S$ such that $(\la_1, 0) \in \C$ and  $\C$ is either unbounded, or  meets at $(\la, 0)$ where $\la$ is an eigenvalue of \eqref{Steklov weight} and $\la \neq \la_1$. Then by \cite[Lemma 1.2]{Rabinowitz},  there exists a bounded  open set $U \subset \R \times \wp$ containing $(\la_1,0)$  such that $\pa U \cap \S = \emptyset $ and  $\overline{U}\cap  \R \times\{0\}  =\overline{I}\times \{0\},$ where $I=(\la_1 - \de, \la_1 + \de)$ with $0<\de< \min\{\la_1,\la_2-\la_1\}.$ Thus $(\la \times \pa U_{\la}) \cap \S = \emptyset$ for every $\la\in \R$ and  $(\la, 0) \not \in \pa U$ for  $\la \in I$. In particular, $J' - \la (G' + F)$ does not vanish on $\partial U_\la$ for every $\la$ in $ I.$ Hence $deg((J' - \la (G' + F), U_{\la}, 0)$ is well defined and by homotopy invariance of degree (Proposition \ref{gen homotopy}), we  have
\begin{align}\label{d1}
    deg(J' - \la (G' + F), U_{\la}, 0) =  C, \ \ \text{for} \; \la \in I.
\end{align}
Next we compute $ind(J' - \la (G' + F),0)$ for $\la\in I$. Let $$d:=\text{dist}((-\infty,0]\cup [\la_2,\infty),\overline{U}).$$
 Since $\overline{U}\cap {\R\times\{0\}}=\overline{I}\times \{0\}$, we  observe that $d>0.$ Now set $$\rho(\la)=\left\{\begin{array}{ll}
   \frac{d}{2}, & \quad \text{ for } \la\in (-\infty,0]\cup [\la_2,\infty),\\
   \min\{1,\frac{1}{2}r(\la)\},  &  \quad \text{ for } \la\in (0,\la_2)\setminus\{\la_1\}. 
  \end{array}\right. 
$$
Thus using \ref{ls and p} we easily conclude that $\rho(\la)>0$ for each $\la\ne \la_1$ and $\overline{B_{\rho(\la)}} \setminus \{ 0 \}$ does not contain any solution of $J' - \la (G' + F).$   
Let $$I^* := \left\{\la: (\la,\phi)\in U \text{ for some } \phi\right\},\quad  \la^*:=\sup\{\la:\la\in I^*\},\quad  \la_*:=\inf\{\la:\la\in I^*\}$$ 
 For $\la\in (\la_1,\la^*],$ let $\rho = \inf \left\{ \rho(\mu): \mu \in [\la, \la^*] \right\}.$ By Lemma \eqref{ls and p}, we have $\rho > 0.$ Now consider the set $V= U \setminus [\la, \la^*] \times \overline{B_{\rho}}$. Observe that, $V$ is bounded and open in $[\la, \la^*] \times \wp$. Further more,  for each $\mu \in [\la, \la^*]$, $V_\mu=U_{\mu} \setminus \overline{B_{\rho}}$ and $(J' - \mu (G' + F))$ does not vanish on  $\pa V_\mu=\pa(U_{\mu} \setminus \overline{B_{\rho}})$. Therefore, by the homotopy invariance of degree (Proposition \ref{gen homotopy}) and noting that  $U_{\la^*} = \emptyset,$ we get 
$$ deg(J' - \la (G' + F), U_{\la} \setminus \overline{B_{\rho}}, 0) =  deg(J' - \mu (G' + F), U_{\la^*} \setminus \overline{B_{\rho}}, 0)=0.$$ 
Similarly,  for $\la \in [\la_*, \la_1)$ we  get $deg(J' - \la (G' + F), U_{\la} \setminus \overline{B_{\rho}}, 0) = 0$.
 Since $(J' - \la (G' + F))(\phi) \neq 0$ for $\phi \in B_{\rho(\la)} \setminus \overline{B_{\rho}},$ by the additivity of the degree we get $$deg(J' - \la (G' + F), U_{\la} \setminus \overline{B_{\rho(\la)}}, 0) = 0,\quad  \la \in [\la_*,\la^*]\setminus\{\la_1\}.$$ 
Again using the additivity of the degree, we conclude that
\begin{align*}
    deg(J' - \la (G' + F), U_{\la}, 0) = deg(J' - \la (G' + F), B_{\rho(\la)}, 0), \quad \forall \la \in I \setminus \{\la_1\}.
\end{align*}
Thus from \eqref{d1} we obtain
\begin{align*}
    ind(J' - \la(G' + F), 0) =  C, \quad \text{for} \; \la \in I\setminus \{\la_1\}.
\end{align*}
A contradiction to \eqref{index difference}. Thus there must exist  a continuous branch of non-trivial  solutions  from $(\la_1, 0)$ and is either  unbounded, or  meets at $(\la, 0)$ where $\la$ is an eigenvalue of \eqref{Steklov weight}.                                    \qed

\bibliographystyle{plainurl}
\bibliography{Steklov}

\noi {\bf  T. V.  Anoop } \\  Department of Mathematics, \\   Indian Institute of Technology Madras, \\ Chennai, 600036, India. \\ 
{\it Email}:{ anoop@iitm.ac.in}  \vspace{0.5 cm} \\
\noi {\bf  Nirjan Biswas } \\  Department of Mathematics, \\   Indian Institute of Technology Madras, \\ Chennai, 600036, India. \\ {\it Email}:{ nirjaniitm@gmail.com}

\end{document}